\documentclass[twoside, 14pt]{article}
\usepackage{amsmath,amssymb,amsthm,mathrsfs}
\usepackage[margin=2.4cm]{geometry} 
\usepackage{lipsum}
\usepackage{titlesec,hyperref}
\usepackage{fancyhdr}
\usepackage[numbers,sort&compress]{natbib}
\usepackage{color}
\usepackage[titletoc]{appendix}

\fancypagestyle{plain}
{
	\fancyhf{}
	
}

\titleformat{\subsection}{\it}{\thesubsection.\enspace}{1.5pt}{}
\titleformat{\subsubsection}{\it}{\thesubsubsection.\enspace}{1.5pt}{}

\newtheorem{theo}{Theorem}[section]
\newtheorem{lemm}[theo]{Lemma}

\newtheorem{prop}[theo]{Proposition}
\newtheorem{rema}{Remark}[section]
\numberwithin{equation}{section}

\allowdisplaybreaks

\def\beq{\begin{equation}}
	\def\bal{\begin{aligned}}
		\def\dal{\end{aligned}}
	\def\deq{\end{equation}}
\def\beqq{\begin{equation*}}
	\def\deqq{\end{equation*}}

\def\p{\partial}
\def\ka{\kappa}

\def\al{\alpha}
\def \f{\frac}

\def \ka{\kappa}
\def \i {\int_{\mathbb{R}^3}}

\def \ah{\alpha_h}
\def \bh{\beta_h}
\def\me{\mathcal{E}}
\def\md{\mathcal{D}}

\def \du{{\rm{div}} u}

\def \vro {\varrho}

\begin{document}
	\begin{sloppypar}
		\title{Nonlinear stability of a background magnetic field for the 3D compressible MHD equations with anisotropic dissipation}
		\author{$\mbox{Jincheng Gao}^1$ \footnote{Corresponding author. Email: gaojch5@mail.sysu.edu.cn}, \quad
        	$\mbox{Xianpeng Hu}^2$ \footnote{Email: xianphu@polyu.edu.hk}, \quad
			$\mbox{Lianyun Peng}^2$ \footnote{Email: lianyun.peng@polyu.edu.hk}, \quad
			$\mbox{Jiahong Wu}^3$ \footnote{Email: jwu29@nd.edu}, \\
			\quad
			$^1\mbox{School}$ of Mathematics, Sun Yat-sen University,\\
			Guangzhou 510275, China\\
			$^2\mbox{Department}$ of Applied Mathematics, 
			The Hong Kong Polytechnic University,\\
			Hong Kong 999077, China\\
			$^3\mbox{Department}$ of Mathematics,
			University of Notre Dame, \\
			Notre Dame 46556, USA\\
		}
		
		\date{}
		
		\maketitle

        	\begin{abstract}
			{We study the nonlinear stability of an equilibrium with a background magnetic field for the three-dimensional compressible magnetohydrodynamic (MHD) equations in the whole space $\mathbb{R}^3$, in the strongly anisotropic regime where the velocity is dissipated only in the horizontal directions and the magnetic field is diffused in a single direction. We prove that for initial data sufficiently close to the equilibrium in a Sobolev space, the system admits a unique global-in-time solution that remains close to the equilibrium and enjoys quantitative dissipation estimates. The proof overcomes the severe lack of dissipation through two mechanisms: the background magnetic field is shown to generate enhanced dissipation for the magnetic field and the density, while a nonlinear cancellation mechanism is devised to resolve the loss of vertical derivatives caused by the compressible coupling.}
			
			\vspace*{5pt}
			\noindent{\it Keywords}: compressible MHD equations; anisotropic dissipation; background magnetic field; global stability; enhanced dissipation; nonlinear cancellation mechanism.

			\vspace*{3pt}
			\noindent{\it 2020 Mathematics Subject Classification}: 35Q35; 76W05; 35B35; 76N10.
		\end{abstract}

		\tableofcontents

		\section{Introduction}

Magnetohydrodynamics (MHD) governs the motion of electrically conducting fluids such as plasmas, liquid metals, and many astrophysical and geophysical flows, in which the velocity field and the magnetic field evolve through a strong, two-way nonlinear coupling: the fluid advects and stretches the magnetic field, while the field exerts a Lorentz force back on the fluid. For compressible fluids, the resulting model couples the compressible Navier--Stokes equations with the equations of magnetic induction, and it occupies a central place in plasma physics, astrophysics, and geophysical fluid dynamics \cite{D1993, Braginskii1965, D2001, Kulsrud2005}. A question of fundamental importance, both physically and mathematically, is the stability of equilibria. When a quiescent, magnetized fluid is slightly perturbed, does it relax back toward rest, or do the perturbations amplify? It is in this regime that the stabilizing role of an ambient magnetic field can be isolated and quantified rigorously.

A distinctive feature of many physically relevant MHD flows is that the dissipation is strongly anisotropic. In a strongly magnetized plasma, charged particles spiral tightly around the magnetic field lines, so that transport of momentum and diffusion of the field proceed almost freely along the field but are sharply suppressed across it; this directional dependence is built into the classical transport theory for magnetized fluids \cite{Braginskii1965, Kulsrud2005}. A parallel horizontal--vertical anisotropy is ubiquitous in rotating and stratified geophysical and astrophysical flows, where stable stratification and rotation organize the motion into nearly horizontal layers and inhibit vertical exchange. Such considerations make it natural to study compressible MHD models in which dissipation acts only along certain directions, and to ask whether so degenerate a dissipative structure can still produce stability. The analytical difficulty of these problems lies precisely in the missing directions of dissipation, where one must uncover hidden stabilizing mechanisms, typically generated by the equilibrium magnetic field through the velocity--field coupling, to compensate.

The present paper is concerned with one such model. We study the following anisotropic compressible MHD system in $\mathbb{R}^3$,
		\begin{equation}\label{eqr0}
			\left\{
            \begin{aligned}
				&\p_t \rho+{\rm div}(\rho u)=0,\\
				&\rho \p_t u+\rho u \cdot \nabla u-\p_{1}^2u-\p_{2}^2u
				-\nabla {\rm div}u+\nabla P(\rho)
				=B\cdot \nabla B-\frac{1}{2}\nabla |B|^2,\\
				&\partial_t B + u \cdot \nabla B
				-\p_{1}^2 B
				= B\cdot \nabla u-B {\rm div}u,\\
				&{\rm div} B= 0.
			\end{aligned}
            \right.
		\end{equation}
		Here $\rho$, $u$ and $B$ represent the density, velocity
		and magnetic field, respectively, and $x=(x_1,x_2,x_3)$.
		The dissipative structure of \eqref{eqr0} is degenerate: the velocity is dissipated only in the two horizontal directions $x_1, x_2$, while the magnetic field is diffused in the single direction $x_1$. There is no viscous dissipation of $u$ in $x_3$, no magnetic diffusion of $B$ in the $x_2$ or $x_3$ directions, and, as always for compressible flow, no dissipation of the density in any direction.
		For sake of simplicity, we assume the pressure satisfying the law
		\beqq
		P(\rho)=\frac{\rho^3}{3}.
		\deqq
        The qualitative conclusions of this paper extend to the general
$\gamma$-law $P(\rho) = \rho^\gamma/\gamma$ ($\gamma > 1$) with
minor modifications.
		System \eqref{eqr0} admits the physically important steady state
		\beqq
		\rho^* \equiv 1, \quad u^* \equiv 0,
		\quad B^* \equiv e_2:= (0,1,0).
		\deqq
Then, we introduce the perturbation variables
\[
\varrho:=\rho-1,
\qquad 
b:=B-e_2 .
\]
Substituting
\[
\rho=1+\varrho,
\qquad 
B=e_2+b
\]
into \eqref{eqr0}, we obtain the following system governing the perturbation $(\varrho, u, b)$:
		\begin{equation}\label{eqr}
			\left\{\begin{aligned}
				&\p_t \vro+{\rm div}u=F_1,\\
				&\p_t u-\p_{1}^2u-\p_{2}^2u-\nabla {\rm div}u
				+\nabla \vro+\nabla b_2-\p_2 b=F_2,\\
				&\p_t b-\p_{1}^2 b
				+e_2{\rm div}u-\p_2 u=F_3,\\
				&{\rm div}b=0,
			\end{aligned}\right.
		\end{equation}	
		with the initial data
		\beq\label{eqr-i}
		(\vro, u, b)|_{t=0}=(\vro_0, u_0, b_0).
		\deq
		Here the force terms $F_i(i=1,2,3)$ are defined by
		\begin{equation}\label{nonlinear}
			\left\{
            \begin{aligned}
				&F_1:=-u\cdot \nabla \vro-\vro \,{\rm div}u,\\
				&F_2:=-u \cdot \nabla u-\vro\nabla  \vro-\frac{\vro}{1+\vro}(\p_{1}^2u+\p_{2}^2u+\nabla {\rm div}u -\nabla b_2+\p_2 b)\\
				&\quad \quad
				+\frac{1}{1+\vro}b\cdot \nabla b
				-\frac{1}{2(1+\vro)}\nabla |b|^2,\\
				&F_3:=-u\cdot \nabla b-b\,{\rm div}u+b\cdot \nabla u.
			\end{aligned}
            \right.
		\end{equation}	
		The aim of this paper is to prove that the equilibrium $(\rho^*,u^*,B^*)=(1,0,e_2)$ is nonlinearly stable: for initial data that are a sufficiently small perturbation in a Sobolev space, the system \eqref{eqr} admits a unique global-in-time solution that stays close to the equilibrium and enjoys quantitative dissipation estimates. The challenge is that the dissipation built into \eqref{eqr} is far too weak, by itself, to control the nonlinear interactions: there is no dissipation for the velocity in $x_3$, none for the magnetic field in $x_2$ and $x_3$, and none for the density at all. Whether such a degenerate structure can be stabilized is not at all obvious, and resolving it requires extracting hidden dissipation from the ambient magnetic field $e_2$ and devising a mechanism to absorb a genuine loss of vertical derivatives. Our main result, stated next, gives an affirmative answer.

		\begin{theo}\label{main_result}
			Let $m \ge 3$ be an integer. Assume the initial data $(\vro_0,u_0, b_0)$ satisfying
			${\rm div}b_0=0$ and there exists a small positive constant $\delta_0$
			such that
			\begin{equation}\label{condition}
				\|(\vro_0, u_0, b_0)\|_{H^m}^2 \le \delta_0,
			\end{equation}
			then the equations \eqref{eqr}-\eqref{eqr-i} have a unique global solution $(\vro, u, b)$ satisfying
			\begin{equation}\label{global_estimate}
				\|(\vro, u, b)(t)\|_{H^m}^2
 +\int_0^t (\|(\p_1 u, \p_2 u, \mathrm{div} u, \p_1  b)(\tau)\|_{H^m}^2 
		+ \|(\p_1 \vro, \p_2 \vro, \partial_2 b)(\tau)\|_{H^{m-1}}^2) 
        d\tau \le C\delta_0,
			\end{equation}
			where $C$ is a positive constant independent of time $t$.
		\end{theo}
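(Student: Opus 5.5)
Global existence follows from local well-posedness plus a continuity (bootstrap) argument, so the heart of the proof is a closed a priori estimate. I set
\[
\me(t):=\|(\vro,u,b)(t)\|_{H^m}^2,\qquad
\md(t):=\|(\p_1 u,\p_2 u,{\rm div}u,\p_1 b)\|_{H^m}^2+\|(\p_1\vro,\p_2\vro,\p_2 b)\|_{H^{m-1}}^2 .
\]
The goal is an energy functional $\widetilde{\me}\simeq\me$ satisfying
\[
\frac{d}{dt}\widetilde{\me}+c\,\md\le C\,\me^{1/2}\md ,
\]
at least after time integration. Under the bootstrap hypothesis $\sup_{\tau\le t}\me(\tau)\le 2C\delta_0$ with $\delta_0$ small, the right side is absorbed, which yields \eqref{global_estimate}. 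Uniqueness follows from a standard $L^2$ difference estimate.

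\textbf{Linear structure and hidden dissipation.} I first run the basic $H^m$ energy estimate on \eqref{eqr}: apply $\p^\al$ with $|\al|\le m$ and test against $(\p^\al\vro,\p^\al u,\p^\al b)$. The coupling terms $\nabla\vro/{\rm div}u$ and $(\nabla b_2-\p_2 b)/(e_2{\rm div}u-\p_2 u)$ cancel, which gives control of $\|(\p_1u,\p_2u,{\rm div}u,\p_1 b)\|_{H^m}^2$.

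The missing dissipation of $\vro$ and $\p_2 b$ I extract from the velocity equation through cross terms. Write $\p_2 b-\nabla b_2$ as the magnetic forcing. Testing $\p^\al$ of the $u$-equation against $-\p^\al\p_2 b$ at order $|\al|\le m-1$ produces $\|\p_2 b\|_{H^{m-1}}^2$. This uses $\p_t b$ from the $b$-equation, whose $\p_2 u$ term contributes $\|\p_2 u\|^2$ and is already controlled. In the same way, testing against $\p^\al\p_h\vro$ with $h=1,2$ produces $\|\p_h\vro\|_{H^{m-1}}^2$. The time derivatives are moved onto the cross term and become part of $\widetilde{\me}$ with a small weight $\eta$.

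The pressure and magnetic-pressure gradients $\nabla(\vro+b_2)$ mix, so I expect to test against $\p_h(\vro+b_2)$ and $\p_2 b$ jointly, using ${\rm div}\,b=0$ to recover $\p_h\vro$. The velocity terms $\p_t u$, $\Delta_h u$ and $\nabla{\rm div}u$ are then bounded by $\md$. One subtlety is that only $\p_h u$ and ${\rm div}u$ dissipate, not $\p_3 u$, so the term $\nabla{\rm div}u$ tested against $\p_2 b$ must be handled at order $m-1$ using ${\rm div}u\in H^m$.

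\textbf{Nonlinear terms and the main obstacle.} Most terms in $F_1,F_2,F_3$ are bounded by $\me^{1/2}\md$ using anisotropic product estimates, for example
\[
\|fg\|_{L^2}\lesssim \|f\|^{1/2}\|\p_1 f\|^{1/2}\|g\|^{1/2}\|\p_2 g\|^{1/2}\|\p_3 g\|^{1/2}\|\p_2\p_3 g\|^{1/2}\cdots,
\]
which place a dissipated derivative on each factor. The genuine obstacle is the top-order vertical derivatives, where no dissipation is available:

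\begin{enumerate}
\item terms such as $\p^\al(u_3\p_3\vro)\,\p^\al\vro$ and $\vro\,\p^\al{\rm div}u$;
\item the quasilinear viscous term $\frac{\vro}{1+\vro}\nabla{\rm div}u$ tested against $\p^\al u$ with $\al=\p_3^m$.
\end{enumerate}

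For the transport parts I integrate by parts, which leaves $\tfrac12{\rm div}u\,|\p^\al\vro|^2$. This is acceptable only because ${\rm div}u$ is dissipated in $H^m$, through $\|{\rm div}u\|_{L^\infty}\lesssim\md^{1/2}$, paired with $\me$. Commutators of the form $\p_3^m u_3\,\p_3\vro$ are the dangerous ones. I plan to rewrite $\p_3 u_3={\rm div}u-\p_hu_h$, so that $\p_3^m u_3$ is expressed through ${\rm div}u$ and $\p_h u$, both dissipated at order $m$.

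For the remaining loss, namely $\vro\,\p_3^m{\rm div}u$ in the density equation against $\p_3^m\vro$ coupled with $\vro\,\p_3^m\nabla\vro$ in the momentum equation, I would build the cancellation into the energy functional. This amounts to using the weighted energy $\i (1+\vro)|\p^\al u|^2+P'(1+\vro)(1+\vro)^{-1}|\p^\al\vro|^2$, i.e.\ symmetrizing the hyperbolic part, so that the top-order terms cancel exactly as in the compressible Euler symmetrizer. The analogous cancellation between $b\cdot\nabla\p^\al b$ in $F_2$ and $b\cdot\nabla\p^\al u$ in $F_3$ follows by integration by parts, because ${\rm div}\,b=0$.

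I expect this nonlinear cancellation at top vertical order to be the crux. It requires choosing the weights so that every uncontrolled $\p_3^{m+1}$ term cancels. The leftover terms must then contain at least one factor from $\md$ and be at most quadratic in undissipated quantities times $\md^{1/2}$.
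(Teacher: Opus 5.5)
\textbf{There is a genuine gap: the top-order vertical cubic terms are not time-integrable as you treat them.}

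Your overall architecture matches the paper's. You use local existence plus a bootstrap, the $H^m$ energy identity with cancelling linear couplings, and the cross terms $\int \p_2\p^\alpha u\cdot\p^\alpha b\,dx$ and $\int \p^\alpha u_h\cdot\nabla_h\p^\alpha\vro\,dx$ to obtain $\|(\p_2 b,\nabla_h\vro)\|_{H^{m-1}}^2$. You also use the ${\rm div}\,b=0$ cancellation of the $b\cdot\nabla$ terms. But the step you declare acceptable is exactly where the argument breaks.

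The bound $\int {\rm div}\,u\,|\p_3^m\vro|^2dx\le\|{\rm div}\,u\|_{L^\infty}\|\p_3^m\vro\|_{L^2}^2\lesssim \md^{1/2}\me$ is not of the form $\me^{1/2}\md$, and product estimates cannot improve it. In this cubic term only ${\rm div}\,u$ is dissipated: $\p_3^m\vro$ never is, and $\nabla_h\p_3^m\vro$ is not even in the energy. Under the bootstrap you only get $\int_0^t\md^{1/2}\me\,d\tau\le\delta^{3/2}t^{1/2}$, which is not uniform in time.

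Writing $\p_3u_3={\rm div}\,u-\nabla_h\cdot u_h$ does not help, because the obstruction is the pair of undissipated factors $\p_3^m\vro$, not the velocity factor. Nor does the symmetrizer. With your weight $1+\vro$ it does remove the top-order pairing of $\vro\,\p_3^m{\rm div}\,u$ with the pressure. However, testing $\p_3^m$ of the continuity equation against $(1+\vro)\p_3^m\vro$ still leaves, besides harmless commutators,
\[
-(m+1)\int(1+\vro)\,{\rm div}\,u\,|\p_3^m\vro|^2dx+m\int(1+\vro)\,\nabla_h\cdot u_h\,|\p_3^m\vro|^2dx-m\int(1+\vro)\,\p_3u_h\cdot\nabla_h\p_3^{m-1}\vro\,\p_3^m\vro\,dx .
\]
This is precisely the combination the paper isolates as the main difficulty; in the last term $\p_3u_h$ is not dissipated either.

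The induction equation produces the same defect in terms you do not list. Transport, the term $b\,{\rm div}\,u$, and the commutators in $\p_3^m(u\cdot\nabla b)$ and $\p_3^m(b\cdot\nabla u)$ give $\int{\rm div}\,u\,|\p_3^m b_h|^2dx$, $\int\p_2u_2\,|\p_3^m b_h|^2dx$ and $\int\p_2u_1\,\p_3^m b_1\,\p_3^m b_2\,dx$. Here $\p_3^m b_h$ is undissipated, and integrating by parts in $x_2$ would create $\p_2\p_3^m b$, which is uncontrolled. So the real obstruction is time integrability of cubic terms with two undissipated top-order factors, not derivative loss.

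\textbf{What is missing is the mechanism that trades one undissipated factor for dissipated ones through the equations, at the price of cubic correctors in the energy.} The paper proceeds in the following steps.
\begin{enumerate}
\item Replace one factor $\p_3^m\vro$ by $\p_3^{m-1}$ of the vertical momentum equation, $\p_3\vro=-\p_tu_3+\p_1^2u_3+\p_2^2u_3+\p_3{\rm div}\,u+\p_2b_3-\p_3b_2+(F_2)_3$. This produces the corrector $\frac{d}{dt}\int{\rm div}\,u\,\p_3^m\vro\,\p_3^{m-1}u_3\,dx$, terms bounded by $\me^{1/2}\md$, and the new term $\int{\rm div}\,u\,\p_3^m\vro\,\p_3^mb_2\,dx$.
\item A second substitution converts that term into $-\int{\rm div}\,u\,|\p_3^mb_2|^2dx$.
\item Write ${\rm div}\,u=-\p_t\vro-u\cdot\nabla\vro-\vro\,{\rm div}\,u$ and use the $b_2$-equation. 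This gives $\frac{d}{dt}\int\vro|\p_3^mb_2|^2dx$ and $-2\int\vro\,\p_3^mb_2\,b\cdot\nabla\p_3^mu_2\,dx$; the latter genuinely loses a vertical derivative.
\item That derivative-losing term is cancelled by the Lorentz term $\frac{1}{1+\vro}b\cdot\nabla b_2$ when $\p_3^m$ of the $u_2$-equation is tested against $2(1+\vro)\vro\,\p_3^mu_2$.
\item The leftover $\int(1+\vro)^2\vro\,\p_2\p_3^m\vro\,\p_3^mu_2\,dx$ is handled by one more substitution for $\p_3\vro$.
\end{enumerate}
The $\nabla_h\cdot u_h$, $\p_3u_h\cdot\nabla_h\p_3^{m-1}\vro$, $\p_2u_2$ and $\p_2u_1$ terms are treated in the same spirit, the last two by substituting $\p_2u_h$ from the induction equation.

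All these correctors, collected in the paper's functional $F(\vro,u,b)$, are cubic, so $\widetilde{\me}\simeq\me$ for small data. Only with them does $\frac{d}{dt}\widetilde{\me}+c\,\md\lesssim\me^{1/2}\md$ hold. Without this step, or another way of making these cubic terms time-integrable, your differential inequality is not established and the bootstrap does not close.
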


\begin{rema}
	The background magnetic field $B^*=(0,1,0)$ provides enhanced dissipative  structure for the derivative of magnetic field in the $x_2$ direction in Theorem \ref{main_result}.
	Due to the absence of vertical dissipation of magnetic field, we can only obtain the  horizontal dissipation of density by using the pressure term in the momentum equation.
\end{rema}

\begin{rema}
Due to the absence of dissipative structure in the
$x_3$ direction, the nonlinear term $b\cdot \nabla b$ including
in F$_2$ in \eqref{nonlinear} will cause the loss of vertical derivative problem.
Thus, we take the strategy of nonlinear cancellation mechanism
to overcome this problem, see next section in detail.
\end{rema}

\vskip .1in
The proof rests on two intertwined mechanisms: an enhanced dissipation extracted from the equilibrium magnetic field, which supplies the directions of damping that are missing from \eqref{eqr}, and a nonlinear cancellation mechanism, which removes a genuine loss of vertical derivatives created by the compressible coupling. We outline both here; the full argument occupies Sections \ref{section-approach} and \ref{global-estimate}.

The natural energy estimate for \eqref{eqr} only controls the dissipation $\|(\p_1 u,\p_2 u,\mathrm{div}\,u,\p_1 b)\|_{L^2_t H^m}$, which is too weak to absorb the nonlinear terms. The first idea is that the background field $e_2$ creates additional, hidden dissipation. Writing the velocity equation as
\beqq
\p_t u-\p_1^2 u-\p_2^2 u-\nabla\,\mathrm{div}\,u
+\underbrace{\nabla\vro+\nabla b_2-\p_2 b}_{\text{linear coupling}}=F_2,
\deqq
and testing successively against $\p_2 b$ and against $(\p_1 \vro, \p_2 \vro)$, the divergence-free condition $\mathrm{div}\,b=0$ first yields control of $\p_2 b$, and then, through the pressure term $\nabla\vro$, control of the horizontal density gradient $(\p_1 \vro, \p_2 \vro)$. In this way the linear coupling alone produces the enhanced dissipation estimate for $(\p_1 \vro, \p_2 \vro, \p_2 b)$, recovering damping in directions where neither the velocity nor the magnetic field is directly dissipated.

The second, and more serious, difficulty is the absence of any dissipation in the $x_3$ direction together with the very weak dissipative structure of the density. When estimating the highest-order vertical derivatives, several critical terms arise, for instance those of the form $\int \mathrm{div}\,u\,|\p_3^m\vro|^2 \, dx$ and $\int \p_2 u_1\,\p_3^m b_1\,\p_3^m b_2 \, dx$, which cannot be handled by the anisotropic Sobolev inequalities. Repeatedly substituting the momentum equation to trade vertical derivatives of the density for derivatives of the better-behaved magnetic field, and exploiting the pressure, eventually exposes a term that loses one vertical derivative and cannot be closed directly. The resolution is the nonlinear cancellation mechanism: the offending term is produced by the Lorentz nonlinearity $\tfrac{1}{1+\vro}b\cdot\nabla b$ in F$_2$, and an identical term of opposite sign is generated when the velocity equation is differentiated and paired against a suitable weighted multiple of $\p_3^m u$. Adding the two contributions, the derivative-losing terms cancel exactly, and the remaining pieces carry enough dissipation supplied by the very enhanced-dissipation estimates obtained above to be controlled. A closely related cancellation, again driven by $\mathrm{div}\,b=0$, disposes of the terms $\int b\cdot\nabla\p_i^m u\,\p_i^m b \, dx$ and $\int\frac{1}{1+\vro}b\cdot\nabla\p_i^m b\,\p_i^m u \, dx$ for $i=2,3$, and explains why the velocity field must be weighted by $1+\vro$ in the energy. With the horizontal and enhanced-dissipation estimates in hand, a bootstrap argument closes the global-in-time bounds and yields Theorem \ref{main_result}.

\vskip .1in
The mathematical study of stability and large-time behavior for compressible Navier--Stokes and MHD systems goes back to the seminal work of Matsumura and Nishida \cite{Matsumura-Nishida1980,Matsumura-Nishida1983} and the structural theory of Shizuta, Kawashima and collaborators \cite{Umeda-Kawashima-Shizuta1984,Shizuta-Kawashima1985,Kawashima1984}, which established global existence and decay for small perturbations of equilibria under full dissipation. For the compressible MHD equations with full dissipation, a rich theory of strong solutions, weak solutions with large data, and blow-up criteria has been developed \cite{Chen-Wang2002,HW2010,HL2013}. The problems considered in the present paper are far more degenerate, and a central theme of recent research has been to identify the minimal dissipation, often supplemented by the stabilizing effect of a background magnetic field, under which stability still holds.

This stabilizing effect is by now firmly established in the incompressible setting. Nonlinear stability for the ideal MHD equations near a background field was obtained in \cite{Longtime1988,HXY2018,PZZ2018,WZ2017}, and an extensive literature treats incompressible MHD with partial dissipation, fractional dissipation, or damping
\cite{DZ2018,RWXZ2014,TW2018,WWX2015,CWY2014,DLW2017,Fefferman2014,
Wu2021Advance,new-Abidi-Zhang2017,new-Boardman-Lin-Wu2020,
new-Cao-Regmi-Wu2013,new-Cao-Wu2011,new-Chen-Zhang-Zhou2022,
new-Du-Zhou2015,new-Fefferman-McCormick-Robinson-Rodrigo2017,
new-Jiang-Jiang-Zhao2022,new-Lai-Wu-Zhang2021,new-Lai-Wu-Zhang2022,
new-Li-Yang-2024,new-Lin-Ji-Wu2020,new-Lin-Zhang2014,new-Lin-Zhang2015,
new-Sermange-Temam1983,new-Xie-Jiu-Liu2024,new-Xu-Zhang2015,
new-Zhang2016,new-Hu-Lin-arxiv,Zhang-2009}.
A recurring conclusion is that the velocity--field coupling generates an effective dissipation that compensates for missing viscosity; most strikingly, the three-dimensional incompressible MHD equations have recently been shown to be globally well-posed near a background field with dissipation in only one direction of the momentum equation \cite{Gao2025,Lai-Wu-Zhang-Zhao2026,Lin-Wu-Zhu-2025}. These developments are the direct inspiration for the present work.

For the compressible MHD system near a background field, the picture is considerably less complete. In the viscous, non-resistive case, Hu and Lin \cite{new-Hu-Lin-arxiv} established well-posedness for the two-dimensional equations with a special class of data; Wu and Wu \cite{WW2017} introduced a systematic wave-equation reformulation and Fourier-analytic approach in the whole space; Wu and Zhu \cite{Wu-Zhu-2022} obtained global existence and decay in the two-dimensional periodic setting; and, under a Diophantine condition on the background field, Wu and Zhai \cite{Wu-Zhai-2023} proved global existence and stability in a two-dimensional periodic domain. By contrast, whether smooth solutions of the three-dimensional compressible viscous, non-resistive MHD equations in the whole space are always global remains a challenging open problem. The compressible framework is genuinely harder than the incompressible one: the density equation carries no dissipation of its own, so the enhanced dissipation produced by the magnetic field must simultaneously stabilize the velocity and the density, and the incompressible techniques do not transfer directly.

\vskip .1in
Our system \eqref{eqr0} retains only horizontal velocity dissipation and one-directional magnetic diffusion, which places it strictly between the fully resistive compressible MHD equations and the resistivity-free model. Theorem \ref{main_result} shows that even this minimal dissipative structure, assisted by the background field, suffices for global stability in the three-dimensional whole space. The result is also natural from the hydrodynamic side: deleting the magnetic field from \eqref{eqr0} leaves the compressible Navier--Stokes equations with only horizontal dissipation, whose global stability was recently established in \cite{FHWW2025}; our theorem incorporates the magnetic coupling while preserving the same anisotropic, partially dissipative character. We mention that the corresponding anisotropic incompressible Navier--Stokes equations have a well-developed theory of well-posedness in Sobolev and Besov spaces \cite{ani-CDGG2000,Chemin2007,Iftimie2002,Liu-Paica-Zhang-2020,Paicu2005,Zhang-2009,new-Cao-Wu2025} and of large-time behavior in $\mathbb{T}^2\times\mathbb{R}$ and $\mathbb{R}^3$ \cite{jWy2021,JTW2023}. More important than the specific result, we believe, are the two mechanisms developed here: the extraction of enhanced dissipation from the background field and the nonlinear cancellation that defeats the loss of vertical derivatives, which are robust and should be useful for other anisotropic compressible models with severely degenerate dissipation.

\medskip 
Throughout this paper, we use symbol $A \lesssim B$ for $ A\le C B$ where $C>0$ is a constant which may change from line to line and independent of time $t$.

\medskip 
        The rest of the paper is organized as follows.
		In Section \ref{section-approach}, we explain the difficulties and our approach to establish
		the global in time estimates
		for the system  \eqref{eqr}.
        In Section  \ref{global-estimate}, applying the energy method, we establish the global in time estimates
		for system  \eqref{eqr}  and then prove Proposition \ref{main_pro}. 
        Finally, under the help of Proposition \ref{main_pro} and using continuity argument, we prove Theorem \ref{main_result} in Section \ref{sec-theorem}. 
        
        \section{Difficulties and outline of our approach}\label{section-approach}
In this section, we will explain the main difficulties of proving Theorem 
			\ref{main_result} and our strategies for overcoming it.
           In order to establish the global estimates
\eqref{global_estimate}
in Theorem \ref{main_result}, our proof will divide into the following three steps.

{\textbf{Step 1: Estimates of horizontal derivative and enhanced dissipation.}}  
First of all, we establish the estimates for the horizontal derivative
		of density, velocity and magnetic field as well as control the nonlinear terms by energy norm that includes the vertical derivative of solution
		and the horizontal derivative of dissipation norm
		with the help of anisotropic Sobolev inequalities.
        In this process, the natural dissipation $\|(\p_1 u, \p_2 u, {\rm div u}, \p_1 b)\|_{L^2(0,T;H^{m})}$ cannot fully control the
nonlinear terms. To overcome this, we must exploit the enhanced dissipation induced by the background magnetic
field.
Indeed, from  equation \eqref{eqr}, we have
\begin{equation}\label{201}
\p_t u-\p_{1}^2u-\p_{2}^2u-\nabla {\rm div}u
				+\underset{\rm linear~coupling}{\underbrace{\nabla \vro+\nabla b_2-\p_2 b}}=F_2.
\end{equation}
It seems difficult to establish the enhanced dissipation
estimates $\|(\p_1 \vro, \p_2 \vro, \partial_2 b)\|_{L^2(0, T; H^{m-1})}$ by using the linear terms in \eqref{201}.
However, by taking the inner product of $\p_2 b$ with 
\eqref{201}, we have
\begin{equation*}
\i |\p_2 b|^2 dx
=\i (\nabla \vro+\nabla b_2) \cdot \p_2 b \, dx
+\underset{\rm Easy~terms}{\underbrace{\i (\p_t u-\p_{1}^2u-\p_{2}^2u-\nabla {\rm div}u-F_2)
\cdot \p_2 b \, dx}},
\end{equation*}
which, together with the  condition ${\rm div} b=0$, yields 
the enhanced dissipation estimates $\|\p_2 b\|_{L^2(0, T; L^2)}$.
This, together with the dissipation of magnetic field in the 
$x_1$ direction, yields the horizontal dissipation for 
the magnetic field $\|(\p_1 b, \p_2 b)\|_{L^2(0, T; L^2)}$.

Next, we take inner product of the horizontal part of \eqref{201} 
with $\nabla_h \varrho$ to obtain
\begin{equation*}
\i |\nabla_h \vro|^2 dx
=\i (\p_2 b_h-\nabla_h b_2) \cdot \nabla_h \vro \, dx
+\i (-\p_t u+\p_{1}^2u+\p_{2}^2u+\nabla {\rm div}u+F_2)_h
\cdot \nabla_h \vro \, dx,
\end{equation*}
where $\nabla_h:=(\p_1, \p_2, 0)$.
It is easy to 
obtain the enhanced dissipation estimates
$\|\nabla_h \vro\|_{L^2(0, T; L^2)}$.
Thus, we can take this strategy to establish the important
enhanced dissipation estimates 
$\|(\nabla_h \vro, \partial_2 b)\|_{L^2(0, T; H^{m-1})}$
from the linear terms in \eqref{201}.

    {\textbf{Step 2: Cancellation  mechanism.}}     
        Due to the only horizontal dissipation of velocity and magnetic field, we can not apply the anisotropic
		Sobolev inequalities to estimate the terms  
        for $i=2,3$
		$$
		\i b \cdot \nabla \p_i^m u \, \p_i^m b \ dx
		~{\rm and}~ \i \f{1}{1+\vro} b \cdot \nabla \p_i^{m} b \, \p_i^m u\ dx.
		$$
		Then, we can overcome this difficulty by using 
        cancellation mechanism. To be precise, we have
		\beqq
		\i  b \cdot \nabla \p_i^{m} u \, \p_i^m b \ dx 
        +  \i  b \cdot \nabla \p_i^{m} b \, \p_i^m u \ dx  = - \i {\rm{div}} b \, \p_i^{m} u \, \p_i ^m b \ dx =0.
		\deqq
		Thus, it is necessary to  add the weight $1+\vro$ to velocity field as we establish the energy estimates, see Lemma \ref{lemma32} in detail.

{\textbf{Step 3: Estimates of vertical derivative.}}  
In order to close the energy estimates, we need to establish the estimates for the vertical derivative of density, velocity and magnetic field.
Due to the lack of vertical dissipation of the
magnetic field and the lower dissipative structure of density in the horizontal directions, we will encounter some problematic terms, such as $-\i \du \, |\p_3^m \vro|^2 \, dx,
\i \nabla_h \cdot u_h \, |\p_3^m \vro|^2 \, dx$
and $ \i \p_2 u_1 \, \p_3^m b_1 \, \p_3^m b_2 \, dx$.
In order to deal with these weak dissipative structure terms,
we will use the good effect of pressure and background magnetic field to control this term.

First of all, let us deal with the difficult term
$-\i \du \, |\p_3^m \vro|^2 \, dx$  in four iterations as follows.

{\textbf{The first iteration:}}  
Since the magnetic field has the good dissipative structure
in the $x_1$ direction, we apply the momentum equation to
convert the critical vertical derivative of density into
magnetic field.
Thus, we  substitute the equation of $\p_3 \vro$
\beq\label{202}
         \p_3 \vro = -\p_t u_3 + \p_1^2 u_3 + \p_2^2 u_3+ \p_3 \du+\p_2 b_3-
         \underset{\rm Bad~terms}{\underbrace{ \p_3 b_2+ (F_2)_3 }}
 \deq
into $-\i \du \, |\p_3^m \vro|^2 \, dx $,
then we have
\beq\label{203}
\begin{aligned}
-\i \du \, |\p_3^m \vro|^2 \, dx  
=&-\i \du \, \p_3^m \vro  \,
\p_3^{m-1}( -\p_t u_3 + \p_1^2 u_3 + \p_2^2 u_3+ \p_3 \du+\p_2 b_3)\, dx  \\
&+  \underset{\rm Bad~terms}{\underbrace{\i \du \, \p_3^m \vro \, \p_3^m b_2 \, dx  
-\i \vro \, \du \, \p_3^m \vro \,  
\p_3^{m-1} (F_2)_3  \, dx.}}
\end{aligned}
\deq
The last term on the right-hand side of \eqref{203}
will generate a difficult term
$\i \vro \, \du \, |\p_3^m \vro|^2 \, dx$.
Substituting the equation of $\du$
        \beq \label{204}
       \du = -\p_t \vro - u \cdot \nabla \vro - \vro \, \du,
        \deq
        into this term, we have
        \beq\label{205}
        \i \vro \, \du \, |\p_3^m \vro|^2 \, dx = 
       -\i \vro \, (\p_t \vro + \vro \, \du+u \cdot \nabla \vro) |\p_3^m \vro|^2 \, dx.
        \deq
       Then, we transfer the time derivative from $\p_t \vro$ into $\p_t \p_3^m \vro$ and using $\eqref{eqr}_1$ again to
		deal with the first term and the third term by the cancellation mechanism on the right-hand side of \eqref{205}. 

{\textbf{The second iteration:}}  
In order to deal with the term 
$\i \du \, \p_3^m \vro \, \p_3^m b_2 \, dx$,
    we  substitute the equation of $\p_3 \vro$ to the second term of $\eqref{203}$ again, then it holds
         \beq \label{206}
         \begin{aligned}
         \i \du \, \p_3^m \vro \, \p_3^m b_2 \, dx =
         &\underset{\rm Easy~terms}{\underbrace{\i \du \{\p_t u_3 + \p_1^2 u_3 + \p_2^2 u_3+ \p_3 \du+\p_2 b_3+ (F_2)_3\} \p_3^m b_2 \, }}
          dx\\
         &\underset{\rm Bad~ term}{\underbrace{-\i \du \, |\p_3^m b_2|^2 \, dx}}.
        \end{aligned}
         \deq
Due to the good dissipative structure of magnetic field 
in the $x_1$ direction, we convert the estimate of 
$-\i \du \, |\p_3^m \vro|^2 \, dx $ 
into that of $-\i \du \, |\p_3^m b_2|^2 \, dx$
by employing equation \eqref{202} in the above two iterations.

{\textbf{The third iteration:}}  
Using equations \eqref{204} and \eqref{eqr}$_3$, 
we can rewrite the difficult term in \eqref{206}  as follows
\begin{equation}\label{207}
\begin{aligned}
-\i \du \, |\p_3^m b_2|^2 \, dx
=&\frac{d}{dt}\i \vro \, |\p_3^m b_2|^2 \, dx
+\i  (u \cdot \nabla \vro+\vro \, \du) |\p_3^m b_2|^2 \, dx\\
&+2\i \vro \, \p_3^m b_2 \, \p_3^m (-\p_{1}^2 b_2
				+{\rm div}u-\p_2 u_2
                +u\cdot \nabla b_2+b_2\,{\rm div}u)\, dx\\
& -\underset{\rm Bad ~ term}{\underbrace{2\i \vro\, \p_3^m b_2  \,\p_3^m(b\cdot \nabla u_2)\, dx}}.
\end{aligned}
\end{equation}
The last term on the right-hand side of \eqref{207} will 
generate the most difficult term $-2\i \vro \, \p_3^m b_2 \, b \cdot \p_3^m \nabla u_2 \, dx$. 
In order to deal with this term, we integrate by parts to obtain
        \beq\label{208}
        -2\i \vro \, \p_3^m b_2 \, b \cdot \p_3^m \nabla u_2 \, dx =\underset{\rm Bad~ term:~loss~of~vertical~derivative}{\underbrace{ 2\i \vro \, \p_3^m u_2 \, b \cdot \p_3^m \nabla b_2 \, dx}} + 2\i b \cdot \nabla \vro\,\p_3^m u_2 \, \p_3^m  b_2 \, dx,
        \deq
       where the former term creates the loss of vertical derivative problem.

{\textbf{The fourth iteration and nonlinear cancellation  mechanism:}}  
 However,  we can eliminate the problematic term by subtracting them 
	in an appropriate way by using the equation of magnetic field. To be precise,  applying $\p_3^m$-operator to the equation $\eqref{eqr}_2$ 
    \beqq
    \p_t \p_3^m u_2=
    \p_3^m\{\p_{1}^2 u_2+\p_{2}^2 u_2+\p_2 {\rm div}u
				-\p_2 \vro+(F_2)_2\}
    \deqq
    and multiplying by $2(1+\vro) \vro\, \p_3^m u_2$, we have
        \beq\label{209}
        \bal
           2\i \p_t \p_3^m u_2 \, (1+ \vro) \vro \, \p_3^m u_2 \, dx
          = &\;  
          \underset{\rm Cancelate~bad~term~in~\eqref{208} }{\underbrace{2\i \vro \, \p_3^m u_2 \, b \cdot \p_3^m \nabla b_2 \, dx }}
          - 2\i (1+ \vro)^2 \vro \, \p_3^m  \p_2 \vro \, \p_3^m u_2 \, dx + \cdots,
        \dal 
        \deq
       where the symbol $\cdots$ represents the good dissipative structure terms which are easier to be controlled that
	    we want the reader to ignore at this moment. 
        Here we point out that the nonlinear term
       $\frac{1}{1+\vro}b\cdot \nabla b_2$ in F$_2$ generates
       the new nonlinear term 
       $\i \vro \, \p_3^m u_2 \, b \cdot \p_3^m \nabla b_2 \, dx$
       on the right-hand side of \eqref{209}.
      Substituting  \eqref{209} into \eqref{208}, we obtain that
       \beqq \bal
       - 2\i \vro \, \p_3^m b_2 \, b \cdot \p_3^m \nabla u_2 \, dx = &2\i b \cdot \nabla \vro\,\p_3^m u_2 \, \p_3^m  b_2 \, dx+ \i \p_t \p_3^m u_2 \, (1+ \vro) \vro \, \p_3^m u_2 \, dx \\
       &+ 2
       \underset{\rm Bad~ term}{\underbrace{\i (1+ \vro)^2 \vro \, \p_3^m  \p_2 \vro \, \p_3^m u_2 \, dx }} + \cdots.
       \dal \deqq
Thus, we overcome the loss of vertical derivative problem
by the method of  \textit{nonlinear cancellation mechanism}.
Thanks to the derivative in the $x_2$ direction on the density, admitting enough dissipation through integration by parts and 
using the equation \eqref{202}, we can control the bad terms
$\i (1+ \vro)^2 \vro \, \p_3^m  \p_2 \vro \, \p_3^m u_2 \, dx $.
Thus, we apply this nonlinear cancellation  mechanism method to 
overcome the loss of vertical derivative problem.

\medskip 
Next, let us deal with difficult term 
$\i \nabla_h \cdot u_h \, |\p_3^m \vro|^2 \, dx$.
Similar to the difficult term $-\i \du \, |\p_3^m \vro|^2 \, dx$,
we use the equation \eqref{202} twice to convert the estimate of this term into that of new term
$\i \nabla_h \cdot u_h \, |\p_3^m b_2|^2 \, dx$.
Obviously, we have
         \beqq
         \i \nabla_h \cdot u_h \, |\p_3^m b_2|^2 \, dx = 
         \underset{\rm Easy~ term}{\underbrace{\i \p_1 u_1 \, |\p_3^m b_2|^2 \, dx}} 
         +\underset{\rm Bad~ term}{\underbrace{\i \p_2 u_2 \, |\p_3^m b_2|^2 \, dx}}.
         \deqq
 Apply the equation of $\p_2 u_2$ in \eqref{eqr}
         \beqq
         \p_2 u_2 = \p_t b_2 - \p_1^2 b_2 + \du - (F_3)_2,
         \deqq
         thus we have
         \beq \label{210}
         \begin{aligned}
      \i \p_2 u_2 \, |\p_3^m b_2|^2 \, dx
      &=  \f{d}{dt} \i  b_2 \, |\p_3^m b_2|^2 \, dx 
      + 2 \i b_2  \p_3^m b_2 \,  \p_3^m (-\p_1^2 b_2+\p_1 u_1+\p_3 u_3) \, dx\\
      &- 2 \underset{\rm Bad~ term}{\underbrace{\i b_2  \p_3^m b_2 \,  \p_3^m (F_3)_2 \, dx}}
      +\i\{\du-\p_1^2 b_2- (F_3)_2\} \, |\p_3^m b_2|^2 \, dx.
      \end{aligned}
         \deq
    The bad term \eqref{210} generates the difficult term
    $\i b_2 \, \p_3^m b_2 \, b \cdot \p_3^m \nabla u_2 \, dx$ 
    that cause the loss of vertical derivative.
    Thus, we integrate by parts to obtain 
        \beqq
         \i b_2 \, \p_3^m b_2 \, b \cdot \p_3^m \nabla u_2 \, dx =  \underset{\rm Bad~ term}{\underbrace{ -\i b_2 \, \p_3^m u_2 \, b \cdot \p_3^m \nabla b_2 \, dx}} - \i b \cdot \nabla b_2 \,\p_3^m u_2 \, \p_3^m  b_2 \, dx,
        \deqq
         which can be estimated similar to the estimate of \eqref{208} by the nonlinear cancellation mechanism. 
         Thus we end the estimate of difficult term 
         $\i \nabla_h \cdot u_h \, |\p_3^m \vro|^2 \, dx$,
         despite the complexity and length of the process. 
         
\medskip 
Finally, similar to the estimate of 
$\i \p_2 u_2 \, |\p_3^m b_2|^2 \, dx$, 
we can apply the equation
\beqq
\p_2 u_1 =\p_t b_1 - \p_1^2 b_1 - (F_3)_1,
\deqq
and the nonlinear cancellation  mechanism to control the
difficult term $ \i \p_2 u_1 \, \p_3^m b_1 \, \p_3^m b_2 \, dx$.
Therefore, we complete the proof for the vertical 
derivative of solution.

        \section{Global in time estimates}\label{global-estimate}
In this section, we will establish global-in-time estimates for the system \eqref{eqr} under the condition of  small initial data \eqref{condition}.  First of all, 
		similar to the result in \cite{Matsumura-Nishida1980},
		one can establish the 
		local-in-time existence and uniqueness for system \eqref{eqr}.
		Then, we will extend the local-in-time solution to be global.
		Thus, our target in this section is to establish the global-in-time
		regularity under the condition of small initial data  \eqref{condition}.

        Let us define  $\alpha_h:=(\alpha_1, \alpha_2, 0)$.
		We use the following notations, for every $m \in \mathbb{N}$:
		\begin{equation*}
			\begin{aligned}
				&\Vert f \Vert_{H^m_{tan}}^2 :=
				\sum\limits_{0\le |\alpha_h| \le m} \Vert \p^{\alpha_h} f \Vert_{L^2}^2,
				\quad
				\Vert f \Vert_{H^m}^2 :=
				\sum\limits_{0\le|\alpha| \le m} \Vert \p^\alpha f \Vert_{L^2}^2.
			\end{aligned}
		\end{equation*}
        Recall 
         $\nabla_h=(\p_1, \p_2, 0)$, then we define the energy functional
\begin{equation*}
	\begin{aligned}
		\me(t)
		:=& \|(\vro, u, b)(t)\|_{H^m}^2,
	\end{aligned}
\end{equation*}
and the dissipation functional
\begin{equation*}
	\begin{aligned}
		\md(t)
		:= \|(\nabla_h u, \mathrm{div} u, \p_1  b)(t)\|_{H^m}^2 
		+ \|(\partial_2 b, \nabla_h \vro)(t)\|_{H^{m-1}}^2.
	\end{aligned}
\end{equation*}
        
        Now, let us state the global estimates as follows. 
		\begin{prop}\label{main_pro}
			Let $m \ge 3$ be an integer, assume the initial data $(\vro_0, u_0, b_0)$ satisfying
			${\rm div} b_0=0$.
			For the solution $(\vro,u, b)$ of equation \eqref{eqr}
			defined on $ [0,T] \times \mathbb{R}^3$, assume there exists
			a small positive constant $\delta$ such that for any $t \in (0, T]$
			\begin{equation}\label{assumption}
				\underset{0\le \tau \le t}{\sup} \me(\tau)+\int_0^t \md(\tau) d\tau \le \delta,
			\end{equation}
			then the  solution of equation \eqref{eqr} has the estimate
			\begin{equation}\label{close_assumption}
				\underset{0\le \tau \le t}{\sup} \me(\tau)+\int_0^t \md(\tau) d\tau \le \frac{\delta}{2},
			\end{equation}
			where  the small positive constant $\delta:= 2C \me(0)$,
			and $C$ is a positive constant independent of time $t$.
		\end{prop}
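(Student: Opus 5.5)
The plan is to close the a priori bound \eqref{close_assumption} by deriving, under \eqref{assumption}, an energy inequality of the form
\begin{equation*}
\frac{d}{dt}\widetilde{\me}(t)+c_0\,\md(t)\lesssim \big(\me(t)^{1/2}+\me(t)\big)\md(t),
\end{equation*}
where $\widetilde{\me}$ is a modified energy equivalent to $\me$. Integrating in time and using $\delta$ small then gives $\sup\me+\int_0^t\md\le C\me(0)=\delta/2$. The functional $\widetilde{\me}$ is built in three pieces, following the outline of Section \ref{section-approach}.

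\textbf{Step 1: weighted horizontal energy.} I would apply $\p^{\alpha}$ with $|\alpha|\le m$ to \eqref{eqr}. I would test the density equation with $\p^\alpha\vro$, the momentum equation with $(1+\vro)\p^\alpha u$ (the weight comes from writing the momentum equation in the form $\rho\p_t u$), and the induction equation with $\p^\alpha b$. The linear couplings $\nabla\vro$, $\nabla b_2-\p_2 b$, $e_2\du-\p_2 u$ cancel after integration by parts. This leaves the natural dissipation $\|(\nabla_h u,\du,\p_1 b)\|_{H^m}^2$.

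For the top-order terms in $x_2$ and $x_3$, I would pair $\i b\cdot\nabla\p_i^m u\,\p_i^m b$ with $\i b\cdot\nabla\p_i^m b\,\p_i^m u$. Their sum vanishes by ${\rm div}\,b=0$, which is exactly why the weight $1+\vro$ must be placed on $u$. The remaining nonlinear terms in which some horizontal derivative falls on a factor are handled with anisotropic inequalities of the type
\begin{equation*}
\i |fgh|\,dx\lesssim \|f\|_{L^2}^{1/2}\|\p_1 f\|_{L^2}^{1/2}\|g\|_{L^2}^{1/2}\|\p_2 g\|_{L^2}^{1/2}\|h\|_{L^2}^{1/2}\|\p_3 h\|_{L^2}^{1/2},
\end{equation*}
which bound them by $\me^{1/2}\md$.

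\textbf{Step 2: enhanced dissipation.} For $|\alpha|\le m-1$ I would add $\varepsilon$ times the cross terms $-\i\p^\alpha u\cdot\p^\alpha\p_2 b\,dx$ and $\i\p^\alpha u_h\cdot\p^\alpha\nabla_h\vro\,dx$. Their time derivatives produce $\|\p^\alpha\p_2 b\|^2$ and $\|\p^\alpha\nabla_h\vro\|^2$. Here ${\rm div}\,b=0$ gives $\i\nabla b_2\cdot\p_2 b=0$, and the cross term $\i(\p_2b_h-\nabla_hb_2)\cdot\nabla_h\vro$ is absorbed by Young's inequality once the $\p_2 b$ bound is in hand. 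The time derivative is moved onto $b$ and $\vro$ using their equations, which is harmless: it produces $\|\nabla_h u\|_{H^m}^2$ plus one derivative of $\du$. Taking $\varepsilon$ small keeps $\widetilde\me\sim\me$ and adds $\|(\p_2 b,\nabla_h\vro)\|_{H^{m-1}}^2$ to the dissipation.

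\textbf{Step 3: vertical top-order terms (the main obstacle).} After Steps 1 and 2, the terms not yet bounded by $\me^{1/2}\md$ are $-\i\du|\p_3^m\vro|^2$, $\i\nabla_h\cdot u_h|\p_3^m\vro|^2$ and $\i\p_2u_1\p_3^mb_1\p_3^mb_2$. I would run the four iterations of Section \ref{section-approach}:
\begin{itemize}
\item substitute \eqref{202} twice to convert $\p_3^m\vro$ into $\p_3^m b_2$ plus controllable terms;
\item use \eqref{204} and the equations for $b_2$, $b_1$ to write the residual as $\frac{d}{dt}\i(\vro\text{ or }b_j)|\p_3^m b_2|^2$ plus terms carrying $\p_1 b$ or $\nabla_h u$;
\item integrate by parts the derivative-losing term $2\i\vro\,\p_3^mb_2\,b\cdot\p_3^m\nabla u_2$;
\item cancel the result against the term generated by testing $\p_3^m(\eqref{eqr}_2)_2$ with $2(1+\vro)\vro\p_3^mu_2$, whose Lorentz part $\frac{1}{1+\vro}b\cdot\nabla b_2$ produces the opposite sign.
\end{itemize}
All of these $\frac{d}{dt}$ corrections are cubic, hence $O(\me^{3/2})$, so they can be included in $\widetilde{\me}$ without destroying the equivalence with $\me$.

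The leftover $\i(1+\vro)^2\vro\,\p_3^m\p_2\vro\,\p_3^m u_2$ is handled by integrating by parts in $x_2$ onto $u_2$, which has $H^m$ horizontal dissipation. Care is needed to check that each remaining term carries one $\md$-controlled factor with the correct derivative count, in particular that $\p_2\vro$ appears only at order $\le m-1$. I expect this bookkeeping, and verifying that every commutator in the iterations closes with at most $\p_3^m$ on undissipated quantities, to be the hardest part. Summing Steps 1–3 gives the differential inequality, and hence \eqref{close_assumption} with $\delta=2C\me(0)$.
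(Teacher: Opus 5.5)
There is a genuine gap. Your overall architecture is the paper's: the $(1+\vro)$-weighted energy with the ${\rm div}\,b=0$ cancellation, cross terms for $\p_2 b$ and $\nabla_h\vro$ with small constants, and repeated substitution of the momentum equation producing cubic $\frac{d}{dt}$ corrections collected in $\widetilde{\me}$. But the last step of Step 3 fails as written.

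If you integrate $\i(1+\vro)^2\vro\,\p_3^m\p_2\vro\,\p_3^mu_2\,dx$ by parts in $x_2$, you are left with
\[
-\i (1+\vro)^2\vro\,\p_3^m\vro\,\p_2\p_3^m u_2\,dx-\i \p_2\big((1+\vro)^2\vro\big)\,\p_3^m\vro\,\p_3^m u_2\,dx .
\]
\begin{itemize}
\item The factor $\p_3^m\vro$ has no dissipation at all, since $\md$ sees $\vro$ only through $\nabla_h\vro\in H^{m-1}$.
\item In the first integral, $\p_2\p_3^m u_2$ is already top order, so both it and $\p_3^m\vro$ must sit in $L^2$ and $\vro$ in $L^\infty$. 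But $\|\vro\|_{L^\infty}$ is not bounded by $\sqrt{\md}$, because the horizontal embedding into $L^\infty$ is critical; the anisotropic inequality gives only $\me^{1/8}\md^{3/8}$. You therefore get at best $\me^{5/8}\md^{7/8}$, or $\me\,\md^{1/2}$.
\item In the second integral, $\p_3^m u_2$ itself is undissipated, so you get at best $\me^{3/4}\md^{3/4}$, or $\me\,\md^{1/2}$.
\end{itemize}
A bound $\me^{a}\md^{\theta}$ with $\theta<1$ cannot be integrated in time uniformly in $t$, so the bootstrap does not close. This is the same obstruction as in $-\i\du|\p_3^m\vro|^2$: a top-order $\p_3^m\vro$ cannot be estimated away and has to be traded for dissipated quantities.

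The missing idea is one more substitution, as in \eqref{Nlinear1-07}. Write $\p_3^m\p_2\vro=\p_2\p_3^{m-1}(\p_3\vro)$ and replace $(1+\vro)\p_3\vro$ by the third component of the momentum equation,
\[
-\p_t u_3+\tfrac{1}{1+\vro}(\p_1^2u_3+\p_2^2u_3+\p_3\du+\p_2b_3-\p_3b_2)+\cdots .
\]
The $\p_t$ part gives a further correction $-2\frac{d}{dt}\i(1+\vro)\vro\,\p_3^{m-1}\p_2u_3\,\p_3^mu_2\,dx$. It also leaves a term handled by integrating by parts in $x_3$ and using $\|\p_t u_h\|_{H^{m-1}}\lesssim\sqrt{\md}$, which itself relies on the enhanced dissipation of $\nabla_h\vro$ and $\p_2 b$. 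Only after this substitution is the $x_2$ integration by parts harmless. At that point $\p_3^m\vro$ has been replaced by $\p_3^{m-1}$ of dissipated quantities, plus $\p_3^m b_2$, which is handled through its $\p_1$-dissipation.

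Relatedly, your list of terms surviving Steps 1--2 is incomplete. Two further terms fail to be $O(\sqrt{\me}\,\md)$ for the same reason:
\begin{itemize}
\item $-\i\p_3^m\vro\,(N_1)_h\cdot\p_3^mu_h\,dx$, coming from $\frac{\vro}{1+\vro}N_1$ in $F_2$;
\item $-m\i\p_3u_h\cdot\nabla_h\p_3^{m-1}\vro\,\p_3^m\vro\,dx$, coming from the density commutator.
\end{itemize}
Each needs its own substitution of \eqref{202}; these produce the first term and the $\p_3 u_h$-term of $F$ in \eqref{def-F}.
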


        Before we give the proof of the above Proposition
        \ref{main_pro}, let us state some anisotropic Sobolev inequalities used frequently in our paper.
			\begin{lemm}\label{lemm:sobolev-ie}
				For any suitable functions $(f(x), g(x), h(x), v(x))$ defined on $\mathbb{R}^3$ and different numbers $i,j,k \in \{1, 2, 3\}$,
				the following estimates hold
				\beq \label{ie:Sobolev}
				\bal
				\|f\|_{L^\infty}  \lesssim &\; \|f\|_{L^2}^{\f18}\|\p_1 f\|_{L^2}^{\f18}\|\p_2 f\|_{L^2}^{\f18}\|\p_{12} f\|_{L^2}^{\f18}
				\|\p_3 f\|_{L^2}^{\f18}\|\p_{13} f\|_{L^2}^{\f18}\|\p_{23} f\|_{L^2}^{\f18}\|\p_{123} f\|_{L^2}^{\f18},\\
				\int_{\mathbb{R}^3} |f g h |\, dx
				\lesssim &\;  \|f\|_{L^2}
				\|g\|_{L^2}^{\frac12}
				\|\p_i g\|_{L^2}^{\frac12}
				\|h\|_{L^2}^{\frac14}
				\|\p_j h\|_{L^2}^{\frac14}
				\|\p_{k} h\|_{L^2}^{\frac14}
				\|\p_{jk} h\|_{L^2}^{\frac14},\\
				\int_{\mathbb{R}^3} |f g h |\, dx
				\lesssim &\; \|f\|_{L^2}^{\frac12}
				\|\p_1 f\|_{L^2}^{\frac12}
				\|g\|_{L^2}^{\frac12}
				\|\p_2 g\|_{L^2}^{\frac12}
				\|h\|_{L^2}^{\frac12}
				\|\p_3 h\|_{L^2}^{\frac12},\\
				\int_{\mathbb{R}^3} |f g h v|\, dx
				\lesssim &\; \|f\|_{L^2}^{\frac12}
				\|\p_i f\|_{L^2}^{\frac12}
				\|g\|_{L^2}^{\frac14}
				\|\p_j g\|_{L^2}^{\frac14}
                \|\p_k g\|_{L^2}^{\frac14}
                \|\p_{jk} g\|_{L^2}^{\frac14} \\
				&\; \times \|h\|_{L^2}^{\frac12}
				\|\p_i h\|_{L^2}^{\frac12}\|v\|_{L^2}^{\frac14}
				\|\p_j v\|_{L^2}^{\frac14}
                \|\p_k v\|_{L^2}^{\frac14}
                \|\p_{jk} v\|_{L^2}^{\frac14}.
				\dal
				\deq
			\end{lemm}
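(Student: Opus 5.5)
The four inequalities in Lemma \ref{lemm:sobolev-ie} are all proved the same way. We combine the one-dimensional Agmon-type inequality
\begin{equation*}
\|g\|_{L^\infty(\mathbb{R})}\lesssim \|g\|_{L^2(\mathbb{R})}^{\frac12}\|g'\|_{L^2(\mathbb{R})}^{\frac12}
\end{equation*}
with Hölder's inequality in mixed Lebesgue norms and Minkowski's integral inequality. We apply the one-dimensional inequality one variable at a time. The points needing care are the order in which the mixed norms are taken and the use of Minkowski to move an $L^\infty$ norm inside an $L^2$ norm.

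\textbf{First inequality.} Fix $(x_1,x_2)$ and apply the 1D inequality in $x_3$:
\begin{equation*}
|f(x)|\lesssim \|f(x_h,\cdot)\|_{L^2_{x_3}}^{\frac12}\|\p_3 f(x_h,\cdot)\|_{L^2_{x_3}}^{\frac12}.
\end{equation*}
Take the supremum over $x_h$ and use Minkowski in the form $\|\cdot\|_{L^\infty_{x_h}L^2_{x_3}}\le \|\cdot\|_{L^2_{x_3}L^\infty_{x_h}}$. Then, for each fixed $x_3$, apply the two-dimensional bound
\begin{equation*}
\|g\|_{L^\infty(\mathbb{R}^2)}\lesssim \|g\|^{\frac14}\|\p_1 g\|^{\frac14}\|\p_2 g\|^{\frac14}\|\p_{12} g\|^{\frac14},
\end{equation*}
which itself follows by applying the 1D inequality twice together with Cauchy--Schwarz. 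We apply it to both $f$ and $\p_3 f$. A final Hölder in $x_3$ with four factors each in $L^8_{x_3}$ turns the products into $L^2$ norms on $\mathbb{R}^3$ and gives the stated eight factors with exponent $\frac18$.

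\textbf{Second inequality.} For fixed $x_i$, Hölder in $(x_j,x_k)$ gives
\begin{equation*}
\int|fgh|\,dx_jdx_k\le \|f\|_{L^2_{jk}}\|g\|_{L^2_{jk}}\|h\|_{L^\infty_{jk}}.
\end{equation*}
Integrating in $x_i$ and using Hölder then yields
\begin{equation*}
\int|fgh|\,dx\le \|f\|_{L^2}\,\sup_{x_i}\|g\|_{L^2_{jk}}\,\|h\|_{L^2_{x_i}L^\infty_{jk}}.
\end{equation*}
For the factor of $g$, apply the 1D argument to $\|g(x_i)\|_{L^2_{jk}}^2$ through its $x_i$-derivative. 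This gives $\sup_{x_i}\|g\|_{L^2_{jk}}^2\lesssim \|g\|_{L^2}\|\p_i g\|_{L^2}$. For the factor of $h$, use the 2D bound in $(x_j,x_k)$ followed by Hölder in $x_i$.

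\textbf{Third inequality.} Write $F=\sup_{x_1}|f|$, $G=\|g\|_{L^2_{x_1}}$ and $H=\|h\|_{L^2_{x_1}}$, all functions of $(x_2,x_3)$. Cauchy--Schwarz in $x_1$ gives $\int|fgh|\le \int FGH\,dx_2dx_3$. Then, for each $x_2$, bound $H$ by its supremum over $x_3$ and apply Cauchy--Schwarz in $x_3$ to $F$ and $G$. This gives
\begin{equation*}
\int FGH\,dx_2dx_3\le \|F\|_{L^2_{23}}\,\sup_{x_2}\|G\|_{L^2_{x_3}}\,\|\sup_{x_3}H\|_{L^2_{x_2}}.
\end{equation*}
Each factor is handled by one 1D inequality, in $x_1$, $x_2$ and $x_3$ respectively. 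For the first and third factors we also use Minkowski to pull the supremum inside the $L^2$ norm. This gives $\|f\|^{\frac12}\|\p_1 f\|^{\frac12}$, $\|g\|^{\frac12}\|\p_2 g\|^{\frac12}$ and $\|h\|^{\frac12}\|\p_3 h\|^{\frac12}$.

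\textbf{Fourth inequality.} For fixed $x_i$, bound the $(x_j,x_k)$-integral by
\begin{equation*}
\|f\|_{L^2_{jk}}\|h\|_{L^2_{jk}}\|g\|_{L^\infty_{jk}}\|v\|_{L^\infty_{jk}}.
\end{equation*}
Then take the supremum in $x_i$ of the two $L^2_{jk}$ norms, which is handled exactly as the factor of $g$ in the second inequality. Place $g$ and $v$ in $L^2_{x_i}L^\infty_{jk}$ by Cauchy--Schwarz in $x_i$, and finish with the 2D bound and Hölder as before.

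The only delicate step is this bookkeeping of mixed-norm orders, so that Minkowski is always applied in the admissible direction $L^\infty L^2\le L^2L^\infty$. I expect it to be the main, though mild, obstacle. The rest is routine.
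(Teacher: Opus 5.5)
Your argument is correct and is essentially the standard one. The paper omits the proof and defers to \cite[Lemma 1.2]{Wu2021Advance}, whose proof is exactly this slicing argument: it combines the one-dimensional bound $\|g\|_{L^\infty(\mathbb{R})}\lesssim\|g\|_{L^2}^{1/2}\|g'\|_{L^2}^{1/2}$ (together with its variant $\sup_{x_i}\|g\|_{L^2_{x_jx_k}}^2\lesssim\|g\|_{L^2}\|\p_i g\|_{L^2}$), H\"older's inequality in mixed norms, and Minkowski's inequality in the direction $L^\infty L^2\le L^2L^\infty$.
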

			 One can follow the idea in \cite[Lemma 1.2]{Wu2021Advance} to establish the inequalities $\eqref{ie:Sobolev}_1$-$\eqref{ie:Sobolev}_4$. Here we omit the proof of this lemma.
		
         \subsection{Vertical derivative estimates}
         It is well-known that the norm $ \|f\|_{H^m}$ is equivalent to the norms $\|f\|_{H_{tan}^m}$ and $\|  \p_3^m f\|_{L^2}$. 
        In this subsection, we will first establish the estimates for the vertical derivative of velocity, density and magnetic field.
  This part poses difficulties, since the low order dissipation for density and the lack of dissipation in the $x_3$ direction create obstacles in the treatment of nonlinear terms. 
  Specifically, we need to
deal with the  two types of challenging terms which clearly cannot be controlled by $\sqrt{\me(t)} \md(t)$. The first type is as follows
\beqq \bal
-\i \du \, |\p_3^m \vro|^2 \, dx~ {\rm and}~\i \nabla_h \cdot u_h \, |\p_3^m \vro|^2 \, dx,
\dal \deqq
which both contain the highest-order derivatives of density and are estimated in Lemma \ref{lemm:Nlinear1} and Lemma \ref{lemm:Nlinear2} respectively.
The second type 
involves integrals where contain the highest-order derivatives of magnetic field:
\beqq
-\i \du \, |\p_3^m b_h|^2 \, dx, ~ \i \p_2 u_2 \, |\p_3^m b_h|^2 \, dx ~ {\rm and}~  \i \p_2 u_1 \, \p_3^m b_1 \, \p_3^m b_2 \, dx.
\deqq
The first and second term have already been estimated in the process of estimating the first type, see \eqref{Nlinear1-01-2} in Lemma \ref{lemm:Nlinear1} and \eqref{Nlinear2-01-2}-\eqref{Nlinear2-01-3} in  Lemma \ref{lemm:Nlinear2} respectively, and refer to Lemma \ref{lemm:Nlinear3} for the estimate of the last term.
 
\subsubsection{Preparatory tools}
First of all, we estimate the former term in the first type, which contains the highest-order vertical derivatives of density. 
 In this case, we should exploit the energy structure of $\p_3 \vro$ and $\du$ as well as use the cancellation mechanism to overcome the loss of vertical derivative. 
\begin{lemm}\label{lemm:Nlinear1}
     Under the assumption \eqref{assumption}, for any smooth solution $(\vro ,u, b)$ of equation \eqref{eqr}, it holds
     \beq\label{Nlinear1-01-1} \bal
  -\i \du \, |\p_3^m \vro|^2 \, dx 
\le  &\; \f{d}{dt} \i \du \, (\p_3^{m} \vro - \p_3^m b_2) \, \p_3^{m-1} u_3 \, dx  +\f{d}{dt} \i \vro \, |\p_3^m b_2 |^2\, dx\\
&\; + \f{d}{dt}  \i  (1+ \vro) \vro \,|\p_3^m u_2|^2  \, dx   - 2 \f{d}{dt}  \i (1+ \vro) \, \vro \,  \p_3^{m-1} \p_2 u_3 \, \p_3^m  u_2 \, dx  \\
&\;  - \f12\f{d}{dt} \i \vro^2 \, |\p_3^{m} \vro|^2 \, dx +C\sqrt{\me(t)} \md(t),
     \dal \deq
    and
     \beq\label{Nlinear1-01-2} \bal
   - \i \du \, |\p_3^m b_h|^2 \, dx 
  \le &\; \f{d}{dt} \i \vro \, |\p_3^m b_h |^2\, dx+ \f{d}{dt}  \i  (1+ \vro) \vro \,|\p_3^m u_h|^2  \, dx \\
&\; - 2 \f{d}{dt}  \i (1+ \vro) \, \vro \, \p_3^m u_h \cdot \p_3^{m-1} \nabla_h u_3  \, dx + C \sqrt{\me(t)} \md(t).
     \dal \deq
     \end{lemm}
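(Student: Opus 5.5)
The plan follows the four iterations sketched in Section \ref{section-approach}. Throughout, a term counts as \emph{good} if it can be bounded by $\sqrt{\me(t)}\md(t)$ via Lemma \ref{lemm:sobolev-ie}. A term is good whenever at least two of its factors carry dissipative quantities, namely $\nabla_h u,\ \du,\ \p_1 b$ in $H^m$ or $\nabla_h\vro,\ \p_2 b$ in $H^{m-1}$, and the remaining factors are energy-level quantities in $H^m$.

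\textbf{Step 1 (trade density for the magnetic field).} Write $\p_3^m\vro=\p_3^{m-1}\p_3\vro$ and use the third component of $\eqref{eqr}_2$,
\[
\p_3\vro=-\p_t u_3+\p_1^2u_3+\p_2^2u_3+\p_3\du+\p_2 b_3-\p_3 b_2+(F_2)_3 ,
\]
only on one of the two factors in $-\i\du\,|\p_3^m\vro|^2\,dx$.
\begin{itemize}
\item The $\p_t u_3$ piece is moved into $\f{d}{dt}\i\du\,\p_3^m\vro\,\p_3^{m-1}u_3\,dx$. The time derivatives that then fall on $\du$ and on $\p_3^m\vro$ are replaced via $\eqref{eqr}_1$. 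After integrating by parts, $\p_3^m\du\,\p_3^{m-1}u_3$ is good because of the $\du$ factor.
\item The pieces $\p_1^2u_3$, $\p_2^2u_3$, $\p_3\du$ and $\p_2 b_3$ are good after at most one horizontal integration by parts. Each of them carries a horizontal derivative or $\du$, which pairs with the dissipative factor $\du$.
\item The $-\p_3 b_2$ piece leaves $\i\du\,\p_3^m\vro\,\p_3^m b_2\,dx$. Applying the same substitution to $\p_3^m\vro$ once more converts it into $-\i\du\,|\p_3^m b_2|^2\,dx$, plus good terms, plus the time-derivative correction $-\f{d}{dt}\i\du\,\p_3^m b_2\,\p_3^{m-1}u_3\,dx$. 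This accounts for the first term of \eqref{Nlinear1-01-1}.
\item Inside $(F_2)_3$, the dangerous part is $-\vro\,\p_3\vro$, which produces $\i\vro\,\du\,|\p_3^m\vro|^2\,dx$. Substitute $\du=-\p_t\vro-u\cdot\nabla\vro-\vro\du$ and pull the time derivative onto $\p_3^m\vro$ using $\eqref{eqr}_1$. The transport terms cancel after integration by parts, and this yields $-\f12\f{d}{dt}\i\vro^2|\p_3^m\vro|^2\,dx$ plus terms that are cubic and small.
\end{itemize}
The other parts of $(F_2)_3$ (the viscous terms with the $\f{\vro}{1+\vro}$ weight, and the magnetic quadratic terms) are handled the same way and are good.

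\textbf{Step 2 (the magnetic quadratic term).} This proves \eqref{Nlinear1-01-2}, with the $b_2$ case feeding back into Step 1. Write $-\du=\p_t\vro+u\cdot\nabla\vro+\vro\du$ to get
\[
-\i\du\,|\p_3^m b_h|^2\,dx=\f{d}{dt}\i\vro\,|\p_3^m b_h|^2\,dx-2\i\vro\,\p_3^m b_h\cdot\p_3^m\p_t b_h\,dx+\cdots .
\]
Then replace $\p_t b_h$ using $\eqref{eqr}_3$:
\begin{itemize}
\item $-\p_1^2 b_h$ is good.
\item $e_2\du$ is good.
\item $-\p_2 u_h$ gives $\i\vro\,\p_3^m b_h\cdot\p_3^m\p_2 u_h\,dx$. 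Integrating by parts in $x_2$ yields $\p_2\vro$ and $\p_2 b$ factors, both of which are dissipated, so this term is good.
\item The transport term $u\cdot\nabla b$ is handled by symmetrization.
\item The stretching term gives the critical integral $-2\i\vro\,\p_3^m b_h\cdot(b\cdot\nabla)\p_3^m u_h\,dx$.
\end{itemize}
Integrating this critical integral by parts using ${\rm div}\,b=0$ gives $2\i\vro\,\p_3^m u_h\cdot(b\cdot\nabla)\p_3^m b_h\,dx$, which loses a vertical derivative, plus a good term.

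\textbf{Step 3 (nonlinear cancellation; the main obstacle).} Apply $\p_3^m$ to the horizontal components of $\eqref{eqr}_2$ and test against $2(1+\vro)\vro\,\p_3^m u_h$. The left side gives
\[
\f{d}{dt}\i(1+\vro)\vro\,|\p_3^m u_h|^2\,dx
\]
modulo terms carrying $\p_t\vro$, which are good. The Lorentz term $\f{1}{1+\vro}b\cdot\nabla b_h$ inside $(F_2)_h$ produces exactly $2\i\vro\,\p_3^m u_h\cdot(b\cdot\nabla)\p_3^m b_h\,dx$. This is precisely the derivative-losing term from Step 2, so the two cancel; I expect the weight $(1+\vro)\vro$ was chosen to make this match exact.

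The remaining terms from this test are handled as follows.
\begin{itemize}
\item The viscous terms give dissipative quadratic contributions, which are good.
\item $-\nabla b_2+\p_2 b$ contributes $\p_3^m\p_2 b_h$, which is good after moving $\p_2$.
\item The pressure term $-(1+\vro)^2\vro\,\p_3^m\nabla_h\vro\cdot\p_3^m u_h$ is the hardest remaining piece. Integrating by parts horizontally moves $\nabla_h$ onto $\p_3^m u_h$, where it is dissipated, and onto $\vro$, which is also dissipated. What remains is $\i(1+\vro)^2\vro\,\p_3^m\vro\,\p_3^m\nabla_h\cdot u_h\,dx$. Writing $\nabla_h\cdot u_h=\du-\p_3u_3$, the $\p_3^{m+1}u_3$ piece is transformed using the $x_3$ equation once more. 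This generates the correction $-2\f{d}{dt}\i(1+\vro)\vro\,\p_3^{m-1}\nabla_h u_3\cdot\p_3^m u_h\,dx$, after which all leftovers are good.
\end{itemize}

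Collecting the time-derivative terms from Steps 1–3 with their signs gives \eqref{Nlinear1-01-1} and \eqref{Nlinear1-01-2}. The main risk is the pressure-generated term in Step 3: it requires tracking carefully that every leftover term retains either $\nabla_h\vro$ at order $\le m-1$ or $\nabla_h u$ at order $\le m$, and never $\p_3^m\vro$ twice without a dissipative coefficient.
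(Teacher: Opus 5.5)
\textbf{Genuine gap: the pressure term in Step 3 does not close.} Your Steps 1--2 and the cancellation of the derivative-losing Lorentz term follow the paper. The treatment of the pressure term in Step 3, however, does not work.

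Integrating $\int_{\mathbb{R}^3}(1+\vro)^2\vro\,\p_3^m\nabla_h\vro\cdot\p_3^m u_h\,dx$ by parts horizontally produces two \emph{cubic} integrals. Both contain $\p_3^m\vro$, a factor with no dissipation at all, since any horizontal derivative of it has order $m+1$. When $\nabla_h$ hits the weight, the leading part is $\int_{\mathbb{R}^3}\p_3^m\vro\,\nabla_h\vro\cdot\p_3^m u_h\,dx$; the weight's factor $\vro$ is consumed, so this term is not quartic. When $\nabla_h$ hits $\p_3^m u_h$, you get $\int_{\mathbb{R}^3}\vro\,\p_3^m\vro\,\p_3^m\nabla_h\cdot u_h\,dx$.

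In Lemma \ref{lemm:sobolev-ie} the contributions of the factors are then limited as follows:
\begin{itemize}
\item $\p_3^m\vro$ must sit in $L^2$ and contributes no dissipation;
\item $\p_3^m u_h$ contributes at most $\md^{1/4}$, since only one horizontal derivative is available;
\item $\nabla_h\vro$ contributes at most $\md^{1/2}$;
\item $\|\vro\|_{L^\infty}$ contributes at most $\md^{3/8}$, because the low horizontal frequencies of $\vro$ are undamped.
\end{itemize}
You therefore reach only $\md^{3/4}$, respectively $\md^{7/8}$, which cannot be closed with $\int_0^t\md\,d\tau\le\delta$. These are exactly the terms the paper refuses to call good. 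Compare $-m\int_{\mathbb{R}^3}\p_3u_h\cdot\nabla_h\p_3^{m-1}\vro\,\p_3^m\vro\,dx$ in \eqref{p3m-38-1}, or $\int_{\mathbb{R}^3}\vro\,\p_3^m\vro\,\p_3^m\du\,dx$ in the estimate of $IV_5$, which feeds back $-\f12\int_{\mathbb{R}^3}\du\,|\p_3^m\vro|^2dx$.

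Writing $\nabla_h\cdot u_h=\du-\p_3u_3$ does not help. The quantities $\p_3^m\du$ and $\p_3^{m+1}u_3$ are equally dissipative, and the defect lies in $\vro\,\p_3^m\vro$, so the $\du$-piece you leave behind is not good. Moreover, \eqref{Nlinear1-01-1} involves only the $u_2$-component, where you get $\p_3^m\p_2u_2$ rather than a horizontal divergence. The corrector you quote is also not what your manipulation produces.

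\textbf{How the paper handles it.} The paper avoids the horizontal integration by parts altogether. It keeps $\p_2$ on the density and inserts the third momentum equation, written as $(1+\vro)\p_3\vro=-\p_tu_3+\f{1}{1+\vro}(\p_1^2u_3+\p_2^2u_3+\p_3\du+\p_2b_3-\p_3b_2)+\cdots$, into $\p_3^m\p_2\vro=\p_3^{m-1}\p_2(\p_3\vro)$. The pieces are then treated as follows.
\begin{itemize}
\item The $\p_tu_3$-part gives the corrector $-\f{d}{dt}\int_{\mathbb{R}^3}(1+\vro)\vro\,\p_3^{m-1}\p_2u_3\,\p_3^mu_2\,dx$, which explains the factor $-2$ in \eqref{Nlinear1-01-1}.
\item The piece with $\p_3^m\p_tu_2$ is handled by moving one $\p_3$ across and using $\|\p_tu_h\|_{H^{m-1}}\lesssim\sqrt{\md}$ from \eqref{Nlinear1-help-2}.
\item The linear terms are handled by moving $\p_2$ back onto $(1+\vro)\vro\,\p_3^mu_2$.
\item The commutators keep the extra factor $\vro$ from the weight, so they are quartic.
\end{itemize}
No undissipated $\p_3^m\vro$ ever survives in a cubic term.

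\textbf{Two smaller slips.} In Step 2, integrating $\int_{\mathbb{R}^3}\vro\,\p_3^mb_h\cdot\p_3^m\p_2u_h\,dx$ by parts in $x_2$ creates $\p_2\p_3^mb_h$. This has order $m+1$ and is uncontrolled, since $\p_2b$ lies only in $H^{m-1}$; the term is good as it stands without that step. In Step 1, $\p_t\du$ must be replaced using the momentum equation, not $\eqref{eqr}_1$.
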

\begin{proof}
     Substituting the equation of $\p_3 \vro$
         \beqq \bal
         \p_3 \vro = &\;  -\p_t u_3 +\p_1^2 u_3 + \p_2^2 u_3+ \p_3 \du+\p_2 b_3 - \p_3 b_2 + (F_2)_3 \\
         = &\; -\p_t u_3 - \p_3 b_2 - \vro\, \p_3 \vro + \f{1}{1+\vro} \Big\{\p_1^2 u_3 + \p_2^2 u_3+ \p_3 \du+\p_2 b_3+ b \cdot \nabla b_3 - \f12 \p_3 (|b|^2)\Big\} \\
         &\; +\f{\vro}{1+\vro} \p_3 b_2 - u\cdot \nabla u_3,
         \dal \deqq
         into $-\i \du \, |\p_3^m \vro|^2 \, dx$, we have
         \beq \label{Nlinear1-02} \bal
         &\; -\i \du \, |\p_3^m \vro|^2 \, dx \\
         = &\; \f{d}{dt} \i \du \, \p_3^{m} \vro \, \p_3^{m-1} u_3 \, dx   - \i \p_t \du \, \p_3^{m} \vro \, \p_3^{m-1} u_3  \, dx -  \i \du \, \p_3^{m} \p_t \vro \, \p_3^{m-1} u_3  \, dx   \\
         &\; + \i \du \, \p_3^{m} \vro \, \p_3^{m} b_2 \,dx  +  \i 
         \vro \, \du \, |\p_3^{m} \vro|^2 \, dx + \i \du \, \p_3^{m} \vro \, (\p_3^{m-1} (\vro \,\p_3 \vro) - \vro \, \p_3^m \vro) \, dx \\
         &\; - \i \du \, \p_3^{m} \vro \, \p_3^{m-1} \Big\{\f{1}{1+\vro} (\p_1^2 u_3 + \p_2^2 u_3+ \p_3 \du+\p_2 b_3) \Big\} \, dx \\
         &\; - \i \du \, \p_3^{m} \vro \, \p_3^{m-1} \Big\{\f{1}{1+\vro} (b \cdot \nabla b_3 - \f12 \p_3 (|b|^2)) +\f{\vro}{1+\vro} \p_3 b_2- u\cdot \nabla u_3\Big\} \, dx\\
        := &\;  \f{d}{dt} \i \du \, \p_3^{m} \vro \, \p_3^{m-1} u_3 \, dx  
        +\sum_{i=1}^{7} I_{i}.
         \dal \deq
        {\bf{Estimate of the term $I_1$.}} Substituting the equation $\eqref{eqr}_2$ into the term $I_1$ and due to the condition ${\rm div} b=0$, we decompose it as follows
        \beqq \bal
        I_1 = &\;- \i {\rm div}(\p_1^2 u + \p_2^2 u + \nabla \du) \, \p_3^{m} \vro \, \p_3^{m-1} u_3  \, dx  + \i \Delta b_2 \, \p_3^{m} \vro \, \p_3^{m-1} u_3  \, dx \\
        &\; +\i \Delta \vro \, \p_3^{m} \vro \, \p_3^{m-1} u_3  \, dx  -\i {\rm div} F_2 \, \p_3^{m} \vro \, \p_3^{m-1} u_3  \, dx\\
        := &\; \sum_{i=1}^{4} I_{1,i}.
        \dal \deqq
        Using the anisotropic type inequality \eqref{ie:Sobolev} and integrating by parts, we can check that
        \begin{align*}
    I_{1,1} \lesssim &\; \| \p_3^m \vro \|_{L^2} \| \p_3^{m-1} u_3 \|_{L^2}^{\f14} \| \p_1 \p_3^{m-1} u_3 \|_{L^2}^{\f14} \| \p_3^{m} u_3 \|_{L^2}^{\f14} \| \p_1 \p_3^{m} u_3 \|_{L^2}^{\f14}\\
    &\; \times \| {\rm div}(\p_1^2 u + \p_2^2 u  + \nabla \du)\|_{L^2}^{\f12} \| \p_2 {\rm div}(\p_1^2 u + \p_2^2 u  + \nabla \du)\|_{L^2}^{\f12}\\
    \lesssim &\;  \sqrt{\me(t)} \md(t),\\
    I_{1,2} = &\;- \i \p_3^{m-1} \vro \,  ( \Delta b_2 \, \p_3^{m} u_3+\p_3 \Delta b_2 \, \p_3^{m-1} u_3 )\, dx \\
     \lesssim &\; \| \p_3^{m-1} \vro \|_{L^2}^{\f12} \| \p_2 \p_3^{m-1} \vro \|_{L^2}^{\f12} (\| \p_3^{m} u_3 \|_{L^2}^{\f12} \|  \p_3^{m+1} u_3 \|_{L^2}^{\f12} \| \Delta b_2 \|_{L^2}^{\f12} \| \p_1 \Delta b_2\|_{L^2}^{\f12}\\ 
     &\;  +  \| \p_3^{m-1} u_3 \|_{L^2}^{\f12} \| \p_3^{m} u_3 \|_{L^2}^{\f12} \| \p_3 \Delta b_2 \|_{L^2}^{\f12} \| \p_1  \p_3 \Delta b_2 \|_{L^2}^{\f12} )\\
     \lesssim &\;  \sqrt{\me(t)} \md(t).
    \end{align*}
    If $m=3$,  using the anisotropic type inequality \eqref{ie:Sobolev} and integrating by parts, we have
    \begin{align*}
    I_{1,3} = &\;   \f12 \i \p_3(\p_3^{2} \vro)^2 \, \p_3^{2} u_3  \, dx + \i \Delta_h \vro \, \p_3^{3} \vro \, \p_3^{2} u_3  \, dx \\
    = &\; -\f12 \i (\p_3^{2} \vro)^2 \, \p_3^{3} u_3  \, dx+\i \Delta_h \vro \, \p_3^{3} \vro \, \p_3^{2} u_3  \, dx \\
    \lesssim &\; \|\p_3^{2} \vro\|_{L^2} \| \p_1\p_3^{2} \vro \|_{L^2}^{\f12}    \| \p_2\p_3^{2} \vro \|_{L^2}^{\f12}  \| \p_3^3 u_3\|_{L^2}^{\f12} \| \p_3^4 u_3\|_{L^2}^{\f12} \\
    &\; + \|\p_3^{3} \vro\|_{L^2} \| \Delta_h \vro \|_{L^2}^{\f12}    \| \p_3 \Delta_h \vro  \|_{L^2}^{\f12}  \| \p_3^2 u_3\|_{L^2}^{\f14} \| \nabla_h \p_3^2 u_3\|_{L^2}^{\f12} \| \nabla_h^2 \p_3^2 u_3\|_{L^2}^{\f14}  \\
    \lesssim &\;  \sqrt{\me(t)} \md(t),
    \end{align*}
    and if  $m \ge 4$, integrating by parts, we have
    \beqq \bal
     I_{1,3} = &\; -\i \p_3^{m-1} \vro \,  (\Delta \vro \, \p_3^{m} u_3 + \p_3 \Delta \vro \, \p_3^{m-1} u_3 )\, dx \\
     \lesssim &\; \| \p_3^{m-1} \vro \|_{L^2}^{\f12} \| \p_2 \p_3^{m-1} \vro \|_{L^2}^{\f12} (\| \p_3^{m} u_3 \|_{L^2}^{\f12} \|\p_3^{m+1} u_3 \|_{L^2}^{\f12} \| \Delta \vro\|_{L^2}^{\f12} \| \p_1 \Delta \vro\|_{L^2}^{\f12} \\
     &\; + \| \p_3^{m-1} u_3 \|_{L^2}^{\f12} \| \p_3^{m} u_3 \|_{L^2}^{\f12} \| \p_3  \Delta \vro\|_{L^2}^{\f12} \| \p_{13} \Delta \vro\|_{L^2}^{\f12}) \\
     \lesssim &\;  \sqrt{\me(t)} \md(t),
        \dal \deqq
        which, together with above estimate, yields that
        \beqq 
  I_{1,3} \lesssim \sqrt{\me(t)} \md(t).
        \deqq
        Now we estimate the term $I_{1,4}$. 
\beqq \bal
I_{1,4} = &\; \i {\rm div} (u\cdot \nabla u) \, \p_3^{m} \vro \, \p_3^{m-1} u_3  \, dx +\i {\rm div} ( \vro \, \nabla \vro) \, \p_3^{m} \vro \, \p_3^{m-1} u_3  \, dx  \\
&\; +\i {\rm div} \Big\{ \f{\vro}{1+\vro} (\p_1^2 u+ \p_2^2 u  + \nabla \du + \p_2 b - \nabla b_2) \Big\} \, \p_3^{m} \vro \, \p_3^{m-1} u_3  \, dx  \\
&\;-\i {\rm div} \Big\{ \f{1}{1+\vro} ( b\cdot \nabla b - \f12 \nabla(|b|^2)) \Big\} \, \p_3^{m} \vro \, \p_3^{m-1} u_3  \, dx.
\dal \deqq
We only give the first and second term on the right-hand side, and the other terms can be estimated similarly. 
By Lemma \ref{lemm:sobolev-ie}, we can check 
\beqq \bal 
&\; \i {\rm div} (u\cdot \nabla u) \, \p_3^{m} \vro \, \p_3^{m-1} u_3  \, dx \\
\lesssim &\;  \| \p_3^m \vro\|_{L^2} \| \p_3^{m-1} u_3 \|_{L^2}^{\f14} \| \p_3^{m} u_3 \|_{L^2}^{\f14} \| \p_1 \p_3^{m-1} u_3 \|_{L^2}^{\f14} \| \p_1 \p_3^{m} u_3 \|_{L^2}^{\f14} ( \| \nabla \du \|_{L^2}^{\f12} \| \p_2 \nabla \du \|_{L^2}^{\f12} \| u \|_{L^{\infty}} \\
&\; + \| \nabla u\|_{L^2}^{\f12} \| \p_2 \nabla u\|_{L^2}^{\f12} \|( \nabla u, \p_3 \nabla u)\|_{L^2}^{\f14} \|\nabla_h( \nabla u, \p_3 \nabla u)\|_{H_{tan}^1}^{\f34}) \\
\lesssim &\; \sqrt{\me(t)} \md(t),
\dal \deqq
and integrating by parts, by Lemma \ref{lemm:sobolev-ie}, we have
\begin{align*}
&\; \i {\rm div} ( \vro \, \nabla \vro) \, \p_3^{m} \vro \, \p_3^{m-1} u_3  \, dx 
=  \i  ( \vro \, \Delta \vro + |\nabla \vro|^2) \, \p_3^{m} \vro \, \p_3^{m-1} u_3  \, dx \\
= &\; \i   \vro \, \Delta \vro  \, \p_3^{m} \vro \, \p_3^{m-1} u_3  \, dx  -  \i \p_3^{m-1} \vro \, ( |\nabla \vro|^2 \, \p_3^{m} u_3  +  2 \nabla \vro\,  \p_3 \nabla \vro \, \p_3^{m} u_3 ) \, dx \\
\lesssim &\;  \| \p_3^m \vro\|_{L^2} \| \p_3^{m-1} u_3 \|_{L^2}^{\f14} \| \p_3^{m} u_3 \|_{L^2}^{\f14} \| \p_1 \p_3^{m-1} u_3 \|_{L^2}^{\f14} \| \p_1 \p_3^{m} u_3 \|_{L^2}^{\f14}  \| \Delta \vro\|_{L^2}^{\f12} \| \p_2 \Delta \vro \|_{L^2}^{\f12} \| \vro\|_{L^{\infty}} \\
&\; +  \| \p_3^{m-1} \vro\|_{L^2}^{\f12}  \| \p_1 \p_3^{m-1} \vro\|_{L^2}^{\f12} \| \nabla \vro\|_{L^2}^{\f14}  \| \p_2 \nabla \vro\|_{L^2}^{\f14} \| \p_3 \nabla \vro\|_{L^2}^{\f14} \| \p_{23} \nabla \vro\|_{L^2}^{\f14}\\
&\; \times ( \| \p_3^{m-1} u_3 \|_{L^2}^{\f14} \| \p_3^{m} u_3 \|_{L^2}^{\f14} \| \p_1 \p_3^{m-1} u_3 \|_{L^2}^{\f14} \| \p_1 \p_3^{m} u_3 \|_{L^2}^{\f14} \| \p_3 \nabla \vro\|_{L^2}^{\f12} \| \p_2 \p_3 \nabla \vro\|_{L^2}^{\f12} \\
&\; + \| \p_3^{m} u_3 \|_{L^2}^{\f12} \| \p_3^{m+1} u_3 \|_{L^2}^{\f12} \| \nabla \vro\|_{L^2}^{\f14} \| \nabla_h \nabla \vro\|_{L^2}^{\f12} \| \nabla_h^2 \nabla \vro\|_{L^2}^{\f14}) \\
\lesssim &\; \sqrt{\me(t)} \md(t).
\end{align*} 
Thus, we can obtain that
\beqq 
 I_{1,4} \lesssim \sqrt{\me(t)} \md(t),
\deqq
which, together with the estimates from $I_{1,1}$
to $I_{1,3}$, yields that
\beq \label{Nlinear1-03}
 I_{1} \lesssim \sqrt{\me(t)} \md(t).
\deq
{\bf{Estimate of the term $I_2$.}} Substituting equation $\eqref{eqr}_1$ into term $I_2$, and integrating by parts, we have
\beqq \bal
I_2 = &\;  \i \du \, \p_3^m \du \, \p_3^{m-1} u_3 \, dx +  \i \du \, \p_3^m ( \vro\, \du ) \, \p_3^{m-1} u_3 \, dx  \\
&\; + \i \du \, \p_3^m (u \cdot \nabla \vro) \, \p_3^{m-1} u_3 \, dx  \\
= &\;  \i \du \, \p_3^m \du \, \p_3^{m-1} u_3 \, dx + \sum_{ 0 \le l \le m} C_{m}^{l}  \i \du \, \p_3^l \vro\, \p_3^{m-l}  \du  \, \p_3^{m-1} u_3 \, dx  \\
&\; - \sum_{ 0 \le l \le m-1} C_{m-1}^{l} \i   \p_3^{l} u \cdot \nabla \p_3^{m-1-l} \vro \, ( \p_3 \du\,  \p_3^{m-1} u_3 + \du \,  \p_3^{m} u_3) \, dx.
\dal \deqq
By Lemma \ref{lemm:sobolev-ie}, we can check 
\begin{align} \label{Nlinear1-04} 
I_2 \lesssim &\; \| \p_3^{m-1} u_3 \|_{L^2}^{\f14} \| \p_3^{m} u_3 \|_{L^2}^{\f14} \| \p_1 \p_3^{m-1} u_3 \|_{L^2}^{\f14} \| \p_1 \p_3^{m} u_3 \|_{L^2}^{\f14} \Big( \| \p_3^m \du\|_{L^2} \| \du \|_{L^2}^{\f12} \| \p_2 \du \|_{L^2}^{\f12} (1+ \| \vro\|_{L^{\infty}}) \notag\\
&\; + \sum_{1\le l \le m}  \| \p_3^l \vro\|_{L^2} \| \p_3^{m-l} \du \|_{L^2}^{\f12} \| \p_2  \p_3^{m-l} \du \|_{L^2}^{\f12} \| \du \|_{L^{\infty}}  + \|  \p_3 \du \|_{L^2}^{\f12} \| \p_2 \p_3 \du \|_{L^2}^{\f12} \notag\\
&\;\times (\| u\|_{L^{\infty}} \| \p_3^{m-1} \nabla \vro \|_{L^2}  + \sum_{1\le l \le m-1} \| \p_3^{l} u \|_{L^2}^{\f14} \| \nabla_h \p_3^{l} u\|_{L^2}^{\f12} \| \nabla_h^2 \p_3^{l} u_3 \|_{L^2}^{\f14} \| \p_3^{m-1-l} \nabla \vro \|_{L^2}^{\f12} \| \p_3^{m-l} \nabla \vro \|_{L^2}^{\f12}) \Big)\notag\\
&\; + \| \p_3^{m} u_3 \|_{L^2}^{\f12} \| \p_1 \p_3^{m-1} u_3 \|_{L^2}^{\f12} \|\du \|_{L^2}^{\f14} \| \nabla_h \du \|_{L^2}^{\f12} \| \nabla_h^2 \du \|_{L^2}^{\f14}\notag\\
&\;\times (\| u\|_{L^{\infty}} \| \p_3^{m-1} \nabla \vro \|_{L^2}  + \sum_{1\le l \le m-1} \| \p_3^{l} u \|_{L^2}^{\f14} \| \nabla_h \p_3^{l} u\|_{L^2}^{\f12} \| \nabla_h^2 \p_3^{l} u_3 \|_{L^2}^{\f14} \| \p_3^{m-1-l} \nabla \vro \|_{L^2}^{\f12} \| \p_3^{m-l} \nabla \vro \|_{L^2}^{\f12}) \notag\\
\lesssim &\; \sqrt{\me(t)} \md(t).
\end{align}
{\bf{Estimate of the term $I_3$.}}
 Due to the lower dissipation of density, $I_3$ cannot be controlled by $\sqrt{\me(t)} \md(t)$ by the anisotropic inequalities \eqref{ie:Sobolev}. Our method is to substitute equation of $\p_3 \vro$ into  term $I_3$, which yields that
\beq \label{Nlinear1-05} \bal
I_3 = &\;- \f{d}{dt} \i \du \, \p_3^m b_2 \, \p_3^{m-1} u_3 \, dx  +   \i  \p_t \du \,  \p_3^m b_2 \, \p_3^{m-1} u_3 \, dx +   \i  \du \, \p_t \p_3^m b_2 \, \p_3^{m-1} u_3 \, dx \\
&\; - \i \du \, |\p_3^m b_2|^2 \, dx 
+ \i \du \, \p_3^m b_2 \, \p_3^{m-1} ( \p_1^2 u_3 + \p_2^2 u_3  + \p_3 \du+\p_2 b_3 + (F_2)_3 ) \, dx \\
:= &\; - \f{d}{dt} \i \du \, \p_3^m b_2 \, \p_3^{m-1} u_3 \, dx +\sum_{i=1}^{4} I_{3,i}.
\dal \deq
Using the equation $\eqref{eqr}_1$ and the condition ${\rm div} b= 0$, we have
\beqq \bal
\p_t \du = &\; {\rm div} \left\{ \f{1}{1+\vro} (\p_1^2 u + \p_2^2 u + \nabla \du + \p_2 b -  \nabla b_2 + b \cdot \nabla b - \f12 \nabla(|b|^2)) - (1+\vro) \nabla \vro - u\cdot \nabla u \right\}  \\
= &\; \nabla( \f{1}{1+\vro} ) \left\{\p_1^2 u + \p_2^2 u + \nabla \du + \p_2 b -  \nabla b_2 + b \cdot \nabla b - \f12 \nabla(|b|^2)\right\} \\
&\; +  \f{1}{1+\vro} \left\{\p_1^2 \du + \p_2^2 \du + \Delta \du - \Delta b_2 + \nabla b \cdot \nabla b - \f12 \Delta (|b|^2) \right\} \\
&\; - \nabla \vro \, \nabla \vro  - (1+\vro) \Delta \vro - \nabla u  \cdot \nabla u - u \cdot \nabla \du.
\dal \deqq
Using the anisotropic type inequalities \eqref{ie:Sobolev},  we can check 
\beqq \bal
\| \p_t \du \|_{L^2} \lesssim \sqrt{\me(t)} + \sqrt{\md(t)}, ~~~
\| \nabla_h \p_t \du \|_{L^2} \lesssim  \sqrt{\md(t)}.
\dal \deqq
Using above estimate and the anisotropic type inequality \eqref{ie:Sobolev}, we have
\begin{align*}
I_{3,1} \lesssim &\; \| \p_3^m b_2 \|_{L^2}^{\f12} \| \p_1 \p_3^m b_2 \|_{L^2}^{\f12} \| \p_3^{m-1} u_3 \|_{L^2}^{\f12} \| \p_3^{m} u_3 \|_{L^2}^{\f12} \| \p_t \du \|_{L^2}^{\f12} \| \p_2 \p_t \du\|_{L^2}^{\f12} \\
\lesssim&\; \sqrt{\me(t)} \md(t),\\
I_{3,4} \lesssim  &\; \| \p_3^m b_2 \|_{L^2}^{\f12} \| \p_1 \p_3^m b_2 \|_{L^2}^{\f12} \| \du\|_{L^2}^{\f14} \| \p_2 \du\|_{L^2}^{\f14} \| \p_3 \du\|_{L^2}^{\f14} \| \p_{23} \du\|_{L^2}^{\f14} \\
&\; \times \|\p_3^{m-1} \{ \p_1^2 u_3 + \p_2^2 u_3 + \p_3 \du+\p_2 b_3+ (F_2)_3 \}\|_{L^2}\\
\lesssim&\; \sqrt{\me(t)} \md(t),
\end{align*}
which we have used the following estimate 
\beq\label{Nlinear1-help-1} \bal
&\; \|\p_3^{m-1} \left\{ \p_1^2 u_3 + \p_2^2 u_3  + \p_3 \du+\p_2 b_3+(F_2)_3 \right\} \|_{L^2} \\
= &\; \|\p_3^{m-1} \Big\{ \f{1}{1+\vro} (\p_1^2 u_3 + \p_2^2 u_3 + \p_3 \du+\p_2 b_3) +\f{\vro}{1+\vro} \p_3 b_2  - u\cdot \nabla u_3 - \vro \, \p_3 \vro \\
&\; + \f{1}{1+\vro} (b \cdot \nabla b_3 - \f12 \p_3 (|b|^2)) \Big\}\|_{L^2}\\
\lesssim &\; \sqrt{\md(t)} + \md(t)^{\f38}.
\dal \deq
It should be noticed that due to the lack of dissipation in the $x_2$ and $x_3$ directions for the magnetic field, the term $I_{3,3}$ still cannot be controlled by $\sqrt{\me(t)} \md(t)$. Here we substitute the equation of $\du$
\beqq
\du = - \p_t \vro - u \cdot \nabla \vro  - \vro \, \du 
\deqq
into this term, thus we have
\beq\label{Nlinear1-06}  \bal
I_{3,3} = &\; \f{d}{dt} \i \vro \, |\p_3^m b_2 |^2\, dx - 2 \i \vro \, \p_3^m b_2\, \p_t \p_3^m b_2 \, dx +  \i (u \cdot \nabla \vro + \vro \, \du) \, |\p_3^m b_2 |^2\, dx  \\
= &\; \f{d}{dt} \i \vro \, |\p_3^m b_2 |^2\, dx - 2 \i  \vro \, \p_3^m b_2\, \p_3^m \p_1^2 b_2  \, dx   - 2 \i  \vro \, \p_3^m b_2\, \p_3^m (-\du + \p_2 u_2 ) \, dx \\
&\; -2 \i \vro \, \p_3^m b_2 \, \p_3^m ( - u \cdot  \nabla b_2 - b_2 \, \du +  b \cdot \nabla u_2) \, dx + \i (u \cdot \nabla \vro + \vro \, \du) \, |\p_3^m b_2 |^2\, dx  \\
= &\; \f{d}{dt} \i \vro \, |\p_3^m b_2 |^2\, dx - 2 \i  \vro \, \p_3^m b_2\, \p_3^m \p_1^2 b_2  \, dx   - 2 \i  \vro \, \p_3^m b_2\, \p_3^m (-\du + \p_2 u_2 ) \, dx \\
&\; + 2 \sum_{ 1 \le  l \le m} C_{m}^{l} \i \vro \, \p_3^m b_2\, \p_3^{l} u \cdot \p_3^{m-l}  \nabla b_2 \, dx
+ 2 \i  \vro \, \p_3^m b_2\, \p_3^m (b_2 \, \du ) \, dx \\
&\;- 2 \i  \vro \, \p_3^m b_2\, b \cdot \p_3^m   \nabla u_2 \, dx - 2\sum_{ 1 \le  l \le m} C_{m}^{l} \i \vro \, \p_3^m b_2 \, \p_3^{l} b \cdot \p_3^{m-l}   \nabla u_2 \, dx   \\
:= &\; \f{d}{dt} \i \vro \, |\p_3^m b_2 |^2\, dx + \sum_{i=1}^{6} I_{3,3,i},
\dal \deq
where we have used the fact that
\beqq 
2 \i  \vro \, \p_3^m b_2\, u \cdot \p_3^m  \nabla b_2 \, dx + \i (u \cdot \nabla \vro + \vro \, \du) \, |\p_3^m b_2 |^2\, dx  =0.
\deqq
Integrating by parts and using the anisotropic type inequalities \eqref{ie:Sobolev}, it holds that
\begin{align*}
I_{3,3,1} = &\; 2 \i \p_3^m \p_1 b_2 \, (\p_1 \vro \, \p_3^{m} b_2 + \vro \, \p_1 \p_3^{m} b_2 ) \, dx  \\ 
\lesssim &\; \| \vro\|_{L^{\infty}} \| \p_1 \p_3^m b_2\|_{L^2}^2    + \| \p_3^{m} b_2\|_{L^2}^{\f12}  \| \p_1 \p_3^{m} b_2\|_{L^2}^{\f32}   \| \p_1 \vro\|_{L^2}^{\f14} \| \p_{13} \vro\|_{L^2}^{\f14}  \| \p_{12} \vro\|_{L^2}^{\f14} \| \p_{23}  \p_1\vro\|_{L^2}^{\f14}  \\
\lesssim &\; \sqrt{\me(t)} \md(t), \\
I_{3,3,3} \lesssim &\;( \sum_{1 \le l\le m-1 } \| \p_3^l u\|_{L^2}^{\f14} \| \p_3^{l+1} u\|_{L^2}^{\f14} \| \p_2 \p_3^l u\|_{L^2}^{\f14} \| \p_2 \p_3^{l+1} u\|_{L^2}^{\f14}   \|\nabla \p_3^{m-l} b_2 \|_{L^2}^{\f12} \|\p_1 \nabla \p_3^{m-l} b_2 \|_{L^2}^{\f12}  \\
&\; +  \| \p_3^m u\|_{L^2}^{\f12} \| \p_2 \p_3^m u\|_{L^2}^{\f12} \|\nabla b_2 \|_{L^2}^{\f14} \|\p_3 \nabla b_2 \|_{L^2}^{\f14}   \|\p_1 \nabla b_2 \|_{L^2}^{\f14} \|\p_{13} \nabla b_2 \|_{L^2}^{\f14} ) \\
&\; \times \| \vro \|_{L^2}^{\f14} \| \p_3 \vro \|_{L^2}^{\f14} \| \p_2 \vro \|_{L^2}^{\f14} \| \p_{23} \vro \|_{L^2}^{\f14} \| \p_3^m b_2\|_{L^2}^{\f12} \| \p_1 \p_3^m b_2\|_{L^2}^{\f12}\\
\lesssim &\; \sqrt{\me(t)} \md(t).
\end{align*}
Similarly, we can check that
\beqq 
I_{3,3,2} + I_{3,3,4} + I_{3,3,6}\lesssim \sqrt{\me(t)} \md(t).
\deqq
Then we deal with the difficult term $I_{3,3,5}$, which gives rise to the loss of vertical derivative. 
Integrating by parts, due to the condition ${\rm div} b=0$, we have
\beq\label{Nlinear1-equ} \bal
I_{3,3,5} = &\; 2 \i \p_3^m u_2 (\vro \, \p_3^m b_2 \, {\rm div} b + \vro \, b \cdot \nabla \p_3^m b_2 + \p_3^m b_2 \,b\cdot \nabla \vro ) \, dx \\
= &\;  2 \i \vro \, \p_3^m u_2 \, b \cdot \nabla \p_3^m b_2 \, dx + 2 \i \p_3^m u_2 \, \p_3^m b_2 \, b\cdot \nabla \vro \, dx.
\dal \deq
In order to solve this problem, we exploit the cancellation mechanism. More precisely, first of all, from the equation of $\eqref{eqr}_2$, we have
\beqq \bal
\p_t u_2 = &\;  \f{1}{1+\vro} b \cdot \nabla b_2  +\f{1}{1+\vro} (\p_1^2 u_2 + \p_2^2 u_2  + \p_2 \du )-  u \cdot \nabla u_2 - ( 1+ \vro) \p_2 \vro-  \f{1}{2(1+\vro)} \p_2 (| b|^2).
\dal \deqq
Applying the $\p_3^m$-operator to the above equality, and multiplying by $(1+ \vro) \vro \, \p_3^m u_2$, we have
\begin{align*}
&\; \i \p_t \p_3^m u_2 \, (1+ \vro) \vro \, \p_3^m u_2 \, dx \\
= &\; \i \vro \, \p_3^m u_2 \, b \cdot \nabla \p_3^m b_2 \, dx  + \i \p_3^m \Big\{\f{1}{1+\vro} (\p_1^2 u_2 + \p_2^2 u_2  + \p_2 \du ) \Big\} \, (1+ \vro) \vro \, \p_3^m u_2 \, dx \\
&\; - \i \p_3^m ( u \cdot \nabla u_2 )  \, (1+ \vro) \vro \, \p_3^m u_2 \, dx- \i \p_3^m \Big\{ ( 1 +\vro ) \p_2 \vro +    \f{1}{2(1+\vro)} \p_2 (| b|^2) \Big\} \, (1+ \vro) \vro \, \p_3^m u_2 \, dx  \\
&\; + \i \Big \{\p_3^m (\f{1}{1+\vro} (b \cdot \nabla b_2) -\f{1}{1+\vro} b \cdot \p_3^m \nabla b_2 \Big\}  \, (1+ \vro) \vro \, \p_3^m u_2 \, dx.
\end{align*}
Subtracting above equality to \eqref{Nlinear1-equ} yields that
\begin{align*}
I_{3,3,5} = &\;   \f{d}{dt}  \i  (1+ \vro) \vro \,|\p_3^m u_2|^2  \, dx -   \i  \,|\p_3^m u_2|^2  \p_t \vro \, (1+ 2\vro) \, dx   + 2 \i \p_3^m u_2 \, \p_3^m b_2 \, b\cdot \nabla \vro \, dx\\
&\;- 2 \i \p_3^m \Big\{\f{1}{1+\vro} (\p_1^2 u_2 + \p_2^2 u_2  + \p_2 \du ) \Big\} \, (1+ \vro) \vro \, \p_3^m u_2 \, dx \\
&\;+  \i  (1+ \vro) \vro \,  u \cdot \nabla |\p_3^m u_2 |^2\,  dx 
+ 2 \i \Big\{ \p_3^m ( u \cdot \nabla u_2 ) -  u \cdot \nabla \p_3^m u_2 \Big\} \, (1+ \vro) \vro \, \p_3^m u_2 \, dx\\
&\; + 2 \i (1+ \vro)^2 \vro \, \p_3^m \p_2 \vro \, \p_3^m u_2 \, dx + 2\i \Big\{ \p_3^m  (( 1 +\vro ) \p_2 \vro )- ( 1 +\vro ) \p_3^m \p_2 \vro \Big\}  \, (1+ \vro) \vro \, \p_3^m u_2 \, dx  \\
&\; +  \i \p_3^m \Big\{  \f{1}{(1+\vro)} \p_2 (| b|^2) \Big\}  \, (1+ \vro) \vro \, \p_3^m u_2 \, dx\\
&\; - 2 \i \Big \{\p_3^m (\f{1}{1+\vro} b \cdot \nabla b_2) -\f{1}{1+\vro} b \cdot \p_3^m \nabla b_2 \Big\}  \, (1+ \vro) \vro \, \p_3^m u_2 \, dx .
\end{align*}
 We focus on the estimate of seventh term on the right-hand side, i.e. $\i (1+ \vro)^2 \, \vro \,  \p_3^m \p_2 \vro \, \p_3^m u_2 \, dx $, since the  other nonlinear terms can be controlled as expected by using the anisotropic type inequalities \eqref{ie:Sobolev}.
 From the equation $\eqref{eqr}_2$, we have
\beqq \bal
 (1+\vro) \p_3 \vro = &\; -\p_t u_3 + \f{1}{1+\vro} ( \p_1^2 u_3 + \p_2^2 u_3  + \p_3 \du + \p_2 b_3 -\p_3 b_2 ) \\
&\; +\f{1}{1+\vro} (b \cdot \nabla b_3 - \f12 \p_3(|b|^2) ) - u \cdot \nabla u_3.
\dal \deqq
Thus, we have
  \beq\label{Nlinear1-07} \bal
&\;   \i (1+ \vro)^2 \, \vro \,  \p_3^m \p_2 \vro \, \p_3^m u_2 \, dx   \\
= &\; - \f{d}{dt}  \i (1+ \vro) \, \vro \,  \p_3^{m-1} \p_2 u_3 \, \p_3^m  u_2 \, dx + \i (1+ 2\vro) \p_t \vro \,  \p_3^{m-1} \p_2 u_3 \, \p_3^m  u_2 \, dx  \\
&\; +  \i (1+ \vro) \, \vro \,  \p_3^{m-1} \p_2 u_3 \, \p_3^{m} \p_t u_2 \, dx  -  \i \p_3^{m-1} \p_2 (  u \cdot \nabla u_3) \, (1+ \vro) \, \vro \,  \p_3^m  u_2 \, dx\\
&\; +  \i \p_3^{m-1} \p_2 \Big\{  \f{1}{1+\vro} ( \p_1^2 u_3 + \p_2^2 u_3  + \p_3 \du + \p_2 b_3 -\p_3 b_2 )  \Big\} \, (1+ \vro) \, \vro \,  \p_3^m  u_2 \, dx \\
&\; +  \i \p_3^{m-1} \p_2 \Big\{  \f{1}{1+\vro}  (b \cdot \nabla b_3 - \f12 \p_3(|b|^2))  \Big\} \, (1+ \vro) \, \vro \,  \p_3^m  u_2 \, dx \\
&\; -\i  \Big\{ \p_3^{m-1}\p_2 ( (1+ \vro)\p_3 \vro) -  (1+ \vro)\p_3^{m} \p_2 \vro \Big\} \, (1+ \vro) \, \vro \,  \p_3^m  u_2 \, dx \\
:= &\; - \f{d}{dt}  \i (1+ \vro) \, \vro \,  \p_3^{m-1} \p_2 u_3 \, \p_3^m  u_2 \, dx  + \sum_{i=1}^{6} J_{i}. 
\dal \deq
Using the equations $\eqref{eqr}_1$-$\eqref{eqr}_2$ and the anisotropic inequalities \eqref{ie:Sobolev}, integrating by parts, we can check
\begin{align*}
    J_{1} = &\; -\i (1+ 2\vro) (\du +\vro \,\du + u\cdot \nabla \vro) \,  \p_3^{m-1} \p_2 u_3 \, \p_3^m  u_2 \, dx  \\
    \lesssim &\; (1+\| \vro\|_{L^{\infty}} ) \| \p_3^m u_2\|_{L^2}^{\f12} \| \p_1 \p_3^m u_2\|_{L^2}^{\f12} \|  \p_2 \p_3^{m-1} u_2\|_{L^2}^{\f12} \| \p_2 \p_3^m u_2\|_{L^2}^{\f12} \\
    &\; \times \Big\{  (1+\| \vro\|_{L^{\infty}} ) \| \du\|_{L^2}^{\f12} \|\p_2 \du\|_{L^2}^{\f12} + \| u\|_{L^{\infty}} \| \nabla \vro\|_{L^2}^{\f12} \|\p_2 \nabla \vro\|_{L^2}^{\f12} \Big\}\\
    \lesssim &\; \sqrt{\me(t)} \md(t),\\
    J_{2} = &\; - \i \p_3 \Big\{ (1+ \vro) \, \vro \, \p_3^{m-1} \p_2 u_3 \Big\} \, \p_3^{m-1} \p_t u_2 \, dx \\
    \lesssim &\;  (1+ \|\vro\|_{L^{\infty}}) (\| \p_3 \vro\|_{L^2}^{\f14} \| \nabla_h \p_3 \vro\|_{L^2}^{\f12}   \| \nabla_h^2 \p_3 \vro\|_{L^2}^{\f14} \| \p_3^{m-1} \p_2 u_3  \|_{L^2}^{\f12} \| \p_3^{m} \p_2 u_3  \|_{L^2}^{\f12} \\
    &\; + \| \vro\|_{L^{\infty}} \| \p_3^{m} \p_2 u_3  \|_{L^2} ) \| \p_3^{m-1} \p_t u_2\|_{L^2} \\
    \lesssim &\; \sqrt{\me(t)} \md(t),
\end{align*}
where we have used
       \beq\label{Nlinear1-help-2} \bal
       \| \p_t u_h \|_{H^{m-1}} \lesssim &\; \|\p_1^2 u_h + \p_2^2 u_h + \nabla_h \du+\p_2 b_h  - \nabla_h b_2 +(F_2)_h \|_{H^{m-1} } \\ \lesssim &\; \| \f{1}{1+\vro} (\p_1^2 u_h + \p_2^2 u_h + \nabla_h \du -\nabla_h \vro +\p_2 b_h  - \nabla_h b_2 ) \|_{H^{m-1} } \\
       &\; +  \| u\cdot \nabla u_h\|_{H^{m-1}} +\| (1+\vro) \nabla_h \vro\|_{H^{m-1}} + \| \f{1}{1+\vro} ( b \cdot \nabla b_h - \f12 \nabla_h(|b|^2) )\|_{H^{m-1}} \\
        \lesssim &\; \sqrt{\md(t)}.
       \dal\deq
Integrating by parts and using the anisotropic inequalities \eqref{ie:Sobolev},  we can check
\begin{align*}
J_{3} = &\;  \sum_{0 \le l \le m-1} C_{m-1}^{l} \i (\p_3^l  u \cdot \nabla \p_3^{m-l-1} \p_2 u_3 + \p_3^l \p_2 u \cdot \nabla \p_3^{m-l-1} u_3) \, (1+ \vro) \, \vro \,  \p_3^m  u_2 \, dx\\
\lesssim &\; (1+ \| \vro \|_{L^{\infty}}) \| \vro\|_{L^{\infty}} \|  \p_3^m  u_2 \|_{L^2}^{\f12}\|  \p_1 \p_3^m  u_2 \|_{L^2}^{\f12} \Big(\| u\|_{L^2}^{\f14}  \| \p_3 u\|_{L^2}^{\f14} \| \p_2 u\|_{L^2}^{\f14} \| \p_{23} u\|_{L^2}^{\f14} \| \nabla \p_3^{m-1} \p_2 u_3\|_{L^2} \\
&\; + \sum_{1 \le l \le m-1} \|\p_3^{l} u\|_{L^2}^{\f12} \|\p_2 \p_3^{l} u\|_{L^2}^{\f12}  \|\nabla \p_3^{m-l-1} \p_2  u\|_{L^2}^{\f12} \|\nabla \p_3^{m-l} \p_2  u\|_{L^2}^{\f12}\\
 &\; + \sum_{0 \le l \le m-1} \|\p_3^{l} \p_2 u\|_{L^2}^{\f12} \|\p_3^{l+1} \p_2 u\|_{L^2}^{\f12}  \|\nabla \p_3^{m-l-1}  u\|_{L^2}^{\f12} \|\p_2 \nabla \p_3^{m-l}  u\|_{L^2}^{\f12} \Big)\\
 \lesssim &\; \sqrt{\me(t)} \md(t),\\
 J_{4} = &\; - \i   \p_3^{m-1}  \Big\{  \f{1}{1+\vro} (\p_1^2 u_3 + \p_2^2 u_3  + \p_3 \du + \p_2 b_3 - \p_3 b_2)  \Big\} \, \p_2 \Big\{ (1+ \vro) \, \vro \, \p_3^{m}  u_2 \Big\} \, dx \\
 = &\; - \sum_{0\le l\le m -1} C_{m-1}^{l} \i \p_3^{l} (\f{1}{1+\vro} ) \, \p_3^{m-l-1} (\p_1^2 u_3 + \p_2^2 u_3  + \p_3 \du + \p_2 b_3) \, \p_2 \Big\{ (1+ \vro) \, \vro \, \p_3^{m}  u_2 \Big\} \, dx\\
 &\; + \sum_{0\le l\le m -1} C_{m-1}^{l} \i \p_3^{l} (\f{1}{1+\vro} ) \, \p_3^{m-l} b_2 \, \p_2 \Big\{ (1+ \vro) \, \vro \, \p_3^{m}  u_2 \Big\} \, dx\\
 \lesssim &\; (1+ \| \vro\|_{L^{\infty}}) (  \| \vro\|_{L^{\infty}} \| \p_3^m \p_2 u\|_{L^2} + \| \p_2 \vro\|_{H_{tan}^1}^{\f12} \| \p_2 \p_3 \vro\|_{H_{tan}^1}^{\f12}  \| \p_3^m u\|_{L^2}^{\f12} \| \p_3^m \p_1 u\|_{L^2}^{\f12} ) \\
 &\; \times \|  \p_3^{m-1} (\f{1}{1+\vro} (\p_1^2 u_3 + \p_2^2 u_3  + \p_3 \du + \p_2 b_3))  \|_{L^2} \\
 &\;+ (1+ \| \vro\|_{L^{\infty}}) (  \| \vro\|_{L^{2}}^{\f14} \| \p_3 \vro\|_{L^2}^{\f14}  \| \p_2 \vro\|_{L^2}^{\f14}  \| \p_{23} \vro\|_{L^2}^{\f14}  \| \p_3^m \p_2 u\|_{L^2} + \| \p_2 \vro\|_{L^2}^{\f12} \| \p_2 \p_3 \vro\|_{L^2}^{\f12}  \| \p_3^m u\|_{L^2}^{\f12} \| \p_3^m \p_1 u\|_{L^2}^{\f12} ) \\
 &\; \times \|\p_3^{m-1} (\f{1}{1+\vro} \p_3 b_2)\|_{L^2}^{\f12} \|\p_1 \p_3^{m-1} (\f{1}{1+\vro} \p_3 b_2)\|_{L^2}^{\f12}\\
 \lesssim &\; \sqrt{\me(t)} \md(t).
\end{align*}
Similarly, we can check that
\beqq
 J_{5} +  J_{6}  \lesssim \sqrt{\me(t)} \md(t).
\deqq
Substituting the estimates from $J_{1}$ to $J_{6}$ into \eqref{Nlinear1-07}, we have
\beqq \bal
2 \i  (1+ \vro)^2 \vro \, \p_3^m  \p_2 \vro \,\p_3^m u_2 \, dx 
\le  - 2 \f{d}{dt}  \i (1+ \vro) \, \vro \,  \p_3^{m-1} \p_2 u_3 \, \p_3^m  u_2 \, dx  + C \sqrt{\me(t)} \md(t),
\dal \deqq
which yields that
\beqq \bal
I_{3,3,5} \le \f{d}{dt}  \i  (1+ \vro) \vro \,|\p_3^m u_2|^2  \, dx - 2 \f{d}{dt}  \i (1+ \vro) \, \vro \,  \p_3^{m-1} \p_2 u_3 \, \p_3^m  u_2 \, dx + C \sqrt{\me(t)} \md(t).
\dal \deqq
Substituting  the estimates from $I_{3,3,1}$ to $I_{3,3,6}$ into the equality \eqref{Nlinear1-06}, it holds that
\beqq \bal
I_{3,3} \le &\; \f{d}{dt} \i \vro \, |\p_3^m b_2 |^2\, dx+ \f{d}{dt}  \i  (1+ \vro) \vro \,|\p_3^m u_2|^2  \, dx \\
&\;- 2 \f{d}{dt}  \i (1+ \vro) \, \vro \,  \p_3^{m-1} \p_2 u_3 \, \p_3^m  u_2 \, dx  + C \sqrt{\me(t)} \md(t).
\dal \deqq
Finally, using the equation $\eqref{eqr}_3$ and integrating by parts, due to  the anisotropic type inequalities \eqref{ie:Sobolev},  we can check that
\begin{align*}
I_{3,2} = &\;  \i  \du \, \p_3^{m-1} u_3 \, \p_3^m \p_1^2 b_2 \, dx   +  \i  \du \, \p_3^{m-1} u_3 \, \p_3^m (-\du + \p_2 u_2 ) \, dx \\
&\; + \i \du \, \p_3^{m-1} u_3 \, \p_3^m ( - u \cdot  \nabla b_2 - b_2 \, \du +  b \cdot \nabla u_2) \, dx \\
= &\; - \i   \p_3^m \p_1 b_2 \,(\p_1 \du \, \p_3^{m-1} u_3 + \du \,  \p_1 \p_3^{m-1} u_3) \, dx   +  \i  \du \, \p_3^{m-1} u_3 \, \p_3^m (-\du + \p_2 u_2 ) \, dx \\
&\; + \i {\rm div} (\du \, \p_3^{m-1} u_3 \,  u)  \,\p_3^m  b_2  \, dx - \i {\rm div} (\du \, \p_3^{m-1} u_3 \,  b) \,  \p_3^m  u_2  \, dx \\
&\; + \sum_{1\le l \le m} C_{m}^{l}\i \du \, \p_3^{m-1} u_3 \, (-\p_3^l  u \cdot  \nabla  \p_3^{m-l} b_2 + \p_3^l b \cdot \p_3^{m-l} \nabla u_2) \, dx  \\
&\; - \i \du \, \p_3^{m-1} u_3 \, \p_3^m (  b_2 \, \du ) \, dx \\
\lesssim &\; \sqrt{\me(t)} \md(t).
\end{align*}
Substituting  the estimates from $I_{3,1}$ to $I_{3,4}$ into \eqref{Nlinear1-05}, it holds
\beq\label{Nlinear1-08} \bal
I_{3} \le &\; - \f{d}{dt} \i \du \, \p_3^m b_2 \, \p_3^{m-1} u_3 \, dx +\f{d}{dt} \i \vro \, |\p_3^m b_2 |^2\, dx+ \f{d}{dt}  \i  (1+ \vro) \vro \,|\p_3^m u_2|^2  \, dx \\
&\;  - 2 \f{d}{dt}  \i (1+ \vro) \, \vro \,  \p_3^{m-1} \p_2 u_3 \, \p_3^m  u_2 \, dx + C \sqrt{\me(t)} \md(t).
\dal \deq
{\bf Estimate of the term $I_4$.}
Substituting the equation of $\du$ into this term, we have  
\beqq \bal
I_4
       = &\; -\i \vro \, |\p_3^{m} \vro|^2 \,  ( \p_t \vro + u\cdot \nabla \vro + \vro \, \du) \, dx \\
        = &\; -\f12 \f{d}{dt} \i \vro^2 \, |\p_3^{m} \vro|^2 \, dx - \i \vro^2 \, \p_3^m \vro \, \p_3^{m}( \du +  u\cdot \nabla \vro + \vro \, \du)  \, dx  \\
        &\; -\i \vro \, |\p_3^{m} \vro|^2 \,  ( u\cdot \nabla \vro + \vro \, \du) \, dx \\
        = &\;-\f12 \f{d}{dt} \i \vro^2 \, |\p_3^{m} \vro|^2 \, dx - \i \vro^2 \, \p_3^m \vro \, \p_3^{m} \du  \, dx - \sum_{1\le l \le m} C_{m}^{l} \i \vro^2 \, \p_3^{m} \vro \, \p_3^{l} u \cdot \nabla \p_3^{m-l} \vro \, dx  \\
        &\;  - \sum_{0\le l \le m-1} C_{m}^{l} \i \vro^2 \, \p_3^m \vro \, \p_3^{l} \vro \, \p_3^{m-l} \du  \, dx - \f32 \i \vro^2 \, |\p_3^{m} \vro|^2  \, \du \, dx\\
        := &\; -\f12 \f{d}{dt} \i \vro^2 \, |\p_3^{m} \vro|^2 \, dx +\sum_{i=1}^{4} I_{4,i},
        \dal \deqq
        where we have used the following basic fact that
        \beqq \bal
 &\;  \i \vro^2 \, \p_3^{m} \vro\, u \cdot \nabla \p_3^{m} \vro \, dx + \i \vro^2 \, |\p_3^{m} \vro|^2 \, \du \, dx + \i \vro \, |\p_3^{m} \vro|^2 \,  ( u\cdot \nabla \vro + \vro \, \du) \, dx\\
= &\;- \f12 \i |\p_3^{m} \vro|^2 ( u\cdot \nabla (\vro^2) + \vro^2 \, \du )\, dx + \i \vro^2 \, |\p_3^{m} \vro|^2 \, \du \, dx + \i \vro \, |\p_3^{m} \vro|^2 \,  ( u\cdot \nabla \vro + \vro \, \du) \, dx\\
= &\; \f32 \i \vro^2 \, |\p_3^{m} \vro|^2  \, \du \, dx.
     \dal   \deqq
Using the anisotropic type inequality \eqref{ie:Sobolev}, we can check that
\begin{align*}
I_{4,2} \lesssim &\; \| \p_3^m \vro\|_{L^2} \| \vro\|_{L^{\infty}}^2 (\| \p_3^m \vro\|_{L^2} \|\p_3 u_3\|_{L^{\infty}} + \| \nabla_h \p_3^{m-1} \vro\|_{L^2} \|\p_3 u_h\|_{L^{\infty}} ) \\
&\; + \sum_{ 2 \le l \le m-1}\| \p_3^m \vro\|_{L^2} \| \vro\|_{L^{\infty}}^2 \|\p_3^l u\|_{L^2}^{\f14} \| \p_1 \p_3^l u\|_{L^2}^{\f14} \| \p_3^{l+1} u\|_{L^2}^{\f14} \| \p_1 \p_3^{l+1} u\|_{L^2}^{\f14} \| \nabla \p_3^{m-l} \vro\|_{L^2}^{\f12} \| \p_2 \nabla \p_3^{m-l} \vro\|_{L^2}^{\f12} \\
&\; + \| \p_3^m \vro\|_{L^2} \| \vro\|_{L^{\infty}}^2 (\|\p_3 \vro\|_{L^2}^{\f14} \| \nabla_h \p_3 \vro\|_{L^2}^{\f12} \| \nabla_h^2 \p_3 \vro\|_{L^2}^{\f14} \| \p_3^m u_3\|_{L^2}^{\f12} \| \p_3^{m+1} u_3\|_{L^2}^{\f12}\\ 
&\; + \|\nabla_h \vro\|_{L^2}^{\f14} \| \p_3 \nabla_h \vro\|_{L^2}^{\f14} \|\p_2 \nabla_h  \vro\|_{L^2}^{\f14} \|\p_{23} \nabla_h \vro\|_{L^2}^{\f14} \| \p_3^m u_h\|_{L^2}^{\f12} \| \p_1 \p_3^{m} u_h\|_{L^2}^{\f12})\\
  \lesssim &\; \sqrt{\me(t)} \md(t),
\end{align*}
where we have used the following estimate
\beqq \bal
\| \vro\|_{L^{\infty}} \lesssim &\; \|(\vro, \p_3 \vro)\|_{L^2}^{\f14} \|\nabla_h (\vro, \p_3 \vro)\|_{H_{tan}^1}^{\f34}, \\
\| \p_3 u\|_{L^{\infty}} \lesssim &\; \|( \p_3 u, \p_3^2 u)\|_{L^2}^{\f14} \|\nabla_h (\p_3 u, \p_3^2 u)\|_{H_{tan}^1}^{\f34}.
\dal \deqq
Similarly, we can check
\beqq 
I_{4,1} + I_{4,3} + I_{4,4}  \lesssim   \sqrt{\me(t)} \md(t),
\deqq
which, together with above estimate, yields that
\beq\label{Nlinear1-09}
I_4 \le -\f12\f{d}{dt} \i \vro^2 \, |\p_3^{m} \vro|^2 \, dx + C \sqrt{\me(t)} \md(t).
\deq
{\bf Estimate of the terms $I_5$-$I_7$}.
Using the anisotropic type inequality \eqref{ie:Sobolev},  we obtain that
\beq\label{Nlinear1-10} \bal
I_5 = &\; \sum_{ 1 \le l \le m-1} C_{m-1}^{l} \i \du \, \p_3^m \vro \,  \p_3^{l} \vro \, \p_3^{m-l} \vro \, dx \\
\lesssim &\; \| \du\|_{L^{\infty}} \| \p_3^m \vro\|_{L^2} (\| \p_3 \vro\|_{L^2}^{\f14} \| \p_1 \p_3 \vro\|_{L^2}^{\f14} \| \p_3^{2} \vro\|_{L^2}^{\f14} \|\p_1 \p_3^{2} \vro\|_{L^2}^{\f14} \| \p_3^{m-1} \vro\|_{L^2}^{\f12} \| \p_2 \p_3^{m-1} \vro\|_{L^2}^{\f12}\\
&\; +  \sum_{ 2 \le l \le m-1}  \| \p_3^l \vro\|_{L^2}^{\f12} \| \p_1 \p_3^l \vro\|_{L^2}^{\f12} \| \p_3^{m-l} \vro\|_{L^2}^{\f14} \| \p_2 \p_3^{m-l} \vro\|_{L^2}^{\f14} \| \p_3^{m-l+1} \vro\|_{L^2}^{\f14}\|\p_2 \p_3^{m-l+1} \vro\|_{L^2}^{\f14} )\\
\lesssim &\; \sqrt{\me(t)} \md(t).
\dal \deq
Similarly, we can obtain that
\beqq
I_6+I_7 \lesssim  \sqrt{\me(t)} \md(t).
\deqq 
Substituting above estimate, \eqref{Nlinear1-03}, \eqref{Nlinear1-04}, \eqref{Nlinear1-08}, \eqref{Nlinear1-09} and \eqref{Nlinear1-10} into \eqref{Nlinear1-02}, we have
\beqq \bal
 -\i \du \, |\p_3^m \vro|^2 \, dx 
\le  &\; \f{d}{dt} \i \du \, \p_3^{m} \vro \, \p_3^{m-1} u_3 \, dx   - \f{d}{dt} \i \du \, \p_3^m b_2 \, \p_3^{m-1} u_3 \, dx +\f{d}{dt} \i \vro \, |\p_3^m b_2 |^2\, dx\\
&\; + \f{d}{dt}  \i  (1+ \vro) \vro \,|\p_3^m u_2|^2  \, dx   - 2 \f{d}{dt}  \i (1+ \vro) \, \vro \,  \p_3^{m-1} \p_2 u_3 \, \p_3^m  u_2 \, dx  \\
&\; - \f12\f{d}{dt} \i \vro^2 \, |\p_3^{m} \vro|^2 \, dx + \sqrt{\me(t)} \md(t).
\dal \deqq
Therefore, we finish the proof of the estimate 
\eqref{Nlinear1-01-1}.
Finally we give the proof of \eqref{Nlinear1-01-2}. Notice that we have established the estimate of $- \i \du \, |\p_3^m b_2|^2 \, dx$, see the term $I_{3,3}$ in detail.
Then, similar to the estimate of $I_{3,3}$, we can check
\beqq \bal
- \i \du \, |\p_3^m b_1|^2 \, dx \le &\; \f{d}{dt} \i \vro \, |\p_3^m b_1 |^2\, dx+ \f{d}{dt}  \i  (1+ \vro) \vro \,|\p_3^m u_1|^2  \, dx \\
&\; - 2 \f{d}{dt}  \i (1+ \vro) \, \vro \,  \p_3^{m-1} \p_1 u_3 \, \p_3^m  u_1 \, dx   + C \sqrt{\me(t)} \md(t),
\dal \deqq
which, together with the estimate of $I_{3,3}$, yields the estimate \eqref{Nlinear1-01-2}. Thus we finish the proof of this lemma.
\end{proof}
Next, we estimate another difficult term involving the highest-order vertical derivatives of density.
\begin{lemm}\label{lemm:Nlinear2}
     Under the assumption \eqref{assumption}, for any smooth solution $(\vro ,u, b)$ of equation \eqref{eqr}, it holds
     \beq\label{Nlinear2-01-1} \bal
       \i \nabla_h \cdot u_h \, |\p_3^m \vro|^2 \, dx 
 \le &\;   \f{d}{dt} \i (1+\vro) \nabla_h \cdot u_h \, \p_3^{m} \vro \, \p_3^{m-1} u_3 \, dx +\f{d}{dt} \i \nabla_h \cdot u_h \, \p_3^m b_2 \, \p_3^{m-1} u_3 \, dx \\
&\; + \f{d}{dt} \i (b_2 -\vro)\,  |\p_3^m b_2|^2 \, dx  + \f{d}{dt}  \i  (1+ \vro) (b_2 -\vro) \,|\p_3^m u_2|^2  \, dx \\
&\;  - 2 \f{d}{dt}  \i (1+ \vro) \, (b_2 - \vro) \,  \p_3^{m-1} \p_2  u_3 \, \p_3^m u_2 \, dx  + C \sqrt{\me(t)} \md(t).
     \dal \deq
Furthermore, we also have
     \beq\label{Nlinear2-01-2} \bal
 \i \p_2 u_2 \,  |\p_3^m b_2|^2 \, dx  
\le &\;  \f{d}{dt} \i (b_2 -\vro)\,  |\p_3^m b_2|^2 \, dx + \f{d}{dt}  \i  (1+ \vro) (b_2 -\vro) \,|\p_3^m u_2|^2  \, dx \\
&\; - 2 \f{d}{dt}  \i (1+ \vro) \, (b_2 - \vro) \,  \p_3^{m-1} \p_2 u_3 \, \p_3^m  u_2 \, dx + C \sqrt{\me(t)} \md(t),
\dal \deq
and
     \beq\label{Nlinear2-01-3} \bal
 \i \p_2 u_2 \,  |\p_3^m b_1|^2 \, dx 
\le &\;  \f{d}{dt} \i (b_2 -\vro)\,  |\p_3^m b_1|^2 \, dx + \f{d}{dt}  \i  (1+ \vro) (b_2 -\vro) \,|\p_3^m u_1|^2  \, dx \\
&\; - 2 \f{d}{dt}  \i (1+ \vro) \, (b_2 - \vro) \,  \p_3^{m-1} \p_1 u_3 \, \p_3^m  u_1 \, dx  + C \sqrt{\me(t)} \md(t).
\dal \deq
     \end{lemm}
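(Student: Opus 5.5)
The plan mirrors the proof of Lemma \ref{lemm:Nlinear1}, with $\du$ replaced by $\nabla_h\cdot u_h$, and treats \eqref{Nlinear2-01-2}--\eqref{Nlinear2-01-3} first.

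\textbf{Step 1: the $b$-estimates.} For \eqref{Nlinear2-01-2}, use the second component of $\eqref{eqr}_3$,
\[
\p_2 u_2=\p_t b_2-\p_1^2 b_2+\du-(F_3)_2 .
\]
Then
\[
\i \p_2 u_2|\p_3^m b_2|^2\,dx=\f{d}{dt}\i b_2|\p_3^m b_2|^2\,dx-2\i b_2\,\p_3^m b_2\,\p_t\p_3^m b_2\,dx+\i\big(\du-\p_1^2 b_2-(F_3)_2\big)|\p_3^m b_2|^2\,dx .
\]
\begin{itemize}
\item In the middle term, substitute $\eqref{eqr}_3$ again. The pieces with $\p_1^2 b_2$, $\p_1u_1+\p_3u_3$ and commutators are bounded by $\sqrt{\me}\,\md$ via \eqref{ie:Sobolev}. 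The transport part $u\cdot\nabla\p_3^m b_2$ cancels, after integration by parts, against the $u\cdot\nabla b_2$ and $b_2\du$ contributions.
\item The term $\i \du|\p_3^m b_2|^2\,dx$ is exactly $-1$ times the quantity estimated in $I_{3,3}$ (see \eqref{Nlinear1-01-2}). This produces the $-\vro$ contributions in the weights $(b_2-\vro)$.
\item The dangerous remainder is $2\i b_2\,\p_3^m b_2\, b\cdot\nabla\p_3^m u_2\,dx$. Integrate by parts using ${\rm div}\, b=0$ to get $-2\i b_2\,\p_3^m u_2\, b\cdot\nabla\p_3^m b_2\,dx$ plus a harmless term.
\item Cancel this against the $b\cdot\nabla b_2/(1+\vro)$ part of $F_2$. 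Test $\p_3^m$ of the $u_2$-equation against $(1+\vro)b_2\,\p_3^m u_2$. This is the computation after \eqref{Nlinear1-equ} with $\vro$ replaced by $b_2$; there $\p_t b_2$ is bounded like $\p_t\vro$, but only using $\eqref{eqr}_3$.
\item The leftover $\i(1+\vro)^2 b_2\,\p_3^m\p_2\vro\,\p_3^m u_2\,dx$ is treated as in \eqref{Nlinear1-07}. Substitute $(1+\vro)\p_3\vro=-\p_t u_3+\cdots$, which yields the term $\f{d}{dt}\i(1+\vro)b_2\,\p_3^{m-1}\p_2u_3\,\p_3^m u_2\,dx$. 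The $J_i$-type remainders are bounded using \eqref{Nlinear1-help-2}.
\end{itemize}
Collecting signs gives the weights $(b_2-\vro)$ stated in \eqref{Nlinear2-01-2}. The bound \eqref{Nlinear2-01-3} is identical with $b_1$ in place of $b_2$, using \eqref{Nlinear1-01-2} for the $b_1$ component.

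\textbf{Step 2: the density term.} For \eqref{Nlinear2-01-1}, substitute the formula for $(1+\vro)\p_3\vro$ from the momentum equation into one factor $\p_3^m\vro$, weighting by $1+\vro$ to match the stated time derivative. Then move $\p_t$ off $u_3$, exactly as in \eqref{Nlinear1-02}.
\begin{itemize}
\item The term with $\p_t\nabla_h\cdot u_h$ is handled like $I_1$, using the analogue of the bound on $\p_t\du$.
\item The term with $\p_t\p_3^m\vro$ is handled like $I_2$, via $\eqref{eqr}_1$. Only horizontal derivatives fall on $u_h$, which is favourable.
\item The coupling $\p_3 b_2$ produces $-\i\nabla_h\cdot u_h\,\p_3^m\vro\,\p_3^m b_2\,dx$. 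Substitute $\p_3\vro$ a second time to get the term $\f{d}{dt}\i\nabla_h\cdot u_h\,\p_3^m b_2\,\p_3^{m-1}u_3\,dx$ plus $\i\nabla_h\cdot u_h|\p_3^m b_2|^2\,dx$.
\item Split $\nabla_h\cdot u_h=\p_1u_1+\p_2u_2$. The $\p_1u_1$ part is controlled by \eqref{ie:Sobolev}, since $\p_1\p_3^m b_2$ is dissipated. The $\p_2u_2$ part is Step 1.
\end{itemize}

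\textbf{Main obstacle.} I expect the main obstacle to be the cancellation in Step 1 with weight $b_2$. The correction terms must match exactly, including the factor $(1+\vro)$, and $\p_t b_2$ must be shown to carry enough dissipation. All remaining terms should follow the template of Lemma \ref{lemm:Nlinear1} with $\sqrt{\me(t)}\md(t)$ bounds.
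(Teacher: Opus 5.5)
Your proposal is correct and follows the paper's proof. Step 1 is exactly the paper's treatment of $\int \partial_2 u_2\,|\partial_3^m b_2|^2\,dx$: you substitute the $b_2$-equation, reuse the $I_{3,3}$ computation for $\int \mathrm{div}\,u\,|\partial_3^m b_2|^2\,dx$, remove the derivative-losing term by testing $\partial_3^m$ of the $u_2$-equation against $(1+\varrho)b_2\,\partial_3^m u_2$, and handle the leftover pressure term as in \eqref{Nlinear1-07}. Step 2 is the paper's $II_1$--$II_7$ decomposition, with $II_{3,3}$ reduced to Step 1.

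The differences are bookkeeping only:
\begin{itemize}
\item The paper substitutes the unweighted $\partial_3\varrho$-formula and then treats $II_4=-\int \varrho\,\nabla_h\cdot u_h\,|\partial_3^m\varrho|^2\,dx$ by a second substitution. Your $(1+\varrho)$-weighted substitution merges these into one step. The first boundary term $\frac{d}{dt}\int w\,\nabla_h\cdot u_h\,\partial_3^m\varrho\,\partial_3^{m-1}u_3\,dx$ then comes with $w=-\frac{1}{1+\varrho}$, not the displayed $w=1+\varrho$. The paper's own \eqref{Nlinear2-02} and \eqref{Nlinear2-09} combine to $w=-(1-\varrho)$, so its displayed weight does not match its proof either. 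This is harmless, since the term is only a cubic correction to the energy.
\item The dangerous term in Step 1 actually enters with a minus sign, as $-2\int b_2\,\partial_3^m b_2\, b\cdot\nabla\partial_3^m u_2\,dx$. This does not affect the cancellation.
\end{itemize}
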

     \begin{proof}
         Similar to the equality \eqref{Nlinear1-02}, we have
         \beq \label{Nlinear2-02} \bal
         &\; \i \nabla_h \cdot u_h \, |\p_3^m \vro|^2 \, dx \\
         = &\; -\f{d}{dt} \i \nabla_h \cdot u_h \, \p_3^{m} \vro \, \p_3^{m-1} u_3 \, dx   + \i \p_t \nabla_h \cdot u_h \, \p_3^{m} \vro \, \p_3^{m-1} u_3  \, dx +  \i \nabla_h \cdot u_h \, \p_3^{m} \p_t \vro \, \p_3^{m-1} u_3  \, dx   \\
         &\; - \i \nabla_h \cdot u_h \, \p_3^{m} \vro \, \p_3^{m} b_2 \,dx  - \i 
         \vro \, \nabla_h \cdot u_h \, |\p_3^{m} \vro|^2 \, dx - \i \nabla_h \cdot u_h \, \p_3^{m} \vro \, \{\p_3^{m-1} (\vro \,\p_3 \vro) - \vro \, \p_3^m \vro\} \, dx \\
         &\;+ \i \nabla_h \cdot u_h \, \p_3^{m} \vro \, \p_3^{m-1} \Big\{\f{1}{1+\vro} (\p_1^2 u_3 + \p_2^2 u_3+ \p_3 \du+\p_2 b_3) \Big\} \, dx \\
         &\; + \i \nabla_h \cdot u_h \, \p_3^{m} \vro \, \p_3^{m-1} \Big\{\f{1}{1+\vro} (b \cdot \nabla b_3 - \f12 \p_3 (|b|^2))+\f{\vro}{1+\vro} \p_3 b_2- u\cdot \nabla u_3\Big\}\, dx\\
        := &\;   -\f{d}{dt} \i \nabla_h \cdot u_h \, \p_3^{m} \vro \, \p_3^{m-1} u_3 \, dx 
        +\sum_{i=1}^{7} II_{i}.
         \dal \deq
         {\bf Estimate of the term $II_1$.} 
         Using the estimate \eqref{Nlinear1-help-2}, we can immediately get that 
         \beq\label{Nlinear2-03} \bal
         II_1 
         \lesssim &\;  \|\nabla_h \cdot \p_t u_h \|_{L^2}^{\f12} \|\p_1 \nabla_h \cdot \p_t u_h \|_{L^2}^{\f12} \|  \p_3^{m} \vro\|_{L^2} \| \p_3^{m-1} u_3 \|_{L^2}^{\f14} \\
          &\; \times \| \p_2 \p_3^{m-1} u_3 \|_{L^2}^{\f14} \| \p_3^{m} u_3 \|_{L^2}^{\f14} \| \p_2 \p_3^{m} u_3 \|_{L^2}^{\f14} \\
     \lesssim &\;  \sqrt{\me(t)} \md(t).
         \dal \deq
         {\bf Estimate of the term $II_2$.}
         Substituting the equation $\eqref{eqr}_1$ into the term $II_2$, integrating by parts, we have
         \beqq \bal
         II_2 = &\; - \i \nabla_h  \cdot  u_h \, \p_3^{m} \du\, \p_3^{m-1} u_3 \, dx  - \i   \nabla_h \cdot u_h \,  \p_3^{m} (u \cdot \nabla \vro + \vro \, \du  )\, \p_3^{m-1} u_3 \, dx \\
          = &\;  -\i \nabla_h  \cdot  u_h \,  \p_3^{m} \du\, \p_3^{m-1} u_3 \, dx  + \i \p_3^{m-1} (u \cdot \nabla \vro ) \, \p_3  (\nabla_h \cdot  u_h \,  \p_3^{m-1} u_3 ) \, dx \\
         &\;   - \i   \nabla_h  \cdot  u_h \, \p_3^{m} ( \vro \, \du  )\, \p_3^{m-1} u_3 \, dx\\
         := &\; \sum_{i=1}^{3} II_{2,i}.
         \dal \deqq
         Using the anisotropic type inequality \eqref{ie:Sobolev}, we can check that
         \begin{align*}
II_{2,2} = &\;  \sum_{ 0 \le l \le m-1 }C_{m-1}^{l} \i \p_3^{l} u \cdot \p_3^{m-1-l} \nabla \vro  \,  (\p_3 \nabla_h \cdot u_h \,  \p_3^{m-1} u_3 + \nabla_h \cdot u_h \,  \p_3^{m} u_3) \, dx \\
\lesssim &\; ( \sum_{1 \le  l \le m-1} \| \p_3^{m-1-l} \nabla \vro\|_{L^2}^{\f12} \| \p_2 \p_3^{m-1-l} \nabla \vro\|_{L^2}^{\f12} \| \p_3^{l} u\|_{L^2}^{\f14} \| \p_3^{l+1} u \|_{L^2}^{\f14} \| \p_1 \p_3^{l} u \|_{L^2}^{\f14} \| \p_1 \p_3^{l+1} u \|_{L^2}^{\f14}  \\
&\; +\| \p_3^{m-1} \nabla \vro\|_{L^2} \| u\|_{L^{\infty}} ) (\| \p_3^{m} u_3 \|_{L^2}^{\f12} \| \p_3^{m+1} u_3 \|_{L^2}^{\f12} \| \nabla_h  u \|_{L^2}^{\f14} \|\nabla_h^2 u\|_{L^2}^{\f12} \|\nabla_h^3 u\|_{L^2}^{\f14}\\
&\; + \| \p_3^{m-1} u_3 \|_{L^2}^{\f14} \|\p_2 \p_3^{m-1} u_3 \|_{L^2}^{\f14} \| \p_3^{m} u_3 \|_{L^2}^{\f14} \| \p_2 \p_3^{m} u_3 \|_{L^2}^{\f14} \| \p_3 \nabla_h \cdot u_h \|_{L^2}^{\f12} \|\p_1  \p_3 \nabla_h \cdot u_h\|_{L^2}^{\f12}) \\
     \lesssim &\;  \sqrt{\me(t)} \md(t).
         \end{align*}
         Similarly, we can check
         \beqq
         II_{2,1} + II_{2,3} \lesssim \sqrt{\me(t)} \md(t),
         \deqq
         which, together with above estimate, yields that
         \beq \label{Nlinear2-04}
         II_2 \lesssim \sqrt{\me(t)} \md(t).
         \deq
         {\bf Estimate of the term $II_3$.}  Notice that $II_3$ cannot be controlled by $\sqrt{\me(t)} \md(t)$ in a similar way to the term $I_3$. Our method here is to substitute the equation of $\p_3 \vro$ into the term $II_3$, which yields that
         \beq \label{Nlinear2-05} \bal
II_3 = &\; \f{d}{dt} \i \nabla_h \cdot u_h \, \p_3^m b_2 \, \p_3^{m-1} u_3 \, dx  -   \i  \p_t \nabla_h \cdot u_h \,  \p_3^m b_2 \, \p_3^{m-1} u_3 \, dx -   \i \nabla_h \cdot u_h \, \p_t \p_3^m b_2 \, \p_3^{m-1} u_3 \, dx \\
&\; + \i \nabla_h \cdot u_h \, |\p_3^m b_2|^2 \, dx 
- \i \nabla_h \cdot u_h \, \p_3^m b_2 \, \p_3^{m-1} ( \p_1^2 u_3 + \p_2^2 u_3 + \p_3 \du+\p_2 b_3 + (F_2)_3 ) \, dx \\
:= &\;  \f{d}{dt} \i \nabla_h \cdot u_h \, \p_3^m b_2 \, \p_3^{m-1} u_3 \, dx +\sum_{i=1}^{4} II_{3,i}.
\dal \deq
Using the estimates \eqref{Nlinear1-help-1} and \eqref{Nlinear1-help-2}, we can check that
\beqq \bal
II_{3,1} \lesssim &\; \| \p_t \nabla_h \cdot u_h \|_{L^2} \| \p_3^m b_2 \|_{L^2}^{\f12} \| \p_1 \p_3^m b_2 \|_{L^2}^{\f12} \| \p_3^{m-1} u_3 \|_{L^2}^{\f14} \| \p_2 \p_3^{m-1} u_3 \|_{L^2}^{\f14} \| \p_3^{m} u_3 \|_{L^2}^{\f14} \| \p_2 \p_3^{m} u_3 \|_{L^2}^{\f14}  \\
\lesssim &\;  \sqrt{\me(t)} \md(t),\\
II_{3,4} \lesssim &\; \| \nabla_h \cdot u_h \|_{L^2}^{\f14} \| \p_3 \nabla_h \cdot u_h \|_{L^2}^{\f14} \| \p_2 \nabla_h \cdot u_h \|_{L^2}^{\f14} \|\p_{23} \nabla_h \cdot u_h \|_{L^2}^{\f14} \| \p_3^m b_2 \|_{L^2}^{\f12} \| \p_1 \p_3^m b_2 \|_{L^2}^{\f12} \\
&\; \times \|\p_3^{m-1} ( \p_1^2 u_3 + \p_2^2 u_3 + \p_3 \du+\p_2 b_3+ (F_2)_3 )\|_{L^2}\\
\lesssim &\; \sqrt{\me(t)} \md(t).
\dal \deqq
Next, we estimate the term $II_{3,3}$. Integrating by parts, we have 
\beq \label{Nlinear2-06} \bal 
II_{3,3} = &\;  - \i u_1 \p_1(|\p_3^m b_2|^2) \, dx + \i \p_2 u_2 \,  |\p_3^m b_2|^2 \, dx \\
\lesssim &\; \|u_1 \|_{L^2}^{\f14} \|\p_3 u_1 \|_{L^2}^{\f14} \|\p_2 u_1 \|_{L^2}^{\f14} \|\p_{23} u_1 \|_{L^2}^{\f14} \|\p_3^m b_2\|_{L^2}^{\f12} \|\p_1 \p_3^m b_2\|_{L^2}^{\f12} \|\p_1 \p_3^m b_2\|_{L^2}  + \i \p_2 u_2 \,  |\p_3^m b_2|^2 \, dx \\
\le &\; \i \p_2 u_2 \,  |\p_3^m b_2|^2 \, dx +  C\sqrt{\me(t)} \md(t).
\dal \deq
Due to the lack of dissipation for the magnetic field in two directions, the first term on the right-hand side can not be controlled by $\sqrt{\me(t)} \md(t)$ as expected. Thus, we substitute the equation of $\p_2 u_2$ 
\beqq
\p_2 u_2 = \p_t b_2 - \p_1^2 b_2 + \du - (F_3)_2
\deqq
into $\i \p_2 u_2 \,  |\p_3^m b_2|^2 \, dx$, which yields that 
\beq \label{Nlinear2-07} \bal 
&\; \i \p_2 u_2 \,  |\p_3^m b_2|^2 \, dx \\
= &\;  \f{d}{dt} \i b_2 \,  |\p_3^m b_2|^2 \, dx - 2 \i b_2 \,  \p_3^m b_2\,  \p_3^m \p_t b_2 dx +  \i \du \,  |\p_3^m b_2|^2 \, dx  - \i ( \p_1^2 b_2  + (F_3)_2 ) \,  |\p_3^m b_2|^2 \, dx\\
= &\;  \f{d}{dt} \i b_2 \,  |\p_3^m b_2|^2 \, dx  - 2 \i b_2 \,  \p_3^m b_2\,  \p_3^m (\p_1^2 b_2  +\p_2 u_2  -\du) dx  +2 \i b_2 \,  \p_3^m b_2\,  \p_3^m ( u \cdot \nabla b_2 ) \, dx \\
&\;+2 \i b_2 \,  \p_3^m b_2\,  \p_3^m ( b_2 \, \du) \, dx
 - 2 \i b_2 \,  \p_3^m b_2\,  \p_3^m (b \cdot \nabla u_2) \, dx
 +  \i \du \,  |\p_3^m b_2|^2 \, dx  \\
&\;- \i ( \p_1^2 b_2  + b \cdot \nabla u_2  ) \,  |\p_3^m b_2|^2 \, dx + \i (  u \cdot \nabla b_2 +b_2 \, \du ) \,  |\p_3^m b_2|^2 \, dx\\
:= &\; \f{d}{dt} \i b_2 \,  |\p_3^m b_2|^2 \, dx  + \sum_{i=1}^{7} K_{i}.
\dal \deq
Integrating by parts, we have
\begin{align*}
K_{1} = &\;  2 \i \p_3^m \p_1 b_2 \, \p_1 ( b_2 \, \p_3^{m} b_2 ) \, dx  - \i b_2 \, \p_3^m  b_2 \, (\p_3^m \p_2 u_2 -\p_3^m \du)\, dx \\
\lesssim &\; \| \p_3^m \p_1 b_2\|_{L^2}^2 \| b_2 \|_{L^\infty} + \|\p_3^m  b_2\|_{L^2}^{\f12} \|\p_3^m \p_1 b_2\|_{L^2}^{\f32} \| \p_1 b_2\|_{H_{tan}^1}^{\f12} \|\p_3  \p_1 b_2\|_{H_{tan}^1}^{\f12} \\
&\; + \|\p_3^m  b_2\|_{L^2}^{\f12} \|\p_1 \p_3^m  b_2\|_{L^2}^{\f12} \| \p_3^m ( \p_2 u_2, \du)\|_{L^2} \|b_2\|_{L^2}^{\f14} \| \p_2 b_2\|_{L^2}^{\f14}  \| \p_3 b_2\|_{L^2}^{\f14} \|\p_{23} b_2\|_{L^2}^{\f14} \\
\lesssim &\; \sqrt{\me(t)} \md(t), \\
K_{6} = &\; 2 \i \p_1  b_2 \, \p_3^{m} b_2  \,\p_3^m \p_1 b_2 \,  dx - \i b \cdot \nabla u_2 \, |\p_3^m  b_2|^2 \, \, dx \\
\lesssim&\;  \|\p_3^m  b_2\|_{L^2}^{\f12} \|\p_3^m \p_1 b_2\|_{L^2}^{\f32} \| \p_1 b_2\|_{H_{tan}^1}^{\f12} \|\p_3  \p_1 b_2\|_{H_{tan}^1}^{\f12}  +  \|\p_3^m  b_2\|_{L^2} \|\p_1 \p_3^m  b_2\|_{L^2} \\
&\; \times \|b\|_{L^2}^{\f14} \| \p_2 b\|_{L^2}^{\f14}  \| \p_3 b\|_{L^2}^{\f14} \|\p_{23} b\|_{L^2}^{\f14}  \|\nabla u_2 \|_{L^2}^{\f14} \| \p_2 \nabla u_2\|_{L^2}^{\f14}  \| \p_3 \nabla u_2\|_{L^2}^{\f14} \|\p_{23} \nabla u_2\|_{L^2}^{\f14}  \\
\lesssim &\; \sqrt{\me(t)} \md(t),
\end{align*}
and
\begin{align*}
    K_{2} + K_{7} = &\; \i b_2 \, u \cdot \nabla |\p_3^m b_2|^2 \, dx + 2 \sum_{1\le l\le m} C_{m}^{l} \i b_2 \, \p_3^m b_2 \, \p_3^{l} u \cdot \p_3^{m-l} \nabla b_2 \, dx \\
    &\; + \i (  u \cdot \nabla b_2 +b_2 \,\du ) \,  |\p_3^m b_2|^2 \, dx\\
    = &\; 2 \sum_{1\le l\le m} C_{m}^{l} \i b_2 \, \p_3^m b_2 \, \p_3^{l} u \cdot \p_3^{m-l} \nabla b_2 \, dx\\
    \lesssim &\; \| \p_3^m b_2 \|_{L^2}^{\f12} \| \p_1 \p_3^m b_2 \|_{L^2}^{\f12} \| b_2\|_{L^2}^{\f14} \| \p_3 b_2\|_{L^2}^{\f14} \| \p_2 b_2\|_{L^2}^{\f14} \| \p_{23} b_2\|_{L^2}^{\f14} \\
    &\; \times (\sum_{1\le l \le m-1} \| \p_3^l u\|_{L^2}^{\f14} \| \p_3^{l+1} u\|_{L^2}^{\f14} \| \p_2 \p_3^l u\|_{L^2}^{\f14} \| \p_2 \p_3^{l+1} u\|_{L^2}^{\f14} \| \p_3^{m-l} \nabla b_2\|_{L^2}^{\f12} \| \p_1 \p_3^{m-l} \nabla b_2\|_{L^2}^{\f12} \\
    &\; + \| \p_3^m u\|_{L^2}^{\f12} \| \p_2 \p_3^m u\|_{L^2}^{\f12} \| \nabla b_2\|_{L^2}^{\f14}  \| \p_3 \nabla b_2\|_{L^2}^{\f14} \|\p_1 \nabla b_2\|_{L^2}^{\f14} \|\p_{13} \nabla b_2\|_{L^2}^{\f14} )\\
    \lesssim &\; \sqrt{\me(t)} \md(t).
\end{align*}
Similarly, we can check
\beqq
K_3 \lesssim \sqrt{\me(t)} \md(t).
\deqq
Similarly to the estimate of $I_{3,3}$, we have
\beqq \bal
K_{5} \le &\; -\f{d}{dt} \i \vro \, |\p_3^m b_2 |^2\, dx- \f{d}{dt}  \i  (1+ \vro) \vro \,|\p_3^m u_2|^2  \, dx \\
&\;+ 2 \f{d}{dt}  \i (1+ \vro) \, \vro \,  \p_3^{m-1}\p_2 u_3 \, \p_3^m  u_2 \, dx   + C \sqrt{\me(t)} \md(t).
\dal \deqq
It remains to estimate the term $K_{4}$. Integrating by parts, due to the condition ${\rm div}b=0$, it holds
\beqq \bal
K_{4} =  &\; - 2 \i b_2 \, \p_3^m b_2 \, b \cdot \nabla \p_3^m u_2 \, dx - 2 \sum_{1 \le l \le m} C_{m}^{l} \i b_2 \, \p_3^m b_2 \, \p_3^{l} b \cdot  \nabla \p_3^{m-l} u_2 \, dx \\
=  &\;  2 \i  b_2 \,  \p_3^m u_2  \, b \, \cdot \nabla \p_3^m b_2 \, dx  +  2 \i \p_3^m u_2 \, \p_3^m b_2 \, b \, \cdot \nabla b_2  \, dx \\
&\; - 2 \sum_{1 \le l \le m} C_{m}^{l} \i b_2 \, \p_3^m b_2 \, \p_3^{l} b \cdot  \nabla \p_3^{m-l} u_2 \, dx \\
:= &\; \sum_{i=1}^{3} K_{4,i}.
\dal \deqq
Similar to the estimate of $K_{2}+K_{7}$, we can check
\beqq \bal 
K_{4,2} + K_{4,3} \lesssim  \sqrt{\me(t)} \md(t).
\dal \deqq
It remains to estimate the term $K_{4,1}$. Similar to the estimate of $I_{3,3,5}$, we can check that
\beqq \bal
K_{4,1} \le &\;  \f{d}{dt}  \i  (1+ \vro) \, b_2 \,|\p_3^m u_2|^2  \, dx - 2 \f{d}{dt}  \i (1+ \vro) \, b_2 \,  \p_3^{m-1} \p_2 u_3 \, \p_3^m  u_2 \, dx   + C \sqrt{\me(t)} \md(t),
\dal \deqq
which, combining the estimates from $K_{4,2}$ to $K_{4,3}$, yields that
\beqq \bal
K_{4} \le &\;  \f{d}{dt}  \i  (1+ \vro) \, b_2 \,|\p_3^m u_2|^2  \, dx - 2 \f{d}{dt}  \i (1+ \vro) \, b_2 \,  \p_3^{m-1} \p_2 u_3 \, \p_3^m  u_2 \, dx + C \sqrt{\me(t)} \md(t).
\dal \deqq
Substituting  the estimates from $K_{1}$ to $K_{7}$ into the equality \eqref{Nlinear2-07}, we have
\beq\label{Nlinear2-tar} \bal
 \i \p_2 u_2 \,  |\p_3^m b_2|^2 \, dx  
\le &\;  \f{d}{dt} \i (b_2 -\vro)\,  |\p_3^m b_2|^2 \, dx + \f{d}{dt}  \i  (1+ \vro) (b_2 -\vro) \,|\p_3^m u_2|^2  \, dx \\
&\; - 2 \f{d}{dt}  \i (1+ \vro) \, (b_2 - \vro) \,  \p_3^{m-1} \p_2 u_3 \, \p_3^m  u_2 \, dx.
\dal \deq
Substituting above estimates into \eqref{Nlinear2-06}, we have
\beqq \bal
II_{3,3}
\le &\; \f{d}{dt} \i (b_2 -\vro)\,  |\p_3^m b_2|^2 \, dx + \f{d}{dt}  \i  (1+ \vro) (b_2 -\vro) \,|\p_3^m u_2|^2  \, dx \\
&\; - 2 \f{d}{dt}  \i (1+ \vro) \, (b_2 - \vro) \,  \p_3^{m-1} \p_2 u_3 \, \p_3^m  u_2 \, dx   + C \sqrt{\me(t)} \md(t).
\dal \deqq
In a similar way, integrating by parts, we can estimate
\beqq \bal
II_{3,2} = &\;  \i \p_3 (\nabla_h \cdot u_h \, \p_3^{m-1} u_3 ) \, \p_3^{m-1} \p_t b_2 \, dx \\
= &\;  \i \p_3 (\nabla_h \cdot u_h \, \p_3^{m-1} u_3 ) \, \p_3^{m-1} (\p_1^2 b_2  - \du + \p_2 u_2  + (F_3)_2) \, dx \\
\lesssim &\; \sqrt{\me(t)} \md(t).
\dal \deqq
Substituting the estimates from $II_{3,1}$ to 
$II_{3,4}$ into the equality \eqref{Nlinear2-05}, we have
\beq \label{Nlinear2-08} \bal
II_3 \le &\; \f{d}{dt} \i \nabla_h \cdot u_h \, \p_3^m b_2 \, \p_3^{m-1} u_3 \, dx + \f{d}{dt} \i (b_2 -\vro)\,  |\p_3^m b_2|^2 \, dx \\
&\; + \f{d}{dt}  \i  (1+ \vro) (b_2 -\vro) \,|\p_3^m u_2|^2  \, dx  - 2 \f{d}{dt}  \i (1+ \vro) \, (b_2 - \vro) \,  \p_3^{m-1} \p_2 u_3 \, \p_3^m u_2 \, dx \\
&\;  + C \sqrt{\me(t)} \md(t).
\dal \deq
{\bf Estimate of the term $II_4$.}  
     Due to the lower dissipative structure of density, we substitute the equation of $\p_3 \vro$ to deal with this term, which yields that
        \begin{align*}
       II_4 
         = &\;  \f{d}{dt} \i \vro \, \nabla_h \cdot u_h \, \p_3^{m} \vro \, \p_3^{m-1} u_3 \, dx  -\i \p_t \vro \, \nabla_h \cdot u_h \, \p_3^{m} \vro \, \p_3^{m-1} u_3  \, dx  \\
         &\; -\i \vro \, \p_t \nabla_h \cdot u_h \, \p_3^{m} \vro \, \p_3^{m-1} u_3  \, dx  -\i \vro\, \nabla_h \cdot u_h \, \p_3^{m} \p_t \vro \, \p_3^{m-1} u_3  \, dx  \\
        &\; - \i \vro \, \nabla_h \cdot u_h \,  \p_3^{m} \vro \,  \p_3^{m-1} (\p_1^2 u_3 + \p_2^2 u_3  + \p_3 \du+\p_2 b_3  - \p_3 b_2 + (F_2)_3 )\, dx \\
        := &\; \f{d}{dt} \i \vro \, \nabla_h \cdot u_h \, \p_3^{m} \vro \, \p_3^{m-1} u_3 \, dx 
        + \sum_{i=1}^{4} II_{4,i}.
        \end{align*}
        Using the estimate \eqref{Nlinear1-help-1}, \eqref{Nlinear1-help-2} and the following estimate 
        \beqq
        \| \p_t \vro\|_{H^{1}} \lesssim \| \du\|_{H^{1}} + \| \vro \, \du \|_{H^{1}} + \| u \cdot \nabla \vro\|_{H^{1}} \lesssim  \sqrt{\md(t)},
        \deqq
        it holds, 
        \begin{align*}
II_{4,1} + II_{4,2} \lesssim &\;( \| \nabla_h \cdot  u_h\|_{L^{\infty}} \| \p_t \vro\|_{L^{2}}^{\f12}  \| \p_1 \p_t \vro\|_{L^2}^{\f12} + \| \vro\|_{L^{\infty}} \| \nabla_h \cdot \p_t u_h\|_{L^2}^{\f12} \| \p_1 \nabla_h \cdot \p_t u_h\|_{L^2}^{\f12} )\\
&\; \times \| \p_3^m \vro\|_{L^2} \| \p_3^{m-1} u_3 \|_{H_{tan}^1}^{\f12} \| \p_3^{m} u_3 \|_{H_{tan}^1}^{\f12}\\
 \lesssim &\; \sqrt{\me(t)} \md(t),\\
 II_{4,4} \lesssim &\;  \| (\vro, \p_3 \vro)\|_{L^{2}}^{\f14} \|\nabla_h (\vro, \p_3 \vro)\|_{H_{tan}^1}^{\f34} \| \nabla_h \cdot u_h\|_{L^{\infty}} \| \p_3^m \vro\|_{L^2}\\
&\; \times \|  \p_3^{m-1} (\p_1^2 u_3 + \p_2^2 u_3  + \p_3 \du+\p_2 b_3  + (F_2)_3 ) \|_{L^2} \\
&\; + \| (\vro, \p_3 \vro)\|_{L^{2}}^{\f14} \|\nabla_h (\vro, \p_3 \vro)\|_{H_{tan}^1}^{\f34} \| \nabla_h \cdot u_h\|_{H^2}  \| \p_3^m \vro\|_{L^2}\|  \p_3^{m}  b_2\|_{L^2}^{\f12} \| \p_1 \p_3^{m}  b_2\|_{L^2}^{\f12}\\
\lesssim &\; \sqrt{\me(t)} \md(t).
\end{align*}
Similar to the estimate $II_{2}$, we can immediately obtain 
\beqq 
  II_{4,3}  \lesssim  \sqrt{\me(t)} \md(t),
\deqq
since we have more terms than $II_2$.
Thus, combining the estimates from $II_{4,1}$ to $II_{4,4}$, 
\beq\label{Nlinear2-09}
II_4 \le  \f{d}{dt} \i \vro \, \nabla_h \cdot u_h \, \p_3^{m} \vro \, \p_3^{m-1} u_3 \, dx + C  \sqrt{\me(t)} \md(t).
\deq
{\bf Estimate of the terms $II_{5}-II_{7}$.} 
Similar to the estimate of the terms $I_{5}-I_{7}$, we can check that
\beqq
II_5+ II_6+II_7 \lesssim \sqrt{\me(t)} \md(t).
\deqq
Substituting above estimate, \eqref{Nlinear2-03}, \eqref{Nlinear2-04}, \eqref{Nlinear2-08} and \eqref{Nlinear2-09} into \eqref{Nlinear2-02}, we finally obtain that
\beqq \bal 
  \i \nabla_h \cdot u_h \, |\p_3^m \vro|^2 \, dx 
 \le &\;   \f{d}{dt} \i (1+\vro) \nabla_h \cdot u_h \, \p_3^{m} \vro \, \p_3^{m-1} u_3 \, dx +\f{d}{dt} \i \nabla_h \cdot u_h \, \p_3^m b_2 \, \p_3^{m-1} u_3 \, dx \\
&\;+ \f{d}{dt} \i (b_2 -\vro)\,  |\p_3^m b_2|^2 \, dx  + \f{d}{dt}  \i  (1+ \vro) (b_2 -\vro) \,|\p_3^m u_2|^2  \, dx \\
&\; - 2 \f{d}{dt}  \i (1+ \vro) \, (b_2 - \vro) \,  \p_3^{m-1} \p_2 u_3 \, \p_3^m  u_2 \, dx + C \sqrt{\me(t)} \md(t),
\dal \deqq
which yields the estimate \eqref{Nlinear2-01-1}. We have already obtained the estimate $\eqref{Nlinear2-01-2}$,  see \eqref{Nlinear2-tar} for details. Similar to the proof of  \eqref{Nlinear2-tar}, we obtain that
\beqq \bal
\i \p_2 u_2 \,  |\p_3^m b_1|^2 \, dx  
\le &\;  \f{d}{dt} \i (b_2 -\vro)\,  |\p_3^m b_1|^2 \, dx + \f{d}{dt}  \i  (1+ \vro) (b_2 -\vro) \,|\p_3^m u_1|^2  \, dx \\
&\;- 2 \f{d}{dt}  \i (1+ \vro) \, (b_2 - \vro) \,  \p_3^{m-1} \p_1  u_3 \, \p_3^m  u_1 \, dx   + C \sqrt{\me(t)} \md(t),
\dal \deqq
which yields the estimate \eqref{Nlinear2-01-3}. Thus, we complete the proof of this lemma.
     \end{proof}
     Finally, we estimate the difficult term involving two highest-order vertical derivatives of magnetic field.
     \begin{lemm}\label{lemm:Nlinear3}
     Under the assumption \eqref{assumption}, for any smooth solution $(\vro ,u, b)$ of equation \eqref{eqr}, it holds
     \beq \label{Nlinear3-01}\bal
       \i \p_2 u_1  \, \p_3^m b_1  \,  \p_3^m b_2 \,dx 
\le &\; \f{d}{dt}  \i  (2+ \vro) b_1 \, \p_3^m b_1 \,\p_3^m b_2  \, dx -  \f{d}{dt}  \i (1+ \vro) \, b_1 \,  \p_3^{m-1}  \p_2 u_3 \, \p_3^m u_1\, dx   \\
&\; -  \f{d}{dt}  \i (1+ \vro) \, b_1 \,  \p_3^{m-1} \p_1 u_3 \,  \p_3^m  u_2\, dx 
 + C \sqrt{\me(t)} \md(t).
       \dal \deq
     \end{lemm}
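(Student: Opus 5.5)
The plan is to follow the scheme used for $I_{3,3}$ in Lemma \ref{lemm:Nlinear1} and for \eqref{Nlinear2-tar} in Lemma \ref{lemm:Nlinear2}: trade the bad factor $\p_2 u_1$ for a time derivative of the magnetic field. The first component of $\eqref{eqr}_3$ contains no $e_2$-term, so
\[
\p_2 u_1=\p_t b_1-\p_1^2 b_1-(F_3)_1 .
\]
Substituting this and moving the time derivative off $b_1$ gives
\[
\i \p_2 u_1\,\p_3^m b_1\,\p_3^m b_2\,dx=\f{d}{dt}\i b_1\,\p_3^m b_1\,\p_3^m b_2\,dx-\i b_1\,\p_t(\p_3^m b_1\,\p_3^m b_2)\,dx-\i(\p_1^2 b_1+(F_3)_1)\,\p_3^m b_1\,\p_3^m b_2\,dx .
\]
The last integral is handled like $K_6$, using one $x_1$-derivative of $b$ and the anisotropic inequalities \eqref{ie:Sobolev}. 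Next I replace $\p_t\p_3^m b_1$ and $\p_t\p_3^m b_2$ using $\eqref{eqr}_3$. The contributions from $\p_1^2 b$, $\du$, the commutators and the transport part $u\cdot\nabla b$ are treated as $K_1,K_2+K_7,K_3$. There the transport part cancels against $\i u\cdot\nabla b_1\,\p_3^m b_1\,\p_3^m b_2\,dx$ after an integration by parts. These pieces give $C\sqrt{\me(t)}\md(t)$.

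Two groups of terms survive.

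\emph{Linear group:} $-\i b_1\,(\p_3^m b_2\,\p_3^m\p_2 u_1+\p_3^m b_1\,\p_3^m\p_2 u_2)\,dx$.

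\emph{Quasilinear group:} $-\i b_1\,(\p_3^m b_2\, b\cdot\nabla\p_3^m u_1+\p_3^m b_1\, b\cdot\nabla\p_3^m u_2)\,dx$.

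For the quasilinear group I integrate by parts along $b$, using ${\rm div}\,b=0$ exactly as in \eqref{Nlinear1-equ}. This produces $\i b_1\,\p_3^m u_1\, b\cdot\nabla\p_3^m b_2\,dx$, $\i b_1\,\p_3^m u_2\, b\cdot\nabla\p_3^m b_1\,dx$, and harmless terms containing $b\cdot\nabla b_1$. The two derivative-losing terms are removed by the nonlinear cancellation mechanism. I test $\p_3^m$ of the $u_2$- and $u_1$-components of the momentum equation, written in the form $\p_t u=\f{1}{1+\vro}b\cdot\nabla b+\cdots$, against $(1+\vro)b_1\,\p_3^m u_1$ and $(1+\vro)b_1\,\p_3^m u_2$ respectively. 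This yields $\f{d}{dt}\i(1+\vro)b_1\,\p_3^m u_1\,\p_3^m u_2\,dx$, together with exactly the Lorentz contributions needed to cancel the loss of vertical derivative.

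The same testing also brings in the linear coupling of the momentum equation. In $u_1$ this is $\p_2 b_1-\p_1 b_2$; in $u_2$ the magnetic coupling cancels and only $-(1+\vro)\p_2\vro$ remains. The term $\i(1+\vro)b_1\,\p_3^m u_2\,\p_3^m\p_2 b_1\,dx$ is meant to pair with the linear group after integrating by parts in $x_2$. The pairing should produce the extra "$+1$" that turns $b_1$ into $(2+\vro)b_1$ in the energy correction $\i(2+\vro)b_1\,\p_3^m b_1\,\p_3^m b_2\,dx$ of \eqref{Nlinear3-01}. The $\p_1 b_2$ piece and the remaining pieces carry an $x_1$-derivative of $b$ or an $x_2$-derivative of $b_1$ and are dissipative.

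The remaining pressure-type terms are $\i(1+\vro)^2 b_1\,\p_3^m\p_2\vro\,\p_3^m u_1\,dx$, and the analogous one with $\p_3^m\p_1\vro$ and $\p_3^m u_2$. I treat them as in \eqref{Nlinear1-07}: substitute $(1+\vro)\p_3\vro=-\p_t u_3+\cdots$. This generates the corrections $-\f{d}{dt}\i(1+\vro)b_1\,\p_3^{m-1}\p_2 u_3\,\p_3^m u_1\,dx$ and $-\f{d}{dt}\i(1+\vro)b_1\,\p_3^{m-1}\p_1 u_3\,\p_3^m u_2\,dx$. The leftover terms are estimated like $J_1$ through $J_6$, using \eqref{Nlinear1-help-1} and \eqref{Nlinear1-help-2}.

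The main obstacle is the linear group. A single $x_2$-derivative of $\p_3^m b$ is not in $\md(t)$, since $\p_2 b$ is controlled only in $H^{m-1}$. Using ${\rm div}\,b=0$ to rewrite $\p_2\p_3^m b_2$ would lose a vertical derivative. Everything therefore rests on arranging the momentum-equation testing so that the linear magnetic coupling cancels these terms exactly. If that bookkeeping fails, my fallback is to symmetrize: use the $x_2$-antisymmetry $\i b_1(f\,\p_2 g+g\,\p_2 f)\,dx=-\i\p_2 b_1\,fg\,dx$, so that only $\p_2 b_1$ (which is dissipative) is left multiplying the product of the two top-order factors.
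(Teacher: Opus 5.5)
Your overall route is the paper's. You substitute $\p_2u_1=\p_t b_1-\p_1^2 b_1-(F_3)_1$ and discard the $\p_1^2 b$, transport and $\du$ contributions. You integrate the $b\cdot\nabla\p_3^m u$ terms by parts using ${\rm div}\,b=0$. You cancel the two derivative-losing terms by testing $\p_3^m$ of the $u_2$- and $u_1$-equations against $(1+\vro)b_1\p_3^m u_1$ and $(1+\vro)b_1\p_3^m u_2$. You convert the pressure terms through $(1+\vro)\p_3\vro=-\p_t u_3+\cdots$ into the two $\p_3^{m-1}\p_j u_3$ corrections. The gap lies in your ``main obstacle'' and your ``$+1$''.

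\textbf{The linear group is not an obstacle.} It is $-\int b_1(\p_3^m b_2\,\p_2\p_3^m u_1+\p_3^m b_1\,\p_2\p_3^m u_2)\,dx$, so it carries $\p_2\p_3^m u$, not $\p_2\p_3^m b$. The quantity $\p_2\p_3^m u$ belongs to $\md(t)$ through $\|\nabla_h u\|_{H^m}^2$. Applying $\eqref{ie:Sobolev}_2$ with the factors $\|\p_2\p_3^m u\|_{L^2}$, $\|\p_3^m b\|_{L^2}^{1/2}\|\p_1\p_3^m b\|_{L^2}^{1/2}$ and $\|b\|_{L^2}^{1/4}\|\p_2 b\|_{L^2}^{1/4}\|\p_3 b\|_{L^2}^{1/4}\|\p_{23} b\|_{L^2}^{1/4}$ bounds it directly by $C\sqrt{\me(t)}\md(t)$. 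This is exactly how the paper treats $III_2$.

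The only term containing $\p_2\p_3^m b$ is the one your own testing of the $u_1$-equation produces from the coupling $\p_2 b_1-\p_1 b_2$. To leading order it is $\int b_1\,\p_3^m u_2\,\p_2\p_3^m b_1\,dx$. One integration by parts in $x_2$ (your fallback) turns it into $-\int \p_2 b_1\,\p_3^m u_2\,\p_3^m b_1\,dx-\int b_1\,\p_2\p_3^m u_2\,\p_3^m b_1\,dx$. Both pieces are $\lesssim\sqrt{\me(t)}\md(t)$, by $\eqref{ie:Sobolev}_3$ and $\eqref{ie:Sobolev}_2$ respectively. None of these steps produces a time derivative. So no ``pairing'' can generate the extra $\frac{d}{dt}\int(1+\vro)b_1\,\p_3^m b_1\,\p_3^m b_2\,dx$ you need to reach $(2+\vro)$.

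\textbf{What your argument actually proves.} It gives \eqref{Nlinear3-01} with $\int(2+\vro)b_1\,\p_3^m b_1\,\p_3^m b_2\,dx$ replaced by $\int b_1\,\p_3^m b_1\,\p_3^m b_2\,dx+\int(1+\vro)b_1\,\p_3^m u_1\,\p_3^m u_2\,dx$. These two functionals cannot be traded for each other. The time derivatives of $\int(1+\vro)b_1\,\p_3^m b_1\,\p_3^m b_2\,dx$ and of $\int(1+\vro)b_1\,\p_3^m u_1\,\p_3^m u_2\,dx$ both contain $\int w\,(\p_3^{m+1}u_1\,\p_3^m b_2+\p_3^m b_1\,\p_3^{m+1}u_2)\,dx$. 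The weights are $w=(1+\vro)b_1b_3$ and $w=-b_1b_3$ respectively. Since $\p_3^{m+1}u_h$ lies in neither $\me$ nor $\md$, their difference is not controlled.

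\textbf{The stated coefficient appears to be a misprint.} The paper records the outcome of the cancellation for $III_{4,1}+III_{4,2}$ as $\frac{d}{dt}\int(1+\vro)b_1\,\p_3^m b_1\,\p_3^m b_2\,dx$, citing similarity to $I_{3,3,5}$. But that mechanism tests the momentum equation and so produces a velocity functional. Compare $(1+\vro)\vro|\p_3^m u_2|^2$ in $I_{3,3,5}$, and the paired $b$- and $u$-functionals in \eqref{Nlinear1-01-2} and \eqref{Nlinear2-01-2}. So the $(2+\vro)$ in \eqref{Nlinear3-01}, and the matching term in $F$, appear to be a misprint for $b_1\,\p_3^m b_1\,\p_3^m b_2+(1+\vro)b_1\,\p_3^m u_1\,\p_3^m u_2$. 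The correction is harmless for Proposition \ref{main_pro}, because the corrected functional is still $O(\sqrt{\me}\,\me)$. You should drop the pairing argument and the attempt to reproduce $(2+\vro)$. State and prove the inequality with the functional your cancellation actually yields.
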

     \begin{proof}
        Substituting the equation of $\p_2 u_1$ 
        \beqq
      \p_2 u_1 = \p_t b_1 - \p_1^2 b_1 + u \cdot \nabla b_1 - b \cdot \nabla u_1 + b_1 \, \du
        \deqq
        into the term $ \i \p_2 u_1  \, \p_3^m b_1  \,  \p_3^m b_2 \,dx $, we have
        \begin{align} \label{Nlinear3-02} 
   &\; \i \p_2 u_1  \, \p_3^m b_1  \,  \p_3^m b_2 \,dx \notag\\
   =&\; \f{d}{dt} \i b_1 \,  \p_3^m b_1 \, \p_3^m b_2 \, dx - \i b_1 \,  \p_3^m \p_t b_1 \, \p_3^m b_2 \, dx - \i b_1 \,  \p_3^m b_1 \, \p_3^m \p_t b_2 \, dx \notag\\
   &\; - \i (\p_1^2 b_1 - u \cdot \nabla b_1 + b \cdot \nabla u_1 - b_1 \du) \,  \p_3^m b_1 \, \p_3^m b_2 \, dx \notag\\
   =&\; \f{d}{dt} \i b_1 \,  \p_3^m b_1 \, \p_3^m b_2 \, dx - \i b_1 \,  (\p_3^m \p_1^2 b_1  \, \p_3^m b_2 +  \p_3^m \p_1^2 b_2 \, \p_3^m b_1) \, dx \notag\\
   &\; - \i b_1 \,  \{\p_3^m  \p_2 u_1 \, \p_3^m b_2 +  \p_3^m (-\du + \p_2 u_2 ) \, \p_3^m b_1  \} \, dx + \i b_1 \,  \{\p_3^m (u \cdot \nabla b_1)  \, \p_3^m b_2 +  \p_3^m (u \cdot \nabla b_2) \, \p_3^m b_1  \} \, dx \notag\\
   &\; - \i b_1 \,  \{\p_3^m (b \cdot \nabla u_1)  \, \p_3^m b_2 +  \p_3^m (b \cdot \nabla u_2) \, \p_3^m b_1  \} \, dx  + \i b_1 \,  \{\p_3^m (b_1 \, \du)  \, \p_3^m b_2 +  \p_3^m (b_2 \, \du) \, \p_3^m b_1 \} \, dx \notag\\
   &\; - \i \p_1^2 b_1  \,  \p_3^m b_1 \, \p_3^m b_2 \, dx
   + \i (u \cdot \nabla b_1 - b \cdot \nabla u_1 + b_1 \du) \,  \p_3^m b_1 \, \p_3^m b_2 \, dx \notag\\
   := &\; \f{d}{dt} \i b_1 \,  \p_3^m b_1 \, \p_3^m b_2 \, dx +\sum_{i=1}^{7} III_i.
        \end{align}
    Using the anisotropic type inequality \eqref{ie:Sobolev} and integrating by parts, it is easy to check that
    \begin{align*}
         III_1 +III_6 
        = &\;  2 \i b_1 \p_3^m \p_1 b_1 \, \p_3^m \p_1 b_2  \, dx + 2\i \p_1 b_1 \, \p_1 (\p_3^m b_1 \, \p_3^m  b_2 ) \, dx \\
        \lesssim &\; \| b\|_{L^{\infty}} \| \p_3^m \p_1 b\|_{L^2}^2  + \|\p_1 b\|_{H_{tan}^1}^{\f12}\|\p_{13} b\|_{H_{tan}^1}^{\f12} \| \p_3^m b\|_{L^2}^{\f12} \| \p_1 \p_3^m b\|_{L^2}^{\f32}\\
        \lesssim &\; \sqrt{\me(t)} \md(t), \\
        III_2  
        \lesssim &\; \| b\|_{L^2}^{\f14} \| \p_3 b\|_{L^2}^{\f14}\| \p_2 b\|_{L^2}^{\f14} \| \p_{23} b\|_{L^2}^{\f14}  \|\p_3^m b\|_{L^2}^{\f12}   \|\p_1 \p_3^m b\|_{L^2}^{\f12} (\| \p_3^m \p_2 u\|_{L^2} + \| \p_3^m \du\|_{L^2})\\
         \lesssim &\; \sqrt{\me(t)} \md(t),
    \end{align*}
    and integrating by parts, we can check that
    \beqq \bal
III_3 = &\;  \i b_1 \, u \cdot \nabla(\p_3^m b_1 \, \p_3^m b_2 )\, dx + \sum_{1\le l \le m} C_{m}^{l} \i b_1 \, (\p_3^l u \cdot \nabla \p_3^{m-l} b_1 \, \p_3^{m} b_2 + \p_3^l u \cdot \nabla \p_3^{m-l} b_2 \, \p_3^{m} b_1) \, dx\\
= &\;- \i( b_1 \, \du + u \cdot \nabla b_1) \, \p_3^m b_1 \, \p_3^m b_2 \, dx \\
&\; + \sum_{1\le l \le m} C_{m}^{l} \i b_1 \, (\p_3^l u \cdot \nabla \p_3^{m-l} b_1 \, \p_3^{m} b_2 + \p_3^l u \cdot \nabla \p_3^{m-l} b_2 \, \p_3^{m} b_1) \,dx,
\dal \deqq
which yields that
\begin{align*} 
III_3 \lesssim &\; \| \p_3^m b\|_{L^2} \| \p_1 \p_3^m b\|_{L^2} (\| b\|_{L^2}^{\f14} \| \p_3 b\|_{L^2}^{\f14}\| \p_2 b\|_{L^2}^{\f14} \| \p_{23} b\|_{L^2}^{\f14} \| \du \|_{L^2}^{\f14} \| \p_3 \du\|_{L^2}^{\f14}\| \p_2 \du\|_{L^2}^{\f14} \| \p_{23} \du\|_{L^2}^{\f14} \\
&\; + \| u\|_{L^2}^{\f14} \| \p_3 u\|_{L^2}^{\f14}\| \p_2 u\|_{L^2}^{\f14} \| \p_{23} u\|_{L^2}^{\f14} \| \nabla b_1 \|_{L^2}^{\f14} \| \p_3 \nabla b_1\|_{L^2}^{\f14}\| \p_2 \nabla b_1\|_{L^2}^{\f14} \| \p_{23} \nabla b_1\|_{L^2}^{\f14} )\\
&\; +\| b\|_{L^2}^{\f14} \| \p_3 b\|_{L^2}^{\f14}\| \p_2 b\|_{L^2}^{\f14} \| \p_{23} b\|_{L^2}^{\f14}  \|\p_3^m b\|_{L^2}^{\f12}   \|\p_1 \p_3^m b\|_{L^2}^{\f12} \\
&\;\times (\|\p_3^m u_h\|_{L^2}^{\f12}  \|\p_1 \p_3^m u_h\|_{L^2}^{\f12} \|\nabla_h b\|_{H^2} + \|\p_3^m u_3\|_{L^2}^{\f12}  \|\p_{3}^{m+1} u_3\|_{L^2}^{\f12} \|\p_3 b\|_{L^2}^{\f14} \| \nabla_h b\|_{L^2}^{\f12} \| \nabla_h^2 b\|_{L^2}^{\f14} \\
&\; + \sum_{1\le l \le m-1} \|\p_3^l u\|_{L^2}^{\f14} \| \p_3^{l+1} u\|_{L^2}^{\f14}  \|\p_2 \p_3^{l} u\|_{L^2}^{\f14} \| \p_2 \p_3^{l+1} u\|_{L^2}^{\f14} \| \nabla \p_3^{m-l} b\|_{L^2}^{\f12} \| \p_1 \nabla \p_3^{m-l} b\|_{L^2}^{\f12}) \\
 \lesssim &\; \sqrt{\me(t)} \md(t).
\end{align*}
In a similar way, we can check
\beqq
   III_5 + III_7 \lesssim  \sqrt{\me(t)} \md(t). 
    \deqq
    Finally, it remains to  estimate the term $III_4$, which is somewhat difficult. Integrating by parts and using the condition ${\rm div} b=0$, we have
    \beqq \bal
    III_4 = &\; -\i b_1 \, b \cdot \nabla \p_3^m u_1 \cdot \, \p_3^m b_2 \, dx - \i b_1 \, b \cdot \nabla \p_3^m u_2 \cdot \, \p_3^m b_1 \, dx  \\
    &\; - \sum_{1\le l \le m} C_{m}^{l} \i b_1 \, (\p_3^l b \cdot \nabla \p_3^{m-l} u_1 \, \p_3^{m} b_2 + \p_3^l b \cdot \nabla \p_3^{m-l} u_2 \, \p_3^{m} b_1) \, dx\\
= &\; \i b_1 \, \p_3^m u_1 \,  b \cdot  \nabla \p_3^m b_2 \, dx + \i b_1 \, \p_3^m u_2 \,  b \cdot  \nabla \p_3^m b_1  \, dx  + \i  \p_3^m u_1 \, b \cdot \nabla b_1 \, \p_3^m b_2   \, dx \\
&\;+ \i \p_3^m u_2 \,  b \cdot \nabla b_1 \, \p_3^m b_1  \, dx    -\sum_{1\le l \le m} C_{m}^{l} \i b_1 \, (\p_3^l b \cdot \nabla \p_3^{m-l} u_1 \, \p_3^{m} b_2 + \p_3^l b \cdot \nabla \p_3^{m-l} u_2 \, \p_3^{m} b_1) \, dx \\
: =&\; \sum_{i=1}^{5} III_{4,i}.
    \dal \deqq
Using the anisotropic type inequality \eqref{ie:Sobolev}, we have
\beqq \bal 
III_{4,3} + III_{4,4} \lesssim &\; \| \p_3^m u\|_{L^2}^{\f12} \| \p_1 \p_3^m u\|_{L^2}^{\f12} \| b\|_{L^2}^{\f14} \| \p_3 b\|_{L^2}^{\f14}\| \p_2 b\|_{L^2}^{\f14} \| \p_{23} b\|_{L^2}^{\f14}  \\
&\; \times  \| \nabla b_1 \|_{L^2}^{\f14} \| \p_3 \nabla b_1\|_{L^2}^{\f14}\| \p_2 \nabla b_1\|_{L^2}^{\f14} \| \p_{23} \nabla b_1\|_{L^2}^{\f14}   \| \p_3^m b\|_{L^2}^{\f12} \| \p_1 \p_3^m b\|_{L^2}^{\f12}\\
 \lesssim &\; \sqrt{\me(t)} \md(t), \\
 III_{4,5} \lesssim &\;  (\sum_{1\le l \le m-1} \| \p_3^{l} b\|_{L^2}^{\f14} \| \p_3^{l+1} b\|_{L^2}^{\f14}\| \p_1 \p_3^{l} b\|_{L^2}^{\f14} \|  \p_1 \p_3^{l+1} b \|_{L^2}^{\f14} \| \p_3^{m-l}  \nabla u\|_{L^2}^{\f12} \| \p_2 \p_3^{m-l}  \nabla u\|_{L^2}^{\f12}  \\
 &\; +  \| \p_3^{m} b\|_{L^2}^{\f12} \| \p_1 \p_3^{l} b\|_{L^2}^{\f12} \| \nabla u\|_{L^2}^{\f14}\| \p_3 \nabla u\|_{L^2}^{\f14}   \| \p_2 \nabla u\|_{L^2}^{\f14} \| \p_{23} \nabla u\|_{L^2}^{\f14} )\\ 
&\; \times  \|  b_1 \|_{L^2}^{\f14} \| \p_3 b_1\|_{L^2}^{\f14}\| \p_2 b_1\|_{L^2}^{\f14} \| \p_{23} b_1\|_{L^2}^{\f14}   \| \p_3^m b\|_{L^2}^{\f12} \| \p_1 \p_3^m b\|_{L^2}^{\f12}\\
 \lesssim &\; \sqrt{\me(t)} \md(t).
\dal \deqq
It remains to estimate the term $III_{4,1}$ and $III_{4,2}$. Similar to the estimate of $I_{3,3,5}$, integrating by parts, we can check 
\beqq \bal
III_{4,1} +III_{4,2} 
\le &\; \f{d}{dt}  \i  (1+ \vro) \, b_1 \, \p_3^m b_1 \,\p_3^m b_2  \, dx  -  \f{d}{dt}  \i (1+ \vro) \, b_1 \,  \p_3^{m-1} \p_2 u_3 \, \p_3^m  u_1 \, dx  \\
&\; -  \f{d}{dt}  \i (1+ \vro) \, b_1 \,  \p_3^{m-1} \p_1 u_3 \,   \p_3^m u_2 \, dx  + C \sqrt{\me(t)} \md(t),
\dal \deqq
which, together with the above estimates, yields that
\beqq \bal
III_{4} \le &\;\f{d}{dt}  \i  (1+ \vro) \, b_1 \, \p_3^m b_1 \,\p_3^m b_2  \, dx  -  \f{d}{dt}  \i (1+ \vro) \, b_1 \,  \p_3^{m-1} \p_2 u_3 \, \p_3^m  u_1 \, dx  \\
&\; -  \f{d}{dt}  \i (1+ \vro) \, b_1 \,  \p_3^{m-1} \p_1 u_3 \,   \p_3^m u_2 \, dx  + C \sqrt{\me(t)} \md(t).
\dal \deqq
Substituting the estimates from $III_1$ to $III_7$ into the equality \eqref{Nlinear3-02}, we obtain that
\beqq \bal 
 \i \p_2 u_1  \, \p_3^m b_1  \,  \p_3^m b_2 \,dx 
\le &\; \f{d}{dt}  \i   (2+\vro) \, b_1 \, \p_3^m b_1 \,\p_3^m b_2  \, dx -  \f{d}{dt}  \i (1+ \vro) \, b_1 \,  \p_3^{m-1} \p_2 u_3 \, \p_3^m  u_1 \, dx   \\
&\; - \f{d}{dt}  \i (1+ \vro) \, b_1 \,  \p_3^{m-1} \p_1 u_3 \,   \p_3^m u_2 \, dx + C \sqrt{\me(t)} \md(t). 
\dal \deqq
Therefore, we finish the proof of this lemma.
     \end{proof}
     
     \subsubsection{Estimates for the vertical derivative}
     With the Lemmas \ref{lemm:Nlinear1}-\ref{lemm:Nlinear3} in hand, we now establish the estimates for the vertical derivative. 
        \begin{lemm}\label{lemma32}
            Under the assumption \eqref{assumption}, for any smooth solution $(\vro ,u, b)$ of equation \eqref{eqr}, it holds
            \beq \label{p3m01}
            \f{d}{dt} \i \{  |\p_3^m \vro|^2 + (1+\vro) |\p_3^m u|^2 +  |\p_3^m b|^2 \} \, dx +\f{d}{dt} F(\vro, u, b) +\|\p_3^m( \nabla_h  u, \du, \p_1 b) \|_{L^2}^2  
\lesssim  \sqrt{\me(t)} \md(t),
\deq
where 
\begin{align}\label{def-F}
F(\vro, u, b):= &\; - \i \p_3^{m-1}  u_3  \, (\p_1^2 u+ \p_2^2 u + \nabla \du - \nabla b_2 + \p_2 b)_h \cdot \p_3^m u_h \, dx \notag \\
&\; - m \i \nabla_h \cdot u_h \, \p_3^{m} \vro \, \p_3^{m-1} u_3 \, dx+\f12 \i \vro \,(|\p_3^m \vro|^2-|\p_3^m b_1|^2 -|\p_3^m b_2|^2) \, dx\notag \\
&\;-(m+1) \i \du \, (\p_3^{m} \vro - \p_3^m b_2) \, \p_3^{m-1} u_3 \, dx -m \i \nabla_h \cdot u_h \, \p_3^m b_2 \, \p_3^{m-1} u_3 \, dx \notag \\
&\; - \i b_2 \,( m|\p_3^m b_1 |^2 + (2m+1) |\p_3^m b_2 |^2)\, dx  - m \i \p_3 u_h \cdot \nabla_h \p_3^{m-1} \vro \, \p_3^{m-1} u_3 \, dx \notag \\
&\; - \f12  \i  (1+ \vro) \vro \,(|\p_3^m u_1|^2+ |\p_3^m u_2|^2)  \, dx  +   \i (1+ \vro)\, \vro \,  \p_3^{m-1} \p_1 u_3  \, \p_3^m  u_1 \, dx \notag \\
&\; + \i (1+ \vro)\,  \vro \, \p_3^{m-1} \p_2 u_3 \, \p_3^m u_2\, dx -   \i  (1+ \vro)\, b_2 \,( m|\p_3^m u_1|^2 +  (2m+1)|\p_3^m u_2|^2 ) \, dx \notag\\
&\;+  2m \i (1+ \vro) \,  b_2 \,  \p_3^{m-1} \p_1  u_3 \,  \p_3^m u_1 \, dx +  2(2m+1) \i (1+ \vro) \,b_2\, \p_3^{m-1} \p_2 u_3 \,    \p_3^m  u_2 \, dx\notag \\
&\;   - \i  (2+ \vro) \, b_1 \, \p_3^m b_1 \,\p_3^m b_2  \, dx  +    \i (1+ \vro) \, b_1 \,  (\p_3^{m-1} \p_2 u_3 \, \p_3^m  u_1 + \p_3^{m-1} \p_1 u_3 \,   \p_3^m u_2  ) \, dx.
\end{align}
        \end{lemm}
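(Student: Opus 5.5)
We carry out a weighted $L^2$ energy estimate at the top vertical level. We apply $\p_3^m$ to the three equations of \eqref{eqr}, written in the form obtained from \eqref{eqr0}: we keep $(1+\vro)\p_t u$ on the left, and we keep the pressure as $(1+\vro)\nabla\vro$ so that the density and velocity equations pair symmetrically. We test the density equation with $\p_3^m\vro$, the momentum equation with $\p_3^m u$ (the weight $1+\vro$ sits in front of $\p_t\p_3^m u$), and the induction equation with $\p_3^m b$. The linear couplings cancel: $\nabla\vro$ against ${\rm div}\,u$ after integration by parts, and $\nabla b_2-\p_2 b$ against $e_2{\rm div}\,u-\p_2 u$. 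The viscous and resistive terms produce the dissipation $\|\p_3^m(\nabla_h u,{\rm div}\,u,\p_1 b)\|_{L^2}^2$ on the left of \eqref{p3m01}.

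We then split the nonlinear terms by Leibniz. All commutator terms in which at most $m$ derivatives fall on each factor, and at least one horizontal derivative or one factor of $\nabla_h\vro$, $\p_2 b$, $\p_1 b$, $\nabla_h u$, ${\rm div}\,u$ is available, are bounded by $\sqrt{\me(t)}\md(t)$ via Lemma \ref{lemm:sobolev-ie}. Here $\p_t\vro$, $\p_t u_h$ are controlled using \eqref{Nlinear1-help-1}--\eqref{Nlinear1-help-2}.

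The top-order transport pieces $u\cdot\nabla\p_3^m(\cdot)$ are integrated by parts and leave terms of the form ${\rm div}\,u\,|\p_3^m(\vro,u,b)|^2$. The $u$-part is harmless because $\p_3^m u$ is paired with $\p_1\p_3^m u$. The Lorentz terms $b\cdot\nabla\p_3^m u\cdot\p_3^m b$ and $\frac{1}{1+\vro}b\cdot\nabla\p_3^m b\cdot\p_3^m u$ cancel exactly thanks to ${\rm div}\,b=0$ and the weight $1+\vro$, as explained in Step 2.

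The genuinely critical remainders come from three sources. First, $(1+\vro)\p_3\vro\,\p_3^{m-1}\nabla\vro$-type commutators and the convection of $\vro$, with exactly $m$ vertical derivatives on $\vro$. Second, $-b\,{\rm div}\,u+b\cdot\nabla u$ in $F_3$, with top derivatives on $u$. Third, $\p_3 u\cdot\nabla\p_3^{m-1}\vro$. Computing the coefficients produced by Leibniz (multiplicities $m$, $m+1$, $2m+1$), we isolate the bad terms as linear combinations of
\[
-\i {\rm div}\,u\,|\p_3^m\vro|^2,\quad \i\nabla_h\cdot u_h\,|\p_3^m\vro|^2,\quad -\i{\rm div}\,u\,|\p_3^m b_h|^2,\quad \i\p_2u_2\,|\p_3^m b_h|^2,\quad \i\p_2u_1\,\p_3^m b_1\,\p_3^m b_2 .
\]
The term $-m\i\p_3 u_h\cdot\nabla_h\p_3^{m-1}\vro\,\p_3^{m-1}u_3$ in $F$ indicates that one term, $\p_3u_h\cdot\nabla_h\p_3^{m-1}\vro\,\p_3^m\vro$, must first be rewritten through the density equation. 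We do this by trading $\p_3^m\vro$ for $\p_3^{m-1}$ of the $u_3$-equation and pulling out a time derivative, as in \eqref{Nlinear1-02}. The term involving $(\p_1^2u+\p_2^2u+\nabla{\rm div}u-\nabla b_2+\p_2 b)_h$ in $F$ arises in the same way, from substituting the $u_h$ equation for the $\p_t$ term.

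Finally, we invoke Lemma \ref{lemm:Nlinear1} (both \eqref{Nlinear1-01-1} and \eqref{Nlinear1-01-2}), Lemma \ref{lemm:Nlinear2} (\eqref{Nlinear2-01-1}--\eqref{Nlinear2-01-3}), and Lemma \ref{lemm:Nlinear3}, each with the coefficient found above. We move all resulting $\frac{d}{dt}$ terms to the left, where they assemble into $\frac{d}{dt}F(\vro,u,b)$. The remaining errors are $O(\sqrt{\me(t)}\md(t))$. This yields \eqref{p3m01}.

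The main obstacle is the bookkeeping in this last step. The exact multiplicities of the critical terms must be extracted so that the combination of the lemmas reproduces \eqref{def-F} precisely. In particular, the $|\p_3^m b_2|^2$ terms collect contributions from ${\rm div}\,u$, from $\p_2u_2$, and from the $e_2$-component of $b\,{\rm div}\,u$. This produces the $2m+1$ weights, and the $b_2$ and $\vro$ weights must combine consistently as $b_2-\vro$. I would verify these coefficients by tracking separately the terms where all $m$ derivatives hit one factor and where exactly one hits $u$.
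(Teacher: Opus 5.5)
Your overall architecture matches the paper's proof. That means the weighted $\partial_3^m$ energy identity, exact cancellation of the Lorentz terms via ${\rm div}\,b=0$ and the weight $1+\varrho$, reduction to Lemmas \ref{lemm:Nlinear1}--\ref{lemm:Nlinear3}, and collection of the time derivatives into $F$. The gap is that your inventory of critical terms is incomplete. The criterion you use to dismiss the rest is also false: one dissipative factor does not give $\sqrt{\mathcal{E}}\,\mathcal{D}$, and any term containing the undifferentiated $\partial_3^m\varrho$ (which tolerates no further derivative) must be checked individually. The main omission is the pressure. Your pairing is not symmetric: after multiplying by $1+\varrho$, the top-order pressure term is $(1+\varrho)^2\partial_3^m\nabla\varrho$, while the density equation tested with $\partial_3^m\varrho$ only supplies $(1+\varrho)\partial_3^m{\rm div}\,u$. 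The mismatch is
\[
\int_{\mathbb{R}^3}(1+\varrho)\varrho\,\partial_3^m\varrho\,\partial_3^m{\rm div}\,u\,dx .
\]
The anisotropic inequalities bound it only by roughly $\mathcal{E}^{5/8}\mathcal{D}^{7/8}$, since $\varrho$ must go into $L^\infty$. It is not one of your five terms, and the statements of the three lemmas do not cover it. The paper substitutes ${\rm div}\,u=-\partial_t\varrho-u\cdot\nabla\varrho-\varrho\,{\rm div}\,u$, which produces three things:
\begin{itemize}
\item the time derivative $-\frac12\frac{d}{dt}\int\varrho|\partial_3^m\varrho|^2$;
\item an extra $-\frac12\int{\rm div}\,u\,|\partial_3^m\varrho|^2$;
\item the cubic critical terms $m\int\varrho\,\nabla_h\cdot u_h|\partial_3^m\varrho|^2$ and $-(m+1)\int\varrho\,{\rm div}\,u\,|\partial_3^m\varrho|^2$, which require rerunning the arguments for $II_4$ and $I_4$.
\end{itemize}
Testing the density equation with $(1+\varrho)\partial_3^m\varrho$ does make the top-order pairing exact. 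But then every $|\partial_3^m\varrho|^2$ term carries the weight $1+\varrho$, so the same cubic terms appear. This mechanism is exactly what \eqref{def-F} encodes:
\begin{itemize}
\item the term $\frac12\int\varrho|\partial_3^m\varrho|^2$;
\item the coefficient $m+1$ on $\int{\rm div}\,u(\partial_3^m\varrho-\partial_3^m b_2)\partial_3^{m-1}u_3$, where the density equation alone gives $m+\frac12$;
\item the absence of any $\varrho^2|\partial_3^m\varrho|^2$ term, because the one coming from \eqref{Nlinear1-01-1} cancels;
\item the unweighted form of $-m\int\nabla_h\cdot u_h\,\partial_3^m\varrho\,\partial_3^{m-1}u_3$.
\end{itemize}
With your list of five terms, the bookkeeping cannot reproduce $F$.

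The second omission comes from the variable coefficient $\frac{\varrho}{1+\varrho}N_1$ in $F_2$, where $N_1=\partial_1^2u+\partial_2^2u+\nabla{\rm div}\,u-\nabla b_2+\partial_2 b$. When all $m$ derivatives fall on $\varrho$, it leaves $-\int\partial_3^m\varrho\,(N_1)_h\cdot\partial_3^m u_h\,dx$. Every factor of $N_1$ is dissipative, so your criterion would discard this term. Yet it is only $O(\mathcal{E}^{3/4}\mathcal{D}^{3/4})$, because $\partial_3^m u_h$ is not dissipative. By contrast, $\partial_3^m u_3=\partial_3^{m-1}({\rm div}\,u-\nabla_h\cdot u_h)$ is dissipative, which is why the $(N_1)_3$ part is harmless. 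The first term of $F$ comes from substituting the $u_3$-equation for $\partial_3\varrho$ in exactly this term. It does not come from substituting the $u_h$-equation into a $\partial_t$ term; the $u_h$-equation only enters afterwards, to bound $\partial_3^{m-1}\partial_t u_h$ and $\partial_t(N_1)_h$. Finally, the critical part of $b\cdot\nabla u-b\,{\rm div}\,u$ in $F_3$ is where all derivatives fall on $b$, giving $\int\partial_3^m b\cdot\nabla u\cdot\partial_3^m b$ and $-\int{\rm div}\,u\,|\partial_3^m b|^2$. The part that is top-order in $u$ is either cancelled by the Lorentz force or harmless.
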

        \begin{proof}
        Applying the $\p_3^m$-operator to the equation $\eqref{eqr}_2$ and multiplying by  $1+ \vro$,  we have
            \beqq \bal
            &\; (1+\vro) \p_t \p_3^m u -\p_{1}^2 \p_3^m  u-\p_{2}^2 \p_3^m  u-\nabla \p_3^m  {\rm div}u
				+\p_3^m (\nabla \vro+\nabla b_2-\p_2 b)\\
                 =&\;\vro\, (\p_{1}^2 \p_3^m  u+\p_{2}^2 \p_3^m  u+\nabla \p_3^m  {\rm div}u) - \vro \,\p_3^m (\nabla \vro+\nabla b_2-\p_2 b)  +(1+\vro)\p_3^m  F_2. 
            \dal \deqq
            Using the density equation $\eqref{eqr}_1$, it is easy to check that 
            \beqq \bal
			\i  (1+\vro) \p_t \p_3^m u \cdot \p_3^m u\, dx  = &\; \f{d}{dt} \f12 \i  (1+\vro) |\p_3^m u|^2 \, dx -\f12 \i  \p_t(1+\vro) |\p_3^m u|^2 \, dx \\
			= &\; \f{d}{dt} \f12 \i   (1+\vro) |\p_3^m u|^2 \, dx + \f12 \i |\p_3^m u|^2  (\du +u \cdot \nabla \vro + \vro \, \du)\, dx. 
			\dal \deqq
            Thus we have
			\begin{align*}
				&\f{d}{dt} \f12 \i   (1+\vro) |\p_3^m u|^2 \, dx  +\| \nabla_h \p_3^m u \|_{L^2}^2 
				+\i \p_3^m (\nabla \vro+\nabla b_2-\p_2 b) \cdot \p_3^m u \, dx\\
				=& -\f{1}{2}\i  |\p_3^m u|^2  (\du +u \cdot \nabla \vro + \vro \, \du)\, dx+\i \vro\, (\p_{1}^2 \p_3^m  u+\p_{2}^2 \p_3^m  u+\nabla \p_3^m  {\rm div}u)\cdot \p_3^m u \, dx\\
				&
				-\i \vro \, \p_3^m (\nabla \vro+\nabla b_2-\p_2 b) \cdot \p_3^m u\, dx + \i (1+\vro) \p_3^m  F_2 \cdot \p_3^m u\, dx,
			\end{align*}
            which, together with the equation $\eqref{eqr}_1$ and  $\eqref{eqr}_3$, yields that
\begin{align}\label{p3m-equ}
				&\;\f{d}{dt} \f12 \i (  |\p_3^m \vro|^2 + (1+\vro) |\p_3^m u|^2 +  |\p_3^m b|^2 ) \, dx  +\|\p_3^m( \nabla_h  u, \du, \p_1 b) \|_{L^2}^2  \notag \\
				=&\; -\f{1}{2}\i  |\p_3^m u|^2  (\du +u \cdot \nabla \vro + \vro \, \du)\, dx+\i \vro\, (\p_{1}^2 \p_3^m  u+\p_{2}^2 \p_3^m  u+\nabla \p_3^m  {\rm div}u)\cdot \p_3^m u \, dx \notag \\
				&\;
				-\i \vro \, \p_3^m \nabla \vro \cdot \p_3^m u\, dx -\i \vro \, \p_3^m (\nabla b_2-\p_2 b) \cdot \p_3^m u\, dx - \i (1+\vro) \p_3^m ( u \cdot \nabla u+\vro \nabla \vro) \cdot \p_3^m u\, dx \notag \\
                &\; - \i (1+\vro) \p_3^m\Big\{\frac{\vro}{1+\vro}
				(\p_{1}^2 u+\p_{2}^2 u+\nabla {\rm div}u -\nabla b_2+\p_2 b)\Big\} \cdot \p_3^m u\, dx \notag\\ 
                &\;  + \i (1+\vro) \p_3^m  \Big\{\frac{1}{1+\vro}b\cdot \nabla b
				-\frac{1}{2(1+\vro)}\nabla |b|^2\Big\} \cdot \p_3^m u\, dx  \notag \\
                &\;   -\i  \p_3^m (u \cdot \nabla \vro + \vro \, \du) \, \p_3^m \vro \, dx + \i  \p_3^m (- u \cdot \nabla b + b \cdot \nabla u- b \, \du ) \cdot \p_3^m b \, dx \notag\\
                := &\; \sum_{i=1}^{9} IV_{i}, 
			\end{align}
            where we have used the basic fact
			\beqq \bal
			& \i \p_3^m \nabla \vro \cdot \p_3^m u \, dx
			+\i \p_3^m \du \, \p_3^m \vro \, dx=0,\\
			& \i \p_3^m (\nabla b_2 - \p_2 b )\cdot \p_3^m u \, dx
			+\i \p_3^m (e_2 \du - \p_2 u )\cdot \p_3^m b \, dx=0.
			\dal \deqq
           We now consider the estimates for the nonlinear terms. Using the anisotropic type inequality \eqref{ie:Sobolev}, it holds 
            \beq \label{p3m-est01}\bal
           IV_1 \lesssim &\; \| \p_3^m u\|_{L^2} \| \p_1 \p_3^m u\|_{L^2}^{\f12} \|\p_2 \p_3^m u\|_{L^2}^{\f12} \| \du\|_{L^2}^{\f12} \| \p_3 \du\|_{L^2}^{\f12} (1+ \|\vro\|_{L^{\infty}}) \\
           &\; +  \| \p_3^m u\|_{L^2} \| \p_1 \p_3^m u\|_{L^2} \| u\|_{L^{2}}^{\f14} \| \p_2 u\|_{L^{2}}^{\f14} \| \p_3 u\|_{L^{2}}^{\f14} \|\p_{23} u\|_{L^{2}}^{\f14} \| \nabla \vro \|_{L^{2}}^{\f14} \| \p_2 \nabla \vro\|_{L^{2}}^{\f14}\| \p_3 \nabla \vro\|_{L^{2}}^{\f14}\| \p_{23} \nabla \vro\|_{L^{2}}^{\f14} \\
           \lesssim &\; \sqrt{\me(t)} \md(t).
           \dal \deq
           For simplicity, we denote that
                \beq\label{def-N1}
                N_1 := \p_{1}^2 u+\p_{2}^2 u+\nabla {\rm div}u -\nabla b_2+\p_2 b.
                \deq
                Using the anisotropic type inequality \eqref{ie:Sobolev}, we have 
           \begin{align} \label{p3m-246}
           &\; IV_2+IV_4+IV_6 \notag\\
           =&\; -\sum_{0 < l < m} C_{m}^{l} \i (1+\vro) \p_3^{l} (\frac{\vro}{1+\vro})
				\p_3^{m-l} N_1 \cdot \p_3^m u \, dx  - \i (1+\vro) \p_3^{m} (\frac{\vro}{1+\vro}) N_1 \cdot \p_3^m u \, dx 
                \notag\\
=&\; -\sum_{0 < l < m} C_{m}^{l} \i (1+\vro) \p_3^{l} (\frac{\vro}{1+\vro})
				\p_3^{m-l} N_1 \cdot \p_3^m u \, dx \notag\\
         &\; - \sum_{0 \le l < m} C_{m}^{l} \i (1+\vro) \p_3^{l}\vro \,  \p_3^{m-l}(\frac{1}{1+\vro}) N_1 \cdot \p_3^m u \, dx  \notag\\
                &\; - \i \p_3^{m}\vro \, (N_1)_3 \cdot \p_3^m u_3 \, dx  - \i \p_3^{m}\vro \, (N_1)_h \cdot \p_3^m u_h \, dx. 
                \end{align}
                It is easy to check that
                \begin{align} \label{p3m-246-1} 
                &\; -\sum_{0 < l < m} C_{m}^{l} \i (1+\vro) \p_3^{l} (\frac{\vro}{1+\vro})
				\p_3^{m-l} N_1 \cdot \p_3^m u \, dx \notag\\
\lesssim &\; \| \p_3^{m-1} N_1\|_{L^2}  \| \p_3^m u \|_{L^2}^{\f12} \| \p_1 \p_3^m u \|_{L^2}^{\f12} \| \p_3 \vro\|_{L^2}^{\f14}  \| \p_{23} \vro\|_{L^2}^{\f14} \| \p_3^2 \vro\|_{L^2}^{\f14} \| \p_{2} \p_3^2 \vro\|_{L^2}^{\f14}(1+ \|\vro\|_{L^{\infty}}) \notag\\
&\; + \sum_{1 < l < m}  \| \p_3^{l} (\frac{\vro}{1+\vro})\|_{L^2}^{\f12}  \|\p_1  \p_3^{l} (\frac{\vro}{1+\vro})\|_{L^2}^{\f12} \| \p_3^{m-l} N_1\|_{L^2}^{\f12} \| \p_3^{m-l+1} N_1\|_{L^2}^{\f12} \notag\\
&\; \quad\quad\quad\quad \times
\| \p_3^m u \|_{L^2}^{\f12} \| \p_2 \p_3^m u \|_{L^2}^{\f12} (1+ \|\vro\|_{L^{\infty}}) \notag\\
 \lesssim &\; \sqrt{\me(t)} \md(t),
     \end{align}
and similarly, we can check 
\beq \label{p3m-246-2}\bal
&\;  -\sum_{0 \le l < m} C_{m}^{l} \i (1+\vro) \p_3^{l}\vro \,  \p_3^{m-l}(\frac{1}{1+\vro}) N_1 \cdot \p_3^m u \, dx -
  \i \p_3^{m}\vro \, (N_1)_3 \cdot \p_3^m u_3 \, dx   \lesssim \sqrt{\me(t)} \md(t). 
\dal \deq
Clearly, the last term on the right-hand side of the equality \eqref{p3m-246} cannot be
directly controlled by $\sqrt{\me(t)} \md(t)$. In order to deal with  this problem, we substitute the equation 
\beqq \bal
\p_3 \vro = &\;  -\p_t u_3 +\p_1^2 u_3 + \p_2^2 u_3 + \p_3 \du+\p_2 b_3 - \p_3 b_2 + (F_2)_3\\
= &\;  -\p_t u_3 +\f{1}{1+\vro} (\p_1^2 u_3 + \p_2^2 u_3  + \p_3 \du+\p_2 b_3) \\
&\; + (\f{1}{1+\vro} \p_3 b_2 - u\cdot \nabla u_3 - \vro \p_3 \vro + \f{1}{1+\vro} (b \cdot \nabla b_3 - \f12 \p_3 (|b|^2)) \\
 := &\; -\p_t u_3  + N_2 +N_3
\dal 
\deqq
into the term and integrate by parts to get
\begin{align*}
&\; -\i \p_3^{m}\vro \, (N_1)_h \cdot \p_3^m u_h \, dx 
 =  -\i \p_3^{m-1} (-\p_t u_3  + N_2 +N_3) \, (N_1)_h \cdot \p_3^m u_h \, dx \\
 = &\; \f{d}{dt}  \i \p_3^{m-1}  u_3  \, (N_1)_h \cdot \p_3^m u_h \, dx - \i \p_3^{m-1}  u_3  \, \p_t (N_1)_h \cdot \p_3^m   u_h \, dx - \i \p_3^{m-1}  u_3  \, (N_1)_h \cdot \p_3^m \p_t u_h \, dx \\
 &\; -   \i \p_3^{m-1}  N_2 \, (N_1)_h \cdot \p_3^m u_h \, dx  - \i \p_3^{m-1}  N_3 \, (N_1)_h \cdot \p_3^m u_h \, dx \\
 = &\;  \f{d}{dt}  \i \p_3^{m-1}  u_3  \, (N_1)_h \cdot \p_3^m u_h \, dx - \i \p_3^{m-1}  u_3  \, \p_t (N_1)_h \cdot \p_3^m   u_h \, dx + \i \p_3(\p_3^{m-1}  u_3  \, (N_1)_h )\cdot \p_3^{m-1} \p_t u_h \, dx \\
 &\; -   \i \p_3^{m-1}  N_2 \, (N_1)_h \cdot \p_3^m u_h \, dx - \i \p_3^{m-1}  N_3 \, (N_1)_h \cdot \p_3^m u_h \, dx.
\end{align*}
Thus, using the anisotropic type inequality \eqref{ie:Sobolev}, we can check that
\begin{align} \label{p3m-246-3}
&\; -\i \p_3^{m}\vro \, (N_1)_h \cdot \p_3^m u_h \, dx - \f{d}{dt}  \i \p_3^{m-1}  u_3  \, (N_1)_h \cdot \p_3^m u_h \, dx \notag\\
\lesssim &\;   \| \p_3^{m-1} u_3\|_{H_{tan}^1}^{\f12} \| \p_3^{m} u_3\|_{H_{tan}^1}^{\f12} \| \p_3^m u_h\|_{L^2}^{\f12} \|\p_1 \p_3^m u_h\|_{L^2}^{\f12} \| \p_t (N_1)_h\|_{L^2} \notag\\
&\; + \|\p_3^{m-1} \p_t u_h\|_{L^2}( \| \p_3^{m-1} u_3\|_{H_{tan}^1}^{\f12} \| \p_3^{m} u_3\|_{H_{tan}^1}^{\f12}  \|\p_3 (N_1)_h \|_{L^2}^{\f12} \|\p_{13} (N_1)_h \|_{L^2}^{\f12}  \notag\\
&\; +\| \p_3^{m} u_3\|_{L^2}^{\f12} \| \p_3^{m+1} u_3\|_{L^2}^{\f12} \| (N_1)_h \|_{L^2}^{\f14} \|\p_1 (N_1)_h \|_{L^2}^{\f14}  \| \p_3 (N_1)_h \|_{L^2}^{\f14} \|\p_{13} (N_1)_h \|_{L^2}^{\f14}) \notag\\
&\; + \| \p_3^{m} u_h\|_{L^2}^{\f12} \| \p_2 \p_3^{m} u_h\|_{L^2}^{\f12} ( \| \p_3^{m-1} N_1\|_{L^2} \| (N_1)_h\|_{H_{tan}^1}^{\f12}  \| \p_3 (N_1)_h\|_{H_{tan}^1}^{\f12} \notag \\
&\; +\| \p_3^{m-1} N_3\|_{L^2}^{\f12}  \| \p_1 \p_3^{m-1} N_3\|_{L^2}^{\f12} \| (N_1)_h\|_{L^2}^{\f12}  \| \p_3 (N_1)_h\|_{L^2}^{\f12}) \notag\\
\lesssim &\;  \sqrt{\me(t)} \md(t),
\end{align}
here we have used
\beqq \bal
\| \p_3^{m-1} \p_t u_h\|_{L^2} = &\; \| \p_3^{m-1} (\p_1^2 u_h + \p_2^2 u_h  +\nabla_h \du - \nabla_h \vro - \nabla_h b_2+ \p_2 b_h + (F_2)_h)\|_{L^2} \lesssim \sqrt{\md(t)},\\
\| \p_t (N_1)_h \|_{L^2} \lesssim &\; (  \| \p_t \nabla_h^2 u_h \|_{L^2} + \| \nabla_h {\rm div} \p_t  u \|_{L^2} +  \| \p_t \nabla_h b_h \|_{L^2} ) \\
\lesssim &\; \| (\nabla_h^2, \nabla_h {\rm div}) (\p_1^2 u + \p_2^2 u +\nabla \du - \nabla \vro - \nabla b_2+ \p_2 b + F_2 )\|_{L^2} \\
&\;+ \| \nabla_h (\p_1^2 b_h - e_2 \du + \p_2 u + (F_3)_h )\|_{L^2} \\
\lesssim &\;  \sqrt{\md(t)}.
\dal \deqq
Substituting the estimates from \eqref{p3m-246-1} to \eqref{p3m-246-3} into \eqref{p3m-246}, we have
\beq\label{p3m-est02}
IV_2 +IV_4 +IV_6 \le  \f{d}{dt}  \i \p_3^{m-1}  u_3  \, (N_1)_h \cdot \p_3^m u_h \, dx + C \sqrt{\me(t)} \md(t).
\deq
Next, integrating by parts, we notice that
\beq\label{p3m-38-1} \bal
IV_3 + IV_8 = &\;  \i  ( \vro \, \p_3^{m} \du + \p_3^{m} u \cdot \nabla \vro ) \, \p_3^{m} \vro \, dx -\sum_{0 \le l < m} C_{m}^{l} \i  \p_3^{l} \vro \, \p_3^{m-l} \du \, \p_3^{m} \vro \, dx \\
&\;- \i   \du |\p_3^m \vro|^2 \, dx -\sum_{0 \le l < m-1 } C_{m}^{l} \i   \p_3^{m-l} u \cdot \nabla \p_3^{l}  \vro \, \p_3^{m} \vro \, dx \\
&\; -m \i \p_3 u \cdot \nabla \p_3^{m-1} \vro \, \p_3^{m} \vro \, dx   - \f12 \i  u \cdot \nabla |\p_3^{m} \vro|^2 \, dx \\
 = &\; -(\f12 +m)
 \i \du |\p_3^m \vro|^2 \, dx  + m
 \i \nabla_h \cdot u_h |\p_3^m \vro|^2 \, dx -m \i \p_3 u_h \cdot \nabla_h \p_3^{m-1} \vro \, \p_3^{m} \vro \, dx \\
 &\; -\sum_{0 < l < m} C_{m}^{l} \i  \p_3^{l} \vro \, \p_3^{m-l} \du \, \p_3^{m} \vro \, dx -\sum_{0 < l < m-1 } C_{m}^{l} \i   \p_3^{m-l} u \cdot \nabla \p_3^{l}  \vro \, \p_3^{m} \vro \, dx,
\dal \deq
where we have used $\p_3 u_3 = \du - \nabla_h \cdot u_h$.
  Substituting the equation of $\p_3 \vro$
         into the third term on the right-hand side, 
        we have
         \beq \label{p3m-38-2} \bal
        &\; - m \i \p_3 u_h \cdot \nabla_h \p_3^{m-1} \vro \, \p_3^m \vro \, dx \\
        = &\; m\f{d}{dt} \i \p_3 u_h \cdot \nabla_h \p_3^{m-1} \vro \, \p_3^{m-1} u_3 \, dx   - m\i \p_3 \p_t u_h \cdot \nabla_h \p_3^{m-1} \vro \, \p_3^{m-1} u_3 \, dx \\
        &\;-  m\i \p_3  u_h \cdot \nabla_h \p_3^{m-1} \p_t \vro \, \p_3^{m-1} u_3 \, dx   + m\i \p_3 u_h \cdot \nabla_h \p_3^{m-1} \vro \, \p_3^{m} b_2 \,dx  \\
        &\; - m\i \p_3 u_h \cdot \nabla_h \p_3^{m-1} \vro \, \p_3^{m-1} (\p_1^2 u_3 + \p_2^2 u_3 + \p_3 \du+\p_2 b_3  + (F_2)_3 )\, dx \\
        := &\; m \f{d}{dt} \i \p_3 u_h \cdot \nabla_h \p_3^{m-1} \vro \, \p_3^{m-1} u_3 \, dx   
        +\sum_{i=1}^{4} L_{i}.
         \dal \deq
         We first estimate the term $L_1$. 
        Using the estimate \eqref{Nlinear1-help-2}, we can check
         \beqq \bal
    L_1 \lesssim &\;  \|\p_3 \p_t u_h \|_{L^2}^{\f12} \|\p_1 \p_3 \p_t u_h \|_{L^2}^{\f12} \| \nabla_h \p_3^{m-1} \vro\|_{L^2} \| \p_3^{m-1} u_3 \|_{L^2}^{\f14} \| \p_2 \p_3^{m-1} u_3 \|_{L^2}^{\f14} \| \p_3^{m} u_3 \|_{L^2}^{\f14} \| \p_2 \p_3^{m} u_3 \|_{L^2}^{\f14} \\
     \lesssim &\;  \sqrt{\me(t)} \md(t).
         \dal \deqq
         Substituting the equation $\eqref{eqr}_1$ into the term $L_2$, integrating by parts, we have
         \begin{align*}
         L_2 = &\;  \i \p_3  u_h \cdot \nabla_h \p_3^{m-1} \du\, \p_3^{m-1} u_3 \, dx  + \i \p_3  u_h \cdot \nabla_h \p_3^{m-1} (u \cdot \nabla \vro + \vro \, \du  )\, \p_3^{m-1} u_3 \, dx \\
          = &\;  \i \p_3  u_h \cdot \nabla_h \p_3^{m-1} \du\, \p_3^{m-1} u_3 \, dx  - \i \p_3^{m-1} (u \cdot \nabla \vro ) \, \nabla_h \cdot (\p_3  u_h \,  \p_3^{m-1} u_3 ) \, dx \\
         &\;  - \i \p_3^{m-1} (\vro \, \du  ) \,\nabla_h \cdot( \p_3  u_h \,  \p_3^{m-1} u_3) \, dx \\
         := &\; \sum_{i=1}^{3} L_{2,i}.
         \end{align*}
         Using the anisotropic type inequality \eqref{ie:Sobolev}, we can check that
         \begin{align*}
L_{2,2} = &\; - \sum_{ 0 \le l \le m-1 }C_{m-1}^{l} \i \p_3^{l} u \cdot \p_3^{m-1-l} \nabla \vro  \, \nabla_h \cdot (\p_3  u_h \,  \p_3^{m-1} u_3 ) \, dx \\
\lesssim &\; ( \sum_{1 \le  l \le m-1} \| \p_3^{m-1-l} \nabla \vro\|_{L^2}^{\f12} \| \p_2 \p_3^{m-1-l} \nabla \vro\|_{L^2}^{\f12} \| \p_3^{l} u\|_{L^2}^{\f14} \| \p_3^{l+1} u \|_{L^2}^{\f14} \| \p_1 \p_3^{l} u \|_{L^2}^{\f14} \| \p_1 \p_3^{l+1} u \|_{L^2}^{\f14}  \\
&\; +\| \p_3^{m-1} \nabla \vro\|_{L^2} \| u\|_{L^{\infty}} ) (\| \nabla_h \p_3^{m-1} u_3 \|_{L^2}^{\f12} \| \nabla_h \p_3^{m} u_3 \|_{L^2}^{\f12} \| \p_3 u_h \|_{L^2}^{\f14} \|\nabla_h \p_3 u_h\|_{L^2}^{\f12} \|\nabla_h^2 \p_3 u_h\|_{L^2}^{\f12}\\
&\; + \| \p_3^{m-1} u_3 \|_{L^2}^{\f14} \| \p_2 \p_3^{m-1} u_3 \|_{L^2}^{\f14} \| \p_3^{m} u_3 \|_{L^2}^{\f14} \| \p_2 \p_3^{m} u_3 \|_{L^2}^{\f14} \|\nabla_h \cdot \p_3 u_h \|_{L^2}^{\f12} \|\p_1 \nabla_h \cdot \p_3 u_h\|_{L^2}^{\f12}) \\
     \lesssim &\;  \sqrt{\me(t)} \md(t).
         \end{align*}
         Similarly, we can check
         \beqq
         L_{2,1} + L_{2,3} \lesssim \sqrt{\me(t)} \md(t),
         \deqq
         which, together with above estimate, yields that
         \beqq
         L_2 \lesssim \sqrt{\me(t)} \md(t).
         \deqq
         By Lemma \ref{lemm:sobolev-ie} and using the estimate \eqref{Nlinear1-help-1}, we can obtain that
         \begin{align*}
L_3+L_4  \lesssim \sqrt{\me(t)} \md(t).
         \end{align*}
         Substituting the estimates from $L_1$ to $L_4$ into \eqref{p3m-38-2}, we obtain that 
         \beq\label{p3m-38-3}
 - m \i \p_3 u_h \cdot \nabla_h \p_3^{m-1} \vro \, \p_3^m \vro \, dx \le m \f{d}{dt} \i \p_3 u_h \cdot \nabla_h \p_3^{m-1} \vro \, \p_3^{m-1} u_3 \, dx  +  C \sqrt{\me(t)} \md(t).
         \deq
Integrating by parts, we have
\begin{align}\label{p3m-38-4}
&\; -\sum_{0 < l < m} C_{m}^{l} \i  \p_3^{l} \vro \, \p_3^{m-l} \du \cdot \p_3^{m} \vro \, dx \notag\\
= &\; \sum_{0 < l < m-1} C_{m}^{l} \i \p_3 (\p_3^{l} \vro \, \p_3^{m-l} \du) \cdot \p_3^{m-1} \vro \, dx +  \f{m}2 \i \p_3^2 \du \, |\p_3^{m-1} \vro|^2 dx\notag\\
\lesssim &\; \| \p_3^{m-1} \vro\|_{L^2}^{\f12}  \| \p_2 \p_3^{m-1} \vro\|_{L^2}^{\f12} (\| \p_3^{2} \vro\|_{L^2}^{\f12} \| \p_2 \p_3^{2} \vro\|_{L^2}^{\f12} \| \p_3^{m-1} \du\|_{L^2}^{\f12} \| \p_3^{m} \du\|_{L^2}^{\f12} \notag\\
&\;  +\| \p_3 \vro\|_{L^2}^{\f14} \| \p_{23}\vro\|_{L^2}^{\f14}  \| \p_{3}^2 \vro\|_{L^2}^{\f14} \| \p_2 \p_{3}^2 \vro\|_{L^2}^{\f14} \| \p_3^{m} \du\|_{L^2} ) \notag\\
&\; + \sum_{1 < l < m-1} \| \p_3^{m-1} \vro\|_{L^2}^{\f12}  \| \p_2 \p_3^{m-1} \vro\|_{L^2}^{\f12} (\| \p_3^{l+1} \vro\|_{L^2}^{\f12} \| \p_2 \p_3^{l+1} \vro\|_{L^2}^{\f12} \| \p_3^{m-l} \du\|_{L^2}^{\f12} \| \p_3^{m+1-l} \du\|_{L^2}^{\f12} \notag \\
&\;  +\| \p_3^{l} \vro\|_{L^2}^{\f12} \| \p_2 \p_3^{l} \vro\|_{L^2}^{\f12} \| \p_3^{m+1-l} \du\|_{L^2}^{\f12} \| \p_3^{m+2-l} \du\|_{L^2}^{\f12} ) \notag\\
&\; +  \| \p_3^{m-1} \vro\|_{L^2} \| \p_1 \p_3^{m-1} \vro\|_{L^2}^{\f12} \| \p_2 \p_3^{m-1} \vro\|_{L^2}^{\f12} \| \p_3^2 \du \|_{L^2}^{\f12} \| \p_3^3 \du \|_{L^2}^{\f12}\notag\\
\lesssim &\; \sqrt{\me(t)} \md(t).
\end{align}
Similarly, we can check that
\beq \label{p3m-38-5}
-\sum_{0 < l < m-1 } C_{m}^{l} \i   \p_3^{m-l} u \cdot \nabla \p_3^{l}  \vro \cdot \p_3^{m} \vro \, dx  \lesssim \sqrt{\me(t)} \md(t).
\deq
Thus, substituting the estimates \eqref{p3m-38-3}-\eqref{p3m-38-5} into \eqref{p3m-38-1}, we have
\beq \label{p3m-est03} \bal
IV_3 + IV_8 \le &\; m \f{d}{dt} \i \p_3 u_h \cdot \nabla_h \p_3^{m-1} \vro \, \p_3^{m-1} u_3 \, dx -(\f12 +m)
 \i \du |\p_3^m \vro|^2 \, dx   \\
 &\;  + m
 \i \nabla_h \cdot u_h |\p_3^m \vro|^2 \, dx +C\sqrt{\me(t)} \md(t).
\dal \deq
Next, we deal with the term $IV_5$. Integrating by parts, we have
\begin{align*}
IV_5 = &\;- \i (1+\vro) u \cdot  \nabla \p_3^m u \cdot \p_3^m u \,dx -\sum_{0 < l \le m} C_{m}^{l} \i (1+\vro) \p_3^{l} u \cdot \nabla \p_3^{m-1} u \cdot \p_3^m u \, dx \\
&\; + \f12 \i \p_3^m ( \vro^2) \, {\rm div}( (1+\vro) \p_3^m u )\, dx\\
= &\; \f12 \i |\p_3^m u|^2 \, ((1+\vro)\, \du + u \cdot \nabla \vro) \,dx - \sum_{0 < l \le m} C_{m}^{l} \i (1+\vro) \p_3^{l} u \cdot \nabla \p_3^{m-l} u \cdot \p_3^m u \, dx \\
&\; + \f12 \sum_{0 \le l \le m} C_{m}^{l} \i \p_3^l \vro \, \p_3^{m-l} \vro \, \p_3^m u \cdot \nabla \vro \, dx +  \f12 \sum_{0 \le l \le m} C_{m}^{l} \i (1+\vro) \p_3^l \vro \, \p_3^{m-l} \vro \, \p_3^m \du\, dx \\
:= &\; \sum_{i=1}^{4} IV_{5,i}.
\end{align*}
Using the anisotropic type inequality \eqref{ie:Sobolev}, we can check that
\begin{align*}
 IV_{5,4} \le &\;  \i \vro \, \p_3^m \vro \, \p_3^m \du \, dx + C \| \p_3^m \vro\|_{L^2} \| \p_3^m \du\|_{L^2} \| (\vro, \p_3 \vro)\|_{L^2}^{\f12}  \| \nabla_h (\vro, \p_3 \vro)\|_{H_{tan}^1}^{\f12} \\
 &\; +C \| \p_3^m \du\|_{L^2} \| \p_3^{m-1} \vro\|_{L^2}^{\f12} \| \p_1 \p_3^{m-1} \vro\|_{L^2}^{\f12} \| \p_3 \vro\|_{L^2}^{\f14} \| \p_3^2 \vro\|_{L^2}^{\f14} \| \p_{23} \vro\|_{L^2}^{\f14} \| \p_2 \p_3^2 \vro\|_{L^2}^{\f14}   \\
 &\; + C \sum_{2 \le l \le m-2} \| \p_3^m \du\|_{L^2} \| \p_3^{l} \vro\|_{L^2}^{\f12} \| \p_1 \p_3^{l} \vro\|_{L^2}^{\f12} \| \p_3^{m-l} \vro\|_{L^2}^{\f14} \| \p_3^{m-l+1} \vro\|_{L^2}^{\f14} \| \p_{2}  \p_3^{m-l}\vro\|_{L^2}^{\f14} \| \p_2  \p_3^{m-l+1} \vro\|_{L^2}^{\f14}   \\
 \le &\; \i \vro \, \p_3^m \vro \, \p_3^m \du \, dx + C\sqrt{\me(t)} \md(t).
\end{align*}
We now estimate the first term on the right-hand side.
Substituting the equation of $\du$ into this term, we have
         \begin{align} \label{p3m-Nlinear} 
         &\; \i \vro \, \p_3^{m} \vro \, \p_3^m \du \, dx 
        =  \i \vro \, \p_3^{m} \vro \, \p_3^{m} (- \p_t \vro - u\cdot \nabla \vro - \vro \, \du) \, dx \notag\\
        = &\; - \f12 \f{d}{dt} \i \vro \,|\p_3^m \vro|^2 \, dx + \f12 \i \p_t \vro \,|\p_3^m \vro|^2 \, dx - \f12 \i \vro \,   u \cdot \nabla |\p_3^{m}  \vro|^2 \, dx \notag\\
        &\; - \sum_{ 1\le l\le m} C_m^l \i \vro \,  \p_3^{m} \vro \, \p_3^{l}  u\cdot \nabla \p_3^{m-l} \vro \, dx - \sum_{ 0\le l\le m} C_m^l \i \vro \,  \p_3^{m} \vro \, \p_3^{l}\, \vro \, \p_3^{m-l} \du \, dx \notag\\
         = &\; -\f12 \f{d}{dt}  \i \vro \,|\p_3^m \vro|^2 \, dx-\f12 \i \du \,|\p_3^m \vro|^2 \, dx + m \i \vro  \, \nabla_h \cdot u_h \,  |\p_3^{m} \vro|^2 \, dx \notag\\
         &\; -\sum_{2 \le l\le m} C_m^l \i \vro \,  \p_3^{m} \vro \, \p_3^{l}  u_3 \cdot  \p_3^{m-l+1} \vro \, dx -\sum_{1 \le l\le m} C_m^l \i \vro \,  \p_3^{m} \vro \, \p_3^{l}  u_h \cdot \nabla_h \p_3^{m-l} \vro \, dx\notag\\
        &\;  - \sum_{ 0\le l\le m-1} C_m^l \i \vro \,  \p_3^{m} \vro \, \p_3^{l}\, \vro \, \p_3^{m-l} \du \, dx - (m+1) \i \vro \, \du \,|\p_3^m \vro|^2 \, dx \notag\\
        = &\;-\f12 \f{d}{dt}  \i \vro \,|\p_3^m \vro|^2 \, dx -\f12 \i \du \,|\p_3^m \vro|^2 \, dx + \sum_{i=1}^{5} IV_{5,4,i},
        \end{align}
        here we have used
        \beqq \bal
        \f12 \i \p_t \vro \,|\p_3^m \vro|^2 \, dx - \f12 \i \vro \,   u \cdot \nabla |\p_3^{m}  \vro|^2 \, dx 
        = &\;   \f12 \i \p_t \vro \,|\p_3^m \vro|^2 \, dx + \f12 \i {\rm div} (\vro \,   u) \cdot \nabla |\p_3^{m}  \vro|^2 \, dx\\
        = &\; -\f12 \i \du \,|\p_3^m \vro|^2 \, dx,\\
        -m \i \vro\, \p_3 u_3 \, |\p_3^m \vro|^2 \, dx = &\;  - m \i \vro\,\du \,|\p_3^m \vro|^2 \, dx + m \i \vro  \, \nabla_h \cdot u_h \,  |\p_3^{m} \vro|^2 \, dx. 
        \dal \deqq
        Using the anisotropic type inequality \eqref{ie:Sobolev}, we can easily check that
\begin{align*}
IV_{5,4,2}+IV_{5,4,3}+IV_{5,4,4}
\lesssim \sqrt{\me(t)} \md(t).
\end{align*}
Similar to the estimate of $I_4$ and $II_4$, we can immediately obtain that
\beqq \bal
IV_{5,4,1} \le &\; - m \f{d}{dt} \i \vro \, \nabla_h \cdot u_h \, \p_3^{m} \vro \, \p_3^{m-1} u_3 \, dx  + C \sqrt{\me(t)} \md(t),\\
IV_{5,4,5} \le  &\; \f{m+1}{2}  \f{d}{dt} \i \vro^2 \, |\p_3^{m} \vro|^2 \, dx + C \sqrt{\me(t)} \md(t).
\dal \deqq
Substituting the estimates from $IV_{5,4,1}$ to  $IV_{5,4,5}$ into the equality \eqref{p3m-Nlinear}, we have
\beqq \bal
  \i \vro \, \p_3^{m} \vro \, \p_3^m \du \, dx 
 \le &\; - m \f{d}{dt} \i \vro \, \nabla_h \cdot u_h \, \p_3^{m} \vro \, \p_3^{m-1} u_3 \, dx -\f12 \f{d}{dt}  \i \vro \,|\p_3^m \vro|^2 \, dx   \\
 &\;+\f{m+1}{2} \f{d}{dt}  \i \vro^2 \,|\p_3^m \vro|^2 \, dx -\f12 \i \du \,|\p_3^m \vro|^2 \, dx+C\sqrt{\me(t)} \md(t).
\dal \deqq 
Thus, we have
\beqq \bal
IV_{5,4} \le &\; - m \f{d}{dt} \i \vro \, \nabla_h \cdot u_h \, \p_3^{m} \vro \, \p_3^{m-1} u_3 \, dx -\f12 \f{d}{dt}  \i \vro \,|\p_3^m \vro|^2 \, dx \\
&\; +\f{m+1}{2} \f{d}{dt}  \i \vro^2 \,|\p_3^m \vro|^2 \, dx-\f12 \i \du \,|\p_3^m \vro|^2 \, dx +C\sqrt{\me(t)} \md(t).
\dal \deqq
Similarly, we can obtain
\beqq
IV_{5,1} + IV_{5,2} + IV_{5,3} \lesssim \sqrt{\me(t)} \md(t).
\deqq
Thus, we have
\beq\label{p3m-est04} \bal
IV_{5} \le &\; - m \f{d}{dt} \i \vro \, \nabla_h \cdot u_h \, \p_3^{m} \vro \, \p_3^{m-1} u_3 \, dx -\f12 \f{d}{dt}  \i \vro \,|\p_3^m \vro|^2 \, dx  \\
&\; +\f{m+1}{2} \f{d}{dt}  \i \vro^2 \,|\p_3^m \vro|^2 \, dx-\f12 \i \du \,|\p_3^m \vro|^2 \, dx +C\sqrt{\me(t)} \md(t).
\dal \deq
Then we deal with the term $IV_7$. Integrating by parts, we have
\begin{align*}
IV_7 = &\;  \i b \cdot \nabla \p_3^m b \cdot \p_3^m u \, dx + \sum_{0 < l \le m} C_{m}^{l} \i \p_3^{l} b \cdot \nabla \p_3^{m-l} b \cdot \p_3^m u \, dx  \\
&\; + 
\sum_{0 < l \le m} C_{m}^{l} \i (1+\vro)\p_3^{l} (\f{1}{1+\vro}) \cdot \p_3^{m-l} (b \cdot \nabla b) \cdot \p_3^m u \, dx +  \f12\sum_{0 \le l \le m} C_{m}^{l} \i \p_3^{l} b \cdot  \p_3^{m-l} b \cdot \p_3^m \du \, dx \\
&\; -\f12 \sum_{0 < l \le m} C_{m}^{l} \i (1+\vro)\p_3^{l} (\f{1}{1+\vro}) \cdot \nabla \p_3^{m-l} (|b|^2) \cdot \p_3^m u \, dx \\
:= &\;  \i b \cdot \nabla \p_3^m b \cdot \p_3^m u \, dx + \sum_{i=1}^{4} IV_{7,i}.
\end{align*}
We only give the estimate of $IV_{7,1}$, and the terms from  $IV_{7,2}$ to $IV_{7,4}$ can be estimated similarly.  Using the anisotropic type inequality \eqref{ie:Sobolev}, we have
\begin{align*}
IV_{7,1}
\lesssim &\; \| \p_3^{m} u\|_{L^2}^{\f12} \| \p_2 \p_3^{m} u\|_{L^2}^{\f12} (\| \p_3 b_h\|_{L^2}^{\f14}  \| \p_1 \p_3 b_h\|_{L^2}^{\f14}  \| \p_3^2 b_h\|_{L^2}^{\f14} \| \p_1 \p_3^2 b_h\|_{L^2}^{\f14} \|\nabla_h \p_3^{m-1} b\|_{L^2}  \\
&\; + \sum_{2\le l \le m} \| \p_3^{l} b_h\|_{L^2}^{\f12}  \| \p_1 \p_3^{l} b_h\|_{L^2}^{\f12}  \|\nabla_h \p_3^{m-l} b \|_{L^2}^{\f12} \|\nabla_h  \p_3^{m-l+1} b \|_{L^2}^{\f12})\\
&\; + \| \p_3^{m} u\|_{L^2}^{\f12} \| \p_2 \p_3^{m} u\|_{L^2}^{\f12} (\sum_{1\le l \le m-1} \| \p_3^{l} b_3\|_{L^2}^{\f12} \| \p_3^{l+1} b_3\|_{L^2}^{\f12}  \| \p_3^{m-l+1} b\|_{L^2}^{\f12} \| \p_1 \p_3^{m-l+1} b\|_{L^2}^{\f12} \\
&\; + \| \p_3^m b_3\|_{L^2}\| \p_3 b\|_{L^2}^{\f14} \| \p_3^2 b\|_{L^2}^{\f14}  \| \p_1 \p_3 b\|_{L^2}^{\f14} \| \p_1 \p_3^{2} b\|_{L^2}^{\f14}  )\\
\lesssim &\; \sqrt{\me(t)} \md(t),
\end{align*}
and 
\beqq
IV_{7,2} + IV_{7,3} + IV_{7,4} \lesssim \sqrt{\me(t)} \md(t).
\deqq
Thus, we can check that
\beq \label{p3m-est05}
IV_7  \le \i b \cdot \nabla \p_3^m b \cdot \p_3^m u \, dx + C \sqrt{\me(t)} \md(t).
\deq
It remains to estimate the term $IV_9$.  We separate it as follows:   
\begin{align}\label{p3m-9}
IV_9 = &\;- \f12 \i  u \cdot \nabla |\p_3^{m} b|^2 \, dx -  m\i \p_3 u \cdot \nabla \p_3^{m-1} b \cdot \p_3^m b \, dx - \sum_{1 < l \le m} C_{m}^{l} \i \p_3^{l} u \cdot \nabla \p_3^{m-l} b \cdot \p_3^m b \, dx \notag\\
&\; + \i  b \cdot \nabla \p_3^{m} u \cdot \p_3^m b \, dx + \sum_{0 < l < m} C_{m}^{l} \i \p_3^{l} b \cdot \nabla \p_3^{m-l} u \cdot \p_3^m b \, dx  + \i \p_3^{m} b \cdot \nabla u \cdot \p_3^m b \, dx \notag\\
 &\; - \sum_{0 \le l < m} C_{m}^{l} \i \p_3^{l} b \cdot \p_3^{m-l} \du \cdot \p_3^m b \, dx -\i \du \,|\p_3^m b|^2 \,   dx \notag\\
 = &\; \sum_{i=1}^{8} IV_{9,i}.
\end{align}
First of all, due to the condition $ {\rm div} b=0$, integrating by parts, we have
\beqq
IV_{9,4} + \i b \cdot \nabla \p_3^m b \cdot \p_3^m u \, dx = \i b \cdot \nabla (\p_3^m b \cdot \p_3^m u)\, dx =0,
\deqq
and 
\beqq \bal
IV_{9,1} + IV_{9,8}= &\; - \f12\i \du \,  | \p_3^m b_3 |^2 \, dx  -\f12 \i  \du \, | \p_3^m b_h |^2 \, dx \\
\le &\; C \| \p_3^m b_3\|_{L^2}^2 \| \du \|_{L^\infty} -\f12 \i   \du \, | \p_3^m b_h |^2 \,dx\\
\le  &\;- \f12  \i   \du \, | \p_3^m b_h |^2 \,dx +C \sqrt{\me(t)} \md(t)  .
\dal \deqq
Next, integrating by parts, we can check that
\begin{align*}
IV_{9,2}
= &\; -m \i  (\du -\p_1 u_1 -\p_2 u_2)  \,|\p_3^m b_h|^2 \,  dx - m \i \p_3 u_3  \,|\p_3^m b_3|^2 \,  dx\\
&- m \i  \p_3 u_h \cdot \nabla_h \p_3^{m-1} b \cdot \p_3^m b \,dx \\
= &\;  - m \i  (\du -\p_2 u_2)  \, |\p_3^m b_h|^2 \,dx- m \i  u_1 \, \p_1 |\p_3^m b_h|^2  \,dx\\
&\; - m \i \p_3 u_3  \, |\p_3^m b_3|^2 \,  dx 
 - m \i  \p_3 u_h \cdot \nabla_h \p_3^{m-1} b \cdot \p_3^m b \,dx \\
\le  &\;   -m \i (\du -\p_2 u_2)   \, |\p_3^m b_h|^2  \, dx +C \sqrt{\me(t)} \md(t), 
\end{align*}
and integrating by parts, we have
\begin{align*}
 IV_{9,6} = &\;  \i \p_3^m b_1 \, \p_1 u   \cdot \p_3^m b \,dx +   \i \p_3^m b_2 \, \p_2 u  \cdot \p_3^m b \,dx +   \i \p_3^m b_3 \, \p_3 u   \cdot \p_3^m b \,dx \\
= &\;  -\i  u \cdot \p_1(\p_3^m b_1   \, \p_3^m b )\,dx +   \i \p_3^m b_2 \, \p_2 u_1   \, \p_3^m b_1 \,dx  +   \i \p_3^m b_2 \, \p_2 u_2   \, \p_3^m b_2 \,dx  \\
 &\; +   \i \p_3^m b_2 \, \p_2 u_3   \, \p_3^m b_3 \,dx +   \i \p_3^m b_3 \, \p_3 u   \cdot \p_3^m b \,dx \\
 \le &\; C \| u\|_{L^2}^{\f14} \| \p_2 u\|_{L^2}^{\f14}\| \p_3 u\|_{L^2}^{\f14} \| \p_{23} u\|_{L^2}^{\f14} \| \p_1 \p_3^m b\|_{L^2} \| \p_3^m b\|_{L^2}^{\f12} \| \p_1 \p_3^m b\|_{L^2}^{\f12} +   \i \p_3^m b_2 \, \p_2 u_1   \, \p_3^m b_1 \,dx   \\
 &\; +   \i \p_2 u_2 \, |\p_3^m b_2|^2 \, dx + C \| \p_3^m b_3\|_{L^2}  \| \p_3^m b\|_{L^2}^{\f12}  \| \p_1 \p_3^m b\|_{L^2}^{\f12} \|\p_2 u\|_{H_{tan}^1}^{\f12} \|\p_3 \p_2 u\|_{H_{tan}^1}^{\f12} \\
&\;  + C \| \p_3^m b_3\|_{L^2}  \| \p_3^m b\|_{L^2}^{\f12}  \| \p_1 \p_3^m b\|_{L^2}^{\f12} \|\p_3 u\|_{L^2}^{\f14}  \|\p_3^2 u\|_{L^2}^{\f14} \|\p_{23} u\|_{L^2}^{\f14} \|\p_2 \p_3^2 u\|_{L^2}^{\f14} \\
\le  &\;   \i \p_3^m b_2 \, \p_2 u_1   \, \p_3^m b_1 \,dx  +   \i  \p_2 u_2 \, |\p_3^m b_2|^2 \,dx +C \sqrt{\me(t)} \md(t).
\end{align*}
Similar to the estimate of $IV_{7,1}$, we have
\beqq
IV_{9,3} + IV_{9,5} +IV_{9,7} \lesssim    \sqrt{\me(t)} \md(t).
\deqq
Substituting the estimates from $IV_{9,1}$ to $IV_{9,8}$ into \eqref{p3m-9} and combining the estimate \eqref{p3m-est05} of $IV_7$, we have
\beq\label{p3m-est06} \bal
IV_{7}+IV_{9} 
\le &\;   -(m+\f12) \i \du \, | \p_3^m b_h |^2 \,  dx + m \i \p_2 u_2 \, |\p_3^m b_h|^2  \, dx\\
&\;+   \i \p_2 u_2 \, |\p_3^m b_2|^2 \, dx + \i \p_3^m b_2 \, \p_2 u_1   \, \p_3^m b_1 \,dx  +C \sqrt{\me(t)} \md(t).
\dal \deq
Substituting the estimates \eqref{p3m-est01}, \eqref{p3m-est02}, \eqref{p3m-est03}, \eqref{p3m-est04} and \eqref{p3m-est06} into the equality \eqref{p3m-equ}, we have
\begin{align*}
&\;\f{d}{dt} \f12 \i (  |\p_3^m \vro|^2 + (1+\vro) |\p_3^m u|^2 +  |\p_3^m b|^2 ) \, dx - \f{d}{dt}  \i \p_3^{m-1}  u_3  \, (N_1)_h \cdot \p_3^m u_h \, dx  \\
&\; + m \f{d}{dt} \i \vro \, \nabla_h \cdot u_h \, \p_3^{m} \vro \, \p_3^{m-1} u_3 \, dx+\f12 \f{d}{dt}  \i \vro \,|\p_3^m \vro|^2 \, dx  -\f{m+1}{2} \f{d}{dt}  \i \vro^2 \,|\p_3^m \vro|^2 \, dx \\
&\;-m \f{d}{dt} \i \p_3 u_h \cdot \nabla_h \p_3^{m-1} \vro \, \p_3^{m-1} u_3 \, dx+\|\p_3^m( \nabla_h  u, \p_1 b) \|_{L^2}^2  \\
\le &\;  -(m+1)
 \i \du\, |\p_3^m \vro|^2 \, dx  + m
 \i \nabla_h \cdot u_h |\p_3^m \vro|^2 \, dx   - (m+\f12) \i   \du \, | \p_3^m b_h |^2 \,dx \\
 &\;+ m \i \p_2 u_2 \, |\p_3^m b_1|^2  \, dx+  (m+1) \i \p_2 u_2 \,|\p_3^m b_2|^2 \, dx+ \i \p_3^m b_2 \, \p_2 u_1   \, \p_3^m b_1 \,dx \\
 &\;+ C \sqrt{\me(t)} \md(t),
\end{align*}
where $N_1$ is defined in \eqref{def-N1}.
Combining the estimates in Lemmas \ref{lemm:Nlinear1}-\ref{lemm:Nlinear3}, we finish the proof of this lemma.
        \end{proof}

 \subsection{Horizontal derivative estimates}\label{sec-tan-vertical}
        In this subsection, we will estimate the horizontal derivative of density, velocity and magnetic field.
        \begin{lemm}\label{lemma33}
			Under the assumption \eqref{assumption}, for any smooth solution $(\vro ,u, b)$ of equation \eqref{eqr},
			it holds 
			\beq\label{3101} \bal
			&\; \frac{d}{dt}\Big\{\| (\vro, \sqrt{1+\vro}\, u, b)\|_{L^2}^2 + \sum\limits_{|\alpha_h| = m}\| (\p^{\ah}\vro, \sqrt{1+\vro}\, \p^{\ah} u, \p^{\al_h} b)\|_{L^2}^2 \Big\} 
			\\
			&\;+  \|(\nabla_h u, \du,\p_1  b)\|_{H_{tan}^m}^2 
			\lesssim  \sqrt{\me(t)}\md(t).
			\dal \deq
		\end{lemm}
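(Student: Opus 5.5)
We apply $\p^{\ah}$ with $|\ah|=0$ or $|\ah|=m$ to \eqref{eqr}. The momentum equation is multiplied by $1+\vro$ before being tested against $\p^{\ah}u$, exactly as in the proof of Lemma \ref{lemma32}. The density and magnetic equations are tested against $\p^{\ah}\vro$ and $\p^{\ah}b$. This produces the identity
\begin{align*}
&\f12\f{d}{dt}\i\big(|\p^{\ah}\vro|^2+(1+\vro)|\p^{\ah}u|^2+|\p^{\ah}b|^2\big)dx+\|\p^{\ah}(\nabla_h u,\du,\p_1 b)\|_{L^2}^2\\
&\quad=\f12\i|\p^{\ah}u|^2\p_t\vro\,dx+\i\vro\,\p^{\ah}(\p_1^2u+\p_2^2u+\nabla\du)\cdot\p^{\ah}u\,dx+\cdots .
\end{align*}
The linear couplings cancel by the same identities as before:
\[
\i\p^{\ah}\nabla\vro\cdot\p^{\ah}u\,dx+\i\p^{\ah}\du\,\p^{\ah}\vro\,dx=0,
\]
together with the analogous identity for $\nabla b_2-\p_2 b$ against $e_2\du-\p_2u$. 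The remainder consists of the commutator terms and the nonlinearities coming from $(1+\vro)F_2$, $F_1$ and $F_3$. For $|\ah|=0$, the terms are trilinear with at most one derivative, and the bound by $\sqrt{\me}\md$ is routine via \eqref{ie:Sobolev}.

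The key advantage over Section 3.1 is that every top-order factor now carries at least one horizontal derivative. Each nonlinear term will therefore be distributed by Lemma \ref{lemm:sobolev-ie} so that each factor has a horizontal derivative available to land in $\md$. Concretely, we write $\p^{\ah}=\p_h\p^{\beta_h}$ with $|\beta_h|=m-1$. Then top-order density terms such as $\p^{\ah}\vro$ are bounded by $\|\nabla_h\vro\|_{H^{m-1}}\le\sqrt{\md}$. Top-order magnetic terms $\p^{\ah}b$ are split into the part containing $\p_1$, controlled by the $H^m$ dissipation of $\p_1 b$, and the pure $\p_2^m b$ part, controlled by $\|\p_2 b\|_{H^{m-1}}$. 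Velocity terms are always dissipative after one horizontal derivative.

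The terms with the wrong derivative count are handled by integration by parts. The viscous term $\vro\,\p^{\ah}(\p_1^2u+\cdots)\cdot\p^{\ah}u$ becomes, after moving one horizontal derivative, $\vro|\nabla_h\p^{\ah}u|^2$ plus $\nabla_h\vro$ terms, which is bounded by $\|\vro\|_{L^\infty}\md$. The transport terms $u\cdot\nabla\p^{\ah}\vro\,\p^{\ah}\vro$ and $u\cdot\nabla\p^{\ah}b\cdot\p^{\ah}b$ are integrated by parts to give $\du|\p^{\ah}\vro|^2$. This term is now harmless, since $\|\p^{\ah}\vro\|_{L^2}^2\lesssim\md$ and $\|\du\|_{L^\infty}\lesssim\sqrt{\me}$. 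The pressure and density terms $\vro\,\p^{\ah}\du\,\p^{\ah}\vro$ are likewise directly bounded by $\sqrt{\me}\md$. This is why no corrector functionals are needed here.

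The main obstacle is the Lorentz-force pair
\[
\i\f{1}{1+\vro}\,b\cdot\nabla\p^{\ah}b\cdot\p^{\ah}u\,(1+\vro)\,dx\quad\text{and}\quad\i b\cdot\nabla\p^{\ah}u\cdot\p^{\ah}b\,dx.
\]
The first comes from $F_2$ weighted by $1+\vro$, and the second from $F_3$. Each involves $m+1$ derivatives, including a possible $\p_3$ that is not dissipated. These are handled by the cancellation of Step 2: the weight $1+\vro$ exactly removes the factor $\f{1}{1+\vro}$, and ${\rm div}\,b=0$ gives
\[
\i b\cdot\nabla(\p^{\ah}b\cdot\p^{\ah}u)\,dx=0.
\]
The remaining commutators $\p^{\ah}(\f{1}{1+\vro}b\cdot\nabla b)-\f{1}{1+\vro}b\cdot\nabla\p^{\ah}b$ are lower order in $b$ and carry horizontal derivatives, so they are bounded by $\sqrt{\me}\md$. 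Summing over $|\ah|\in\{0,m\}$ gives \eqref{3101}. The intermediate horizontal orders are recovered by interpolation, which accounts for the $H^m_{tan}$ dissipation norm on the left.
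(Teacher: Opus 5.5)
\textbf{Verdict: there is a genuine gap, though it is easy to repair.} Your overall architecture matches the paper's: the $1+\vro$ weight, the linear cancellations, the anisotropic inequalities, the $\mathrm{div}\,b=0$ cancellation of the Lorentz pair, separate treatment of $\ah=0$ and $|\ah|=m$, and interpolation for the intermediate orders. However, you miss one exact cancellation the paper relies on, and without it your claim that the case $\ah=0$ is ``routine'' is false.

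\textbf{The missing cancellation.} After multiplying by $1+\vro$, the $\vro$-weighted linear part $\i\vro\,\p^{\ah}N_1\cdot\p^{\ah}u\,dx$, with $N_1$ as in \eqref{def-N1}, is cancelled exactly by the quasilinear part of $\i(1+\vro)\p^{\ah}F_2\cdot\p^{\ah}u\,dx$. What survives is
\[
V_2+V_4+V_6=-\sum_{0<\bh\le\ah}C_{\ah}^{\bh}\i(1+\vro)\,\p^{\bh}\Big(\f{\vro}{1+\vro}\Big)\,\p^{\ah-\bh}N_1\cdot\p^{\ah}u\,dx .
\]
This vanishes for $\ah=0$ and is a pure commutator for $|\ah|=m$. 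You instead keep $\i\vro\,\p^{\ah}(\p_1^2u+\p_2^2u+\nabla\du)\cdot\p^{\ah}u\,dx$ as a separate term and bound it, and implicitly its partners, one at a time. At $|\ah|=m$ this still works. Every factor there is dissipative, after integrating by parts in $x_3$ or $x_2$ when $\ah$ corresponds to $\p_2^m$. For the $\nabla\du$ component you must integrate by parts through the divergence rather than a horizontal derivative.

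\textbf{Where it fails: $\ah=0$.} The integral $\i\vro\,\p_3 b_2\,u_3\,dx$ appears with opposite signs in $-\i\vro(\nabla b_2-\p_2 b)\cdot u\,dx$ and in $\i(1+\vro)F_2\cdot u\,dx$. On its own it is not bounded by $\sqrt{\me}\md$. None of $\vro$, $\p_3 b_2$, $u_3$ is controlled in $L^2$ by $\md$. Integrating by parts in $x_3$ leaves $\i\p_3\vro\,b_2\,u_3\,dx$, which has the same defect.

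A concrete family shows this:
\begin{itemize}
\item $u_h=0$ and $b_1=0$;
\item $\vro=\epsilon R(x_h/L,x_3)$ and $b_2=\epsilon\,\p_3\Phi(x_h/L,x_3)$;
\item $b_3=-\epsilon L^{-1}\p_2\Phi(x_h/L,x_3)$, so that $\mathrm{div}\,b=0$;
\item $u_3=\epsilon L^{-1/2}U(x/L)$.
\end{itemize}
Then $\md\sim\epsilon^2$ and $\me\sim\epsilon^2L^2$, while the integral is $\sim\epsilon^3L^{3/2}$ for generic $R,\Phi,U$. Its ratio to $\sqrt{\me}\md$ therefore grows like $L^{1/2}$.

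\textbf{The fix.} Record the cancellation above before estimating anything. Equivalently, test the unnormalized momentum equation
\[
(1+\vro)(\p_t u+u\cdot\nabla u)-N_1+(1+\vro)^2\nabla\vro=b\cdot\nabla b-\f12\nabla|b|^2 .
\]
Then no $\vro$-weighted copy of $N_1$ ever appears at $\ah=0$. The remaining zeroth-order terms are bounded by $\sqrt{\me}\md$ exactly as in the paper, using the Lorentz-pair cancellation you already have.
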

        \begin{proof}
            Similar to the equality \eqref{p3m-equ}, we have
            \beq\label{ph-equ}
			\begin{aligned}
				&\f{d}{dt} \f12 \i (  |\p^{\al_h} \vro|^2 + (1+\vro) |\p^{\al_h} u|^2 +  |\p^{\al_h} b|^2 ) \, dx  +\|\p^{\al_h}( \nabla_h  u, \p_1 b) \|_{L^2}^2 \\
				=& -\f{1}{2}\i  |\p^{\al_h} u|^2  (\du +u \cdot \nabla \vro + \vro \, \du)\, dx+\i \vro\, (\p_{1}^2 \p^{\ah}  u+\p_{2}^2 \p^{\ah}  u+\nabla \p^{\ah}  {\rm div}u)\cdot \p^{\ah} u \, dx\\
				&\;
				-\i \vro \, \p^{\ah} \nabla \vro \cdot \p^{\ah} u\, dx -\i \vro \, \p^{\ah} (\nabla b_2-\p_2 b) \cdot \p^{\ah} u\, dx - \i (1+\vro) \p^{\ah} ( u \cdot \nabla u+\vro \nabla \vro) \cdot \p^{\ah} u\, dx\\
                &\; - \i (1+\vro) \p^{\ah}\Big\{\frac{\vro}{1+\vro}
				(\p_{1}^2 u+\p_{2}^2 u+\nabla {\rm div}u -\nabla b_2+\p_2 b)\Big\} \cdot \p^{\ah} u\, dx  \\ 
                &\;  + \i (1+\vro) \p^{\ah}  \Big\{\frac{1}{1+\vro}b\cdot \nabla b
				-\frac{1}{2(1+\vro)}\nabla |b|^2\Big\} \cdot \p^{\ah} u\, dx   \\
                &\;   - \i  \p^{\ah} (u \cdot \nabla \vro + \vro \, \du) \, \p^{\ah} \vro \, dx + \i  \p^{\ah} (- u \cdot \nabla b + b \cdot \nabla u- b \, \du ) \cdot \p^{\ah} b \, dx\\
                := &\; \sum_{i=1}^{9} V_{i}.
			\end{aligned}
			\deq
           Using the anisotropic type inequalities \eqref{ie:Sobolev}, it is easy to check that
            \beqq \bal
            V_1 \lesssim &\; \| \p^{\al_h} u \|_{L^2} \| \p_1 \p^{\al_h} u \|_{L^2}^{\f12}  \| \p_2 \p^{\al_h} u \|_{L^2}^{\f12} ( \| \du\|_{L^2}^{\f12} \| \p_3 \du\|_{L^2}^{\f12}(1+\|\vro\|_{L^{\infty}}) + \|u\|_{L^{\infty}} \| \nabla \vro\|_{L^2}^{\f12}\| \p_3 \nabla \vro\|_{L^2}^{\f12}) \\
            \lesssim &\; \sqrt{\me(t)} \md(t).
            \dal \deqq
            Now we estimate the other terms.
            	\textbf{First of all, we deal with the case $\alpha_h = 0$. }  
                It is easy to check that
                \beqq \bal
            &\;  V_2 + V_4 + V_6 =0.
               \dal \deqq
               Integrating by parts and using the anisotropic type inequalities \eqref{ie:Sobolev}, we have
                \begin{align*}
                V_3 + V_8 = &\; \i  \vro  \, (\vro \, \du +  u \cdot \nabla \vro ) \, dx  -\i  \vro  \, (\vro \, \du +  u \cdot \nabla \vro ) \, dx  =0,\\
                V_5 =&\; - \f12 \i (1+\vro) u \cdot \nabla |u|^2 \, dx  - \f12 \i (1+\vro) \nabla (\vro^2) \cdot u \, dx\\
                =&\; \f12 \i (|u|^2 + \vro^2)((1+\vro) \du + u \cdot \nabla \vro) \, dx\\
                \lesssim &\; \| \du\|_{L^2}^{\f12} \| \p_3 \du\|_{L^2}^{\f12}  \|(\vro, u)\|_{L^2} \|\p_1(\vro, u)\|_{L^2}^{\f12}  \|\p_2 (\vro, u)\|_{L^2}^{\f12}  \\
                &\; + \| \nabla \vro\|_{L^2}^{\f12} \| \p_1  \nabla \vro\|_{L^2}^{\f12} \| u\|_{L^2}^{\f14}  \| \p_3 u\|_{L^2}^{\f14}  \| \p_2 u\|_{L^2}^{\f14}  \| \p_{23} u\|_{L^2}^{\f14} \\
                &\; \times\|(\vro, u)\|_{L^2}^{\f12} \| \p_1 (\vro, u)\|_{L^2}^{\f12} \|(\vro, u)\|_{L^2}^{\f14} \|\p_2 (\vro, u)\|_{L^2}^{\f14} \|\p_3 (\vro, u)\|_{L^2}^{\f14}\|\p_{23} (\vro, u)\|_{L^2}^{\f14}\\
                \lesssim &\; \sqrt{\me(t)} \md(t),
                \end{align*}
                and due to the condition ${\rm div} b=0$, we have
                \beqq \bal
                V_7 + V_9 = &\; \i (b \cdot \nabla b \cdot u+  b \cdot \nabla u \cdot b ) \, dx - \f12 \i u \cdot \nabla |b|^2 \, dx - \i |b|^2 \, \du \, dx \\
                = &\;- \i (b \cdot u) \, {\rm div} b \, dx - \f12 \i |b|^2 \, \du \, dx \\
                \lesssim &\; \| \du\|_{L^2}^{\f12} \| \p_3 \du\|_{L^2}^{\f12}  \|b\|_{L^2} \|\p_1 b\|_{L^2}^{\f12}  \|\p_2 b\|_{L^2}^{\f12} \\
                \lesssim  &\;  \sqrt{\me(t)} \md(t).
                \dal \deqq
                Combining the estimates from $V_1$ to $V_9$, we obtain that
                \beq \label{3102}
\f{d}{dt} \f12 \i (  |\vro|^2 + (1+\vro) |u|^2 +  |b|^2 ) \, dx  +\|( \nabla_h  u, \p_1 b) \|_{L^2}^2  \lesssim   \sqrt{\me(t)} \md(t).
                \deq
                \textbf{Next, we deal with the case $|\alpha_h |= m$. } 
                Obviously, it holds
                \beqq \bal
                V_2 + V_4 +V_6 
                =&\; \sum_{0 < \beta_h \le \ah} C_{\ah}^{\beta_h} \i (1+\vro) \p^{\bh} (\frac{\vro}{1+\vro})
				\p^{\ah-\bh}N_1 \cdot \p^{\ah} u \, dx,
                \dal \deqq
                which we recall that
                $N_1 = \p_{1}^2 u+\p_{2}^2 u+\nabla {\rm div}u -\nabla b_2+\p_2 b$.
                If $\bh=\ah$, under the assumption \eqref{assumption},  the anisotropic type inequality $\eqref{ie:Sobolev}_2$ yields that
                \beqq \bal
&\; \i (1+\vro) \p^{\ah} (\frac{\vro}{1+\vro})
				 N_1 \cdot \p^{\ah} u \, dx \\
                \lesssim &\; (1+\|\vro\|_{L^{\infty}}) \|\p^{\ah} (\frac{\vro}{1+\vro}) \|_{L^2} \| N_1\|_{L^2}^{\f14} \| \p_1 N_1\|_{L^2}^{\f14}\| \p_2 N_1\|_{L^2}^{\f14}\|\p_{12} N_1\|_{L^2}^{\f14} \| \p^{\ah} u \|_{L^2}^{\f12} \|\p_2 \p^{\ah} u \|_{L^2}^{\f12} \\
                \lesssim  &\;  \sqrt{\me(t)} \md(t).
                \dal \deqq
                If $|\bh|=1$, we have
                \beqq
              \i (1+\vro) \p^{\bh} (\frac{\vro}{1+\vro})
				\p^{\ah-\bh}N_1 \cdot \p^{\ah} u \, dx
                \lesssim \| \p^{\bh} \vro\|_{L^{\infty}} \| \p^{\ah-\bh}N_1\|_{L^2} \| \p^{\ah} u \|_{L^2}  \lesssim  \sqrt{\me(t)} \md(t),
                \deqq
                and if $ 1 < |\bh| < |\ah|$, we can obtain that
                \beqq \bal
             &\;  \i (1+\vro) \p^{\bh} (\frac{\vro}{1+\vro})
				\p^{\ah-\bh}N_1 \cdot \p^{\ah} u \, dx\\
                \lesssim &\; \| \p^{\bh} \vro\|_{L^{2}}^{\f12} \| \p_3 \p^{\bh} \vro\|_{L^{2}}^{\f12} \| \p^{\ah-\bh}N_1\|_{L^2}^{\f12}  \| \p_1 \p^{\ah-\bh}N_1\|_{L^2}^{\f12}  \| \p^{\ah} u \|_{L^2}^{\f12} \|\p_3 \p^{\ah} u \|_{L^2}^{\f12} \\
                \lesssim &\; \sqrt{\me(t)} \md(t).
               \dal \deqq
               Thus, the combination of above estimates yields directly
               \beqq
             V_2 +V_4+V_6 \lesssim \sqrt{\me(t)} \md(t).
               \deqq
               Similarly, integrating by parts, we can check that
             \begin{align*}
               V_3 + V_8 = &\; \i  \p^{\ah} \vro  \, (\vro \, \p^{\ah} \du + \p^{\ah} u \cdot \nabla \vro ) \, dx  \\
               &\;-\sum_{0 \le \beta_h \le \ah} C_{\ah}^{\beta_h} \i  (\p^{\bh} \vro \, \p^{\ah-\bh} \du +  \p^{\ah-\bh} u \cdot \nabla \p^{\bh}  \vro) \, \p^{\ah} \vro \, dx  \\
               = &\;  -\sum_{0 < \beta_h \le \ah} C_{\ah}^{\beta_h} \i  (\p^{\bh} \vro \, \p^{\ah-\bh} \du + \p^{\ah-\bh} u \cdot \nabla \p^{\bh}  \vro ) \, \p^{\ah} \vro \, dx \\
               \lesssim &\; \sqrt{\me(t)} \md(t),
               \end{align*}
               and
               \begin{align*}
               V_5 = &\; - \f12 \i (1+\vro) u \cdot \nabla |\p^{\ah} u|^2 \, dx -\sum_{0 < \beta_h \le \ah} C_{\ah}^{\beta_h} \i  (1+\vro) \p^{\bh} u \, \p^{\ah-\bh} \nabla u \cdot \p^{\ah} u  \, dx\\
               &\; - \f12 \i (1+\vro) \nabla \p^{\ah}(\vro^2) \cdot \p^{\ah} u \, dx\\
                =&\;\f12  \i |\p^{\ah} u|^2 ( (1+\vro) \p^{\ah} \du + \p^{\ah} u \cdot \nabla \vro) \, dx -\sum_{0 < \beta_h \le \ah} C_{\ah}^{\beta_h} \i  (1+\vro) \p^{\bh} u \, \p^{\ah-\bh} \nabla u \cdot \p^{\ah} u  \, dx\\
                &\; + \f12 \sum_{0 \le \beta_h \le \ah} C_{\ah}^{\beta_h} \i   \p^{\bh} \vro \,\p^{\ah-\bh} \vro ( (1+\vro) \p^{\ah} \du + \p^{\ah} u \cdot \nabla \vro) \, dx \\
                := &\; \sum_{i=1}^{3} V_{5,i}.
               \end{align*}
               We now estimate the term  $V_{5,2}$.
               If $\bh=\ah$, the anisotropic type inequality $\eqref{ie:Sobolev}_2$ yields that
                \beqq \bal
&\; \i (1+\vro) \p^{\ah} u
				\cdot \nabla u \cdot \p^{\ah} u \, dx \\
                \lesssim &\; (1+\|\vro\|_{L^{\infty}}) \|\p^{\ah} u\|_{L^2} \| \p_1 \p^{\ah} u\|_{L^2}^{\f12}  \| \p_2 \p^{\ah} u\|_{L^2}^{\f12}
                \|\nabla u\|_{L^2}^{\f12} \|\p_3 \nabla u \|_{L^2}^{\f12} \\
                \lesssim  &\;  \sqrt{\me(t)} \md(t).
                \dal \deqq
                If $|\bh|=1$, we have
                \beqq \bal
              \i (1+\vro) \p^{\bh} u \cdot
				\p^{\ah-\bh} \nabla u \cdot \p^{\ah} u \, dx
                \lesssim &\; \| \p^{\bh} u \|_{L^{\infty}} \| \p^{\ah-\bh} \nabla u\|_{L^2} \| \p^{\ah} u \|_{L^2} (1+\|\vro\|_{L^{\infty}})\\ 
                \lesssim &\; \sqrt{\me(t)} \md(t),
              \dal \deqq
                and if $ 1 < |\bh| < |\ah|$, we can obtain that
                \beqq \bal
             &\;  \i (1+\vro) \p^{\bh} u \cdot
				\p^{\ah-\bh} \nabla u \cdot \p^{\ah} u \, dx \\
                \lesssim &\;  \| \p^{\bh} u \|_{L^{2}}^{\f12} \| \p_1 \p^{\bh} u \|_{L^{2}}^{\f12} \| \p^{\ah-\bh} \nabla u\|_{L^2}^{\f12} \| \p_3  \p^{\ah-\bh} \nabla u\|_{L^2}^{\f12} \| \p^{\ah} u \|_{L^2}^{\f12} \| \p_2 \p^{\ah} u \|_{L^2}^{\f12} (1+\|\vro\|_{L^{\infty}}) \\
                \lesssim &\; \sqrt{\me(t)} \md(t).
                \dal \deqq
                Thus, the combination of above estimates yields directly
               \beqq
             V_{5,2} \lesssim \sqrt{\me(t)} \md(t).
               \deqq
               Similarly, we have
        \beqq
             V_{5,1} +V_{5,3} \lesssim \sqrt{\me(t)} \md(t),
               \deqq
               which, together with the above estimate, yields that
               \beqq 
               V_5 \lesssim \sqrt{\me(t)} \md(t).
               \deqq
               It remains to estimate the term $V_7$ and  $V_9$. Integrating by parts gives directly that
               \beqq \bal
&\; \i b \cdot \p^{\ah} \nabla b \cdot \p^{\ah} u  \, dx + \i b \cdot \p^{\ah} \nabla u \cdot \p^{\ah} b  \, dx  =0,\\
&\; \i u \cdot \p^{\ah} \nabla b \cdot \p^{\ah} b  \, dx  = - \f12 \i \du \, |\p^{\ah}  b|^2 \, dx.
               \dal \deqq
              Then, using the anisotropic type inequality \eqref{ie:Sobolev}, it holds 
               \begin{align*}
&\; V_7 + V_9 \\
= &\; \sum_{0 < \beta_h \le \ah} C_{\ah}^{\beta_h} \i (1+\vro) \p^{\bh} (\f{1}{1+\vro}) \, \p^{\ah-\bh} (b \cdot \nabla b) \cdot \p^{\ah} u  \, dx  +  \sum_{0 < \beta_h \le \ah} C_{\ah}^{\beta_h} \i \p^{\bh} b \cdot \p^{\ah-\bh} \nabla b \cdot \p^{\ah} u  \, dx \\
&\; - \f12 \sum_{0 < \beta_h \le \ah} C_{\ah}^{\beta_h} \i  (1+\vro) \p^{\bh} (\f{1}{1+\vro}) \, \p^{\ah-\bh} \nabla (|b|^2) \cdot \p^{\ah} u  \, dx + \f12 \i  \p^{\ah}( |b|^2 )\, \p^{\ah} \du \, dx\\
&\; - \sum_{0 < \beta_h \le \ah} C_{\ah}^{\beta_h} \i  \p^{\bh} u \cdot \p^{\ah-\bh} \nabla b \cdot \p^{\ah} b  \, dx 
+ \sum_{0 < \beta_h \le \ah} C_{\ah}^{\beta_h} \i  \p^{\bh} b \cdot \p^{\ah-\bh} \nabla u  \cdot \p^{\ah} b  \, dx \\
&\; - \sum_{0 \le \beta_h < \ah} C_{\ah}^{\beta_h} \i    \p^{\bh} b \, \p^{\ah-\bh} \du     \cdot \p^{\ah} b  \, dx - \f12 \i \du \, |\p^{\ah}  b|^2 \, dx\\
\lesssim &\; \sqrt{\me(t)} \md(t).
               \end{align*}
               Combining the estimates from $V_1$ to $V_9$, we obtain that
                \beqq \bal
\f{d}{dt} \f12 \i (  |\p^{\ah} \vro|^2 + (1+\vro) |\p^{\ah} u|^2 +  |\p^{\ah} b|^2 ) \, dx  +\|\p^{\ah} ( \nabla_h  u, \p_1 b) \|_{L^2}^2 
\lesssim  \sqrt{\me(t)} \md(t).
                \dal \deqq
                Combining the above estimates and estimate \eqref{3102}, we finish the proof of this lemma.
        \end{proof}
        
        \subsection{Enhanced dissipation estimates}
		In this subsection, we will establish the dissipative estimates of  the density in the horizontal directions and magnetic field in the $x_2$ direction. 
        First of all, we establish the dissipative estimates of  the magnetic field in the $x_2$ direction.
        \begin{lemm}\label{lemma34}
			Under the assumption \eqref{assumption}, for any smooth solution $(\vro ,u, b)$ of equation \eqref{eqr}, it holds
            \beq\label{p2b01}
			\begin{aligned}
				&\; \f{d}{dt} \i \Big( \p_{2} u \cdot b +  \sum_{|\al|=m-1}   \p_{2} \p^{\al} u \cdot  \p^{\al} b  \Big)\, dx  + \| \p_{2} b\|_{H^{m-1}}^2\\
                \lesssim &\; \| (\p_1 u, \p_2 u, \p_1 b)\|_{H^{m}}^2 +\| \du\|_{H^{m-1}}^2 
				+\sqrt{\me(t)}\md(t).
			\end{aligned}
			\deq
\end{lemm}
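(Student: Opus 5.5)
Apply $\p^\al$ with $0\le|\al|\le m-1$ to the momentum equation $\eqref{eqr}_2$ and pair it with $-\p_2\p^\al b$. This isolates the linear coupling term. The point is that
\[
-\i \p^\al(\nabla\vro+\nabla b_2-\p_2 b)\cdot\p_2\p^\al b\,dx=\|\p_2\p^\al b\|_{L^2}^2,
\]
because ${\rm div}\,b=0$ gives $\i\nabla\p^\al b_2\cdot\p_2\p^\al b\,dx=-\i\p^\al b_2\,\p_2\p^\al{\rm div}\,b\,dx=0$, and likewise $\i\nabla\p^\al\vro\cdot\p_2\p^\al b\,dx=0$. So the density and the $\nabla b_2$ parts of the coupling vanish identically, and we obtain
\[
\|\p_2\p^\al b\|_{L^2}^2=\i \p^\al\big(\p_t u-\p_1^2u-\p_2^2u-\nabla{\rm div}u-F_2\big)\cdot\p_2\p^\al b\,dx .
\]

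\textbf{Time-derivative term.} Write
\[
\i\p_t\p^\al u\cdot\p_2\p^\al b\,dx=-\f{d}{dt}\i\p_2\p^\al u\cdot\p^\al b\,dx+\i\p_2\p^\al u\cdot\p_t\p^\al b\,dx,
\]
after integrating by parts in $x_2$. This produces the time derivative in \eqref{p2b01}, with the sign fixed by moving it to the left-hand side. For the remaining integral, substitute $\p_t b=\p_1^2b-e_2{\rm div}u+\p_2u+F_3$. The pieces are handled as follows:
\begin{itemize}
\item The linear part $\i\p_2\p^\al u\cdot\p^\al(\p_1^2 b-e_2{\rm div}u+\p_2u)\,dx$ is bounded by $\|(\p_1u,\p_2u,\p_1b)\|_{H^m}^2+\|{\rm div}u\|_{H^{m-1}}^2$. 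For $\p_1^2 b$, move one $\p_1$ onto $\p_2\p^\al u$; this uses $|\al|+1\le m$.
\item The nonlinear part $\i\p_2\p^\al u\cdot\p^\al F_3\,dx$ is bounded by $\sqrt{\me(t)}\md(t)$ using Lemma \ref{lemm:sobolev-ie}. We can afford this because $\p_2\p^\al u$ carries dissipation in $H^{m-1}$ and $F_3$ has only $m$ derivatives in total.
\end{itemize}

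\textbf{Viscous and nonlinear terms.} The viscous terms $\i\p^\al(\p_1^2u+\p_2^2u+\nabla{\rm div}u)\cdot\p_2\p^\al b\,dx$ are handled by Young's inequality:
\[
\le\tfrac14\|\p_2\p^\al b\|_{L^2}^2+C\|(\p_1u,\p_2u)\|_{H^m}^2+C\|\nabla{\rm div}\,u\|_{H^{m-1}}^2 .
\]
Since $\|\nabla{\rm div}u\|_{H^{m-1}}\le\|{\rm div}u\|_{H^m}$, the last term is also dissipation. It is not literally of the form displayed on the right of \eqref{p2b01}, so I would either bound it by $\|{\rm div}u\|_{H^m}^2\le\md(t)$, or integrate $\nabla$ by parts so that it becomes $\i\p^\al{\rm div}u\,\p_2\p^\al{\rm div}b\,dx=0$. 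The latter is cleaner: the $\nabla{\rm div}u$ contribution vanishes identically by ${\rm div}\,b=0$, leaving only $\p_1^2u$ and $\p_2^2u$.

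The term $\i\p^\al F_2\cdot\p_2\p^\al b\,dx$ must be bounded by $\sqrt{\me(t)}\md(t)$ plus an absorbable $\frac14\|\p_2 b\|_{H^{m-1}}^2$. This is the main obstacle. The quasilinear pieces of $F_2$, namely $\frac{\vro}{1+\vro}(\p_1^2u+\p_2^2u+\nabla{\rm div}u-\nabla b_2+\p_2 b)$, the pressure term $\vro\nabla\vro$, and the Lorentz terms $\frac{1}{1+\vro}(b\cdot\nabla b-\frac12\nabla|b|^2)$, contain up to $m$ derivatives of $(\vro,b)$ in non-dissipated directions.

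\textbf{Handling $F_2$.} I would use Lemma \ref{lemm:sobolev-ie} with the $L^2$ factor placed on the highest-order derivative, and always take one horizontal-derivative factor from $\p_2\p^\al b$ or from the other low-order factor, which lies in $\md$. Two structural facts help.
\begin{itemize}
\item For $\nabla\vro$ and $\nabla|b|^2$, and for $\nabla b_2$ and $\nabla{\rm div}u$ times the coefficient, integrate the gradient by parts onto $\p_2\p^\al b$. The top-order part then vanishes by ${\rm div}\,b=0$, and only commutators with $\nabla(\vro/(1+\vro))$ remain, which have one fewer derivative on the highest factor.
\item For $b\cdot\nabla b$, the worst factor is $b\cdot\nabla\p^\al b$ with $|\al|=m-1$, which carries at most $m$ derivatives and sits in $\me$. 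Pair it with $\|\p_2\p^\al b\|_{L^2}$ and with $\|b\|_{L^\infty}$. The $L^\infty$ factor is controlled through the first inequality in \eqref{ie:Sobolev} by $\|\p_1 b,\p_2 b\|_{H^2}$-type quantities, giving $\md^{1/2}$. This yields $\sqrt{\me}\md$.
\end{itemize}

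Summing over $|\al|\le m-1$ and absorbing the $\frac14$ terms gives \eqref{p2b01}.
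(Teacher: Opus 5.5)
Your skeleton matches the paper's. You apply $\partial^\alpha$ to the momentum equation and pair it with $\partial_2\partial^\alpha b$. You let ${\rm div}\,b=0$ kill the $\nabla\varrho$, $\nabla b_2$ and $\nabla{\rm div}\,u$ terms, trade $\partial_t u$ for $\partial_t b$ through the induction equation, and use Young's inequality on $\partial_1^2u,\partial_2^2u$.

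\textbf{The gap.} It lies in the step you flag as the main obstacle: the top-order Lorentz term $\int_{\mathbb{R}^3}\frac{1}{1+\varrho}\,b\cdot\nabla\partial^\alpha b\cdot\partial_2\partial^\alpha b\,dx$ with $|\alpha|=m-1$. You bound it by $\|b\|_{L^\infty}\|\nabla\partial^\alpha b\|_{L^2}\|\partial_2\partial^\alpha b\|_{L^2}$. You put the middle factor in $\mathcal{E}$ and claim $\|b\|_{L^\infty}\lesssim\mathcal{D}^{1/2}$ from the first inequality of Lemma~\ref{lemm:sobolev-ie}.

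That claim is false. The first inequality also contains the factors $\|b\|_{L^2}^{1/8}\|\partial_3 b\|_{L^2}^{1/8}$. The only dissipation of $b$ in $\mathcal{D}$ is $\partial_1 b\in H^m$ and $\partial_2 b\in H^{m-1}$, so $\|b\|_{L^2}$ and $\|\partial_3 b_h\|_{L^2}$ are pure energy. You therefore only get $\|b\|_{L^\infty}\lesssim\mathcal{E}^{1/8}\mathcal{D}^{3/8}$. Your product is then $\mathcal{E}^{5/8}\mathcal{D}^{7/8}$, which is not $\lesssim\sqrt{\mathcal{E}}\,\mathcal{D}$ and does not close the bootstrap.

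No sharper $L^\infty$ bound is available either. $\mathcal{D}$ controls only horizontal derivatives of $b_h$, and two-dimensional $\dot H^1$-type control does not give $L^\infty$. The same objection applies if you handle $u\cdot\nabla\partial^\alpha u$ in $F_2$, or $u\cdot\nabla\partial^\alpha b$ in the $F_3$ pairing, through $\|u\|_{L^\infty}$.

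\textbf{The missing observation.} The $m$-th order factor is not a pure energy quantity. Because $\partial_1 b\in H^m$ is dissipated, $\partial_1\nabla\partial^\alpha b$ lies in $\mathcal{D}$. So the horizontal derivative must go on the top-order factor, not (as in your recipe) only on $\partial_2\partial^\alpha b$ or on the low-order factor. Keep the full $L^2$ norm on $\partial_2\partial^\alpha b$ and use the second inequality in \eqref{ie:Sobolev} with $i=1$, $j=2$, $k=3$:
\[
\int_{\mathbb{R}^3}|b|\,|\nabla\partial^\alpha b|\,|\partial_2\partial^\alpha b|\,dx
\lesssim \|\partial_2\partial^\alpha b\|_{L^2}\,\|\nabla\partial^\alpha b\|_{L^2}^{\frac12}\|\partial_1\nabla\partial^\alpha b\|_{L^2}^{\frac12}\,\|b\|_{L^2}^{\frac14}\|\partial_2 b\|_{L^2}^{\frac14}\|\partial_3 b\|_{L^2}^{\frac14}\|\partial_{23} b\|_{L^2}^{\frac14}
\lesssim \sqrt{\mathcal{E}(t)}\,\mathcal{D}(t).
\]
This is exactly the splitting the paper uses for $VI_4$ at $\alpha=0$, and then invokes ``similarly'' at $|\alpha|=m-1$.

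\textbf{The other nonlinear terms.} The same splitting, now using $\partial_1\nabla\partial^\alpha u\in\mathcal{D}$, handles $u\cdot\nabla\partial^\alpha u$. For the $F_3$ pairing, the paper uses the third inequality with $\|\partial_2\partial^\alpha u\|_{L^2}^{1/2}\|\partial_3\partial_2\partial^\alpha u\|_{L^2}^{1/2}$ and $\|\partial^{\alpha-\beta}\nabla b\|_{L^2}^{1/2}\|\partial_1\partial^{\alpha-\beta}\nabla b\|_{L^2}^{1/2}$. Both factors of the first product are dissipated because $\partial_2 u\in H^m$, which is more than the ``dissipation in $H^{m-1}$'' you cite.

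With these corrections your argument yields \eqref{p2b01}. One small point: to obtain exactly the functional in \eqref{p2b01}, use only $\alpha=0$ and $|\alpha|=m-1$. This suffices because $\|\partial_2 b\|_{H^{m-1}}^2$ is equivalent to $\|\partial_2 b\|_{L^2}^2+\sum_{|\alpha|=m-1}\|\partial^\alpha\partial_2 b\|_{L^2}^2$.
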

\begin{proof}
    For any $|\al|\le m-1$, the equation $\eqref{eqr}_2$  and the condition ${\rm div} b =0$ yield that
			\beq\label{p2b02} \bal
			\|\p^{\al} \p_2 b\|_{L^2}^2  = &\;  \i \p_t \p^{\al} u \cdot \p^{\al} \p_2 b\, dx + \i \p^{\al} (\p_{1}^2 u+\p_{2}^2 u) \cdot \p^{\al} \p_2 b \, dx\\
            &\; + \i \p^{\al} (\nabla {\rm div}u + \nabla \vro-\nabla b_2) \cdot \p^{\al} \p_2 b \, dx
			 - \i \p^{\al} F_2 \cdot \p^{\al} \p_2 b \, dx\\
            = &\;  \i \p_t \p^{\al} u \cdot \p^{\al} \p_2 b\, dx + \i \p^{\al} (\p_{1}^2 u+\p_{2}^2 u) \cdot \p^{\al} \p_2 b \, dx
			 - \i \p^{\al} F_2 \cdot \p^{\al} \p_2 b \, dx.
			\dal \deq
            Using the equation $\eqref{eqr}_3$ and integrating by parts, we conclude
            \begin{align*}
				 \i \p_t \p^{\al} u \cdot \p^{\al} \p_2 b\, dx 
				= &\;  \p_t \i \p^{\al} u \cdot \p^{\al} \p_2 b\, dx  -\i  \p^{\al} u \cdot \p^{\al} \p_t \p_2 b \, dx \\
				= &\; -   \p_t \i \p^{\al} \p_2 u \cdot \p^{\al} b \, dx   +\i  \p^{\al} \p_2  u \cdot \p^{\al}  (\p_1^2 b + \p_2 u - e_2 \du+F_3 ) \, dx.
			\end{align*}
            Substituting above equality into \eqref{p2b02}, we have
            \beqq \bal
&\;  \f{d}{dt} \i \p^{\al} \p_2 u \cdot \p^{\al} b \, dx  + \| \p^{\al} \p_2 b\|_{L^2}^2\\
= &\; 
 \i  \p^{\al} \p_2  u \cdot \p^{\al}  (\p_1^2 b + \p_2 u - e_2 \du) \, dx + \i \p^{\al} (\p_{1}^2 u+\p_{2}^2 u) \cdot \p^{\al} \p_2 b \, dx \\
			&\;+ \i  \p^{\al} \p_2  u \cdot \p^{\al} F_3  \, dx    - \i \p^{\al} F_2 \cdot \p^{\al} \p_2 b \, dx\\
            :=&\; \sum_{i=1}^{4} VI_{i}.
            \dal \deqq
            Using the H\"older inequality, we have
            \beqq \bal
            VI_1 \lesssim &\; \| \p^{\al} \p_2 u\|_{L^2} (\| \p^{\al} \p_1^2 b\|_{L^2}  + \| \p^{\al} \p_2 u\|_{L^2}+\| \p^{\al} \du\|_{L^2}),\\
            VI_2 \le &\;  \nu \| \p^{\al} \p_2 b\|_{L^2}^2 + C_{\nu}(\| \p^{\al} \p_1^2 u\|_{L^2}^2 + \| \p^{\al} \p_2^2 u\|_{L^2}^2 ).
            \dal \deqq
            Now we estimate the terms $VI_3$ and $VI_4$. {\textbf{First, we estimate the case $\al=0$}.}
            Using the anisotropic type inequality \eqref{ie:Sobolev}, we have
            \beqq \bal
            VI_3 = &\;  \i  \p_2 u \cdot (- u \cdot  \nabla b +  b \cdot  \nabla u - b \,  \du) \, dx\\
            \lesssim &\;  \| \p_2 u\|_{L^2}^{\f12}  \| \p_3 \p_2 u\|_{L^2}^{\f12} \Big( \| u\|_{L^2}^{\f12} \| \p_2  u\|_{L^2}^{\f12} \| \nabla b\|_{L^2}^{\f12} \| \p_1  \nabla b\|_{L^2}^{\f12} + \| b\|_{L^2}^{\f12} \| \p_1 b\|_{L^2}^{\f12} \|\nabla u\|_{L^2}^{\f12} \| \p_2 \nabla u\|_{L^2}^{\f12} \\&\; 
            + \| b\|_{L^2}^{\f12} \| \p_1 b\|_{L^2}^{\f12} \| \du\|_{L^2}^{\f12} \| \p_2 \du\|_{L^2}^{\f12} \Big) \\
            \lesssim &\; \sqrt{\me(t)} \md(t),
            \dal \deqq
            and using the condition ${\rm div} u =0$, we have
            \beqq \bal
            VI_4 = &\; - \i u \cdot \nabla u \cdot \p_2 b \, dx - \f12 \i  \nabla |\vro|^2 \cdot \p_2 b \, dx   + \i \frac{1}{1+\vro} (b\cdot \nabla b -\nabla |b|^2)  \cdot \p_2 b \, dx \\
            &\; - \i \frac{\vro}{1+\vro}
				(\p_{1}^2 u+\p_{2}^2 u+\nabla {\rm div}u -\nabla b_2+\p_2 b) \cdot \p_2 b \, dx \\
               = &\; - \i u \cdot \nabla u \cdot \p_2 b \, dx  - \i \frac{\vro}{1+\vro}
				(\p_{1}^2 u+\p_{2}^2 u+\nabla {\rm div}u -\nabla b_2+\p_2 b) \cdot \p_2 b \, dx \\
               &\;  + \i \frac{1}{1+\vro} (b\cdot \nabla b -\nabla |b|^2)  \cdot \p_2 b \, dx \\
               \lesssim &\; \| \p_2 b\|_{L^2} \| (\vro, u, b)\|_{L^2}^{\f14}\| \p_2(\vro, u, b)\|_{L^2}^{\f14}\|\p_3 (\vro, u, b)\|_{L^2}^{\f14} \|\p_{23} (\vro, u, b)\|_{L^2}^{\f14} \|\nabla(\vro, u, b)\|_{L^2}^{\f12} \|\p_1 \nabla(\vro, u, b)\|_{L^2}^{\f12}\\
               &\; + \| \p_2 b\|_{L^2} \| \vro\|_{L^{\infty}} (\| \Delta_h u\|_{L^2}  + \| \nabla \du\|_{L^2}) \\
               \lesssim &\; \sqrt{\me(t)} \md(t).
            \dal \deqq
            Combining the estimates from $VI_1$ to $VI_4$ and using the smallness of $\nu$, we have
            \beq\label{p2b03} \bal
\f{d}{dt} \i  \p_2 u \cdot  b \, dx  + \| \p_2 b\|_{L^2}^2 \lesssim  \sqrt{\me(t)} \md(t) + \| (\p_1^2 u, \p_2^2 u, \p_1^2 b)\|_{L^2}^2  + \|\p_2 u\|_{L^2}^2 + \| \du\|_{L^2}^2 .
            \dal \deq
            {\textbf{Finally, we estimate the case $|\al|=m-1$}.}
            Using the anisotropic type inequality \eqref{ie:Sobolev}, we can check that
            \beqq \bal
            VI_3 = &\;  \sum_{0 \le \beta \le \al} C_{\al}^{\beta} \i \p^{\al} \p_2 u \cdot (-\p^{\beta} u \cdot  \p^{\al- \beta} \nabla b + \p^{\beta} b \cdot  \p^{\al- \beta} \nabla u -\p^{\beta} b \, \p^{\al- \beta} \du) \, dx\\
            \lesssim &\; \sum_{0 \le \beta \le \al}  \| \p^{\al} \p_2 u\|_{L^2}^{\f12}  \| \p_3 \p^{\al} \p_2 u\|_{L^2}^{\f12} \Big( \| \p^{\beta} u\|_{L^2}^{\f12} \| \p_2 \p^{\beta} u\|_{L^2}^{\f12} \| \p^{\al-\beta} \nabla b\|_{L^2}^{\f12} \| \p_1 \p^{\al-\beta} \nabla b\|_{L^2}^{\f12} \\&\; 
            + \| \p^{\beta} b\|_{L^2}^{\f12} \| \p_1 \p^{\beta} b\|_{L^2}^{\f12} (\| \p^{\al-\beta} \nabla u\|_{L^2}^{\f12} \| \p_2 \p^{\al-\beta} \nabla u\|_{L^2}^{\f12} + \| \p^{\al-\beta}\du\|_{L^2}^{\f12} \| \p_2 \p^{\al-\beta} \du\|_{L^2}^{\f12}) \Big) \\
            \lesssim &\; \sqrt{\me(t)} \md(t).
            \dal \deqq
            Similarly, we can check that
            \beqq
VI_4 \lesssim \sqrt{\me(t)} \md(t).
            \deqq
            Thus, combining the estimates from $VI_1$ to $VI_4$ and using the smallness of $\nu$, we have
            \beqq\bal
\f{d}{dt} \i  \p^{\al} \p_2 u \cdot  \p^{\al}  b \, dx  + \|  \p^{\al}  \p_2 b\|_{L^2}^2 \lesssim  \sqrt{\me(t)} \md(t) + \|  \p^{\al} (\p_1^2 u, \p_2^2 u, \p_1^2 b)\|_{L^2}^2  + \| \p^{\al}  \p_2 u\|_{L^2}^2 + \|  \p^{\al}  \du\|_{L^2}^2,
            \dal \deqq
            together with the estimate \eqref{p2b03}, we finish the proof of this lemma.
\end{proof}
Next, we establish the dissipative estimates of  the density in the horizontal directions.
\begin{lemm}\label{lemma35}
			Under the assumption \eqref{assumption}, for any smooth solution $(\vro ,u, b)$ of equation \eqref{eqr}, it holds, 
			\beq \label{phvro01}
			\begin{aligned}
				&\; \f{d}{dt} \i \Big(u_h \cdot \nabla_h  \vro +  \sum_{|\al|=m-1} \p^{\al}  u_h \cdot \nabla_h \p^{\al} \vro  \Big)\ dx  + \|\nabla_h  \vro\|_{H^{m-1}}^2  \\
				\lesssim &\;\| (\p_1^2 u, \p_2^2 u, \p_1 b)\|_{H^{m-1}}^2  + \| \du \|_{H^{m-1}}^2 + \|\p_2 b\|_{H^{m-1}}^2  + \sqrt{\me(t)}\md(t).
			\end{aligned}
			\deq
		\end{lemm}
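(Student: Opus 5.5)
The plan is to mimic the proof of Lemma \ref{lemma34}, now testing the horizontal part of the momentum equation $\eqref{eqr}_2$ against $\nabla_h\p^{\al}\vro$ for $\al=0$ and for $|\al|=m-1$. For such $\al$ the horizontal components of $\eqref{eqr}_2$ give
\beqq
\|\nabla_h\p^{\al}\vro\|_{L^2}^2=-\i\p_t\p^{\al}u_h\cdot\nabla_h\p^{\al}\vro\,dx+\i\p^{\al}\big(\p_1^2u_h+\p_2^2u_h+\nabla_h\du\big)\cdot\nabla_h\p^{\al}\vro\,dx-\i\p^{\al}\big(\nabla_hb_2-\p_2b_h\big)\cdot\nabla_h\p^{\al}\vro\,dx+\i\p^{\al}(F_2)_h\cdot\nabla_h\p^{\al}\vro\,dx .
\deqq
The magnetic coupling term and the $\p_1^2u_h,\p_2^2u_h$ terms are handled by Young's inequality with a small $\nu$. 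For the magnetic term, note that $\nabla_h b_2-\p_2 b_h=(\p_1b_2-\p_2b_1,0,0)$. This contributes $\nu\|\nabla_h\p^{\al}\vro\|_{L^2}^2+C_\nu\|\p^{\al}(\p_1b,\p_2b)\|_{L^2}^2$, which is covered by $\|(\p_1b,\p_2b)\|_{H^{m-1}}^2$. The $\nu$-terms are absorbed on the left.

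\textbf{Time derivative and the $\nabla_h\du$ term.} I would move the time derivative off $u$:
\beqq
-\i\p_t\p^{\al}u_h\cdot\nabla_h\p^{\al}\vro\,dx=-\f{d}{dt}\i\p^{\al}u_h\cdot\nabla_h\p^{\al}\vro\,dx-\i\nabla_h\cdot\p^{\al}u_h\,\p^{\al}\p_t\vro\,dx .
\deqq
Then I substitute $\p_t\vro=-\du+F_1$ from $\eqref{eqr}_1$. For the linear part $\i\nabla_h\cdot\p^{\al}u_h\,\p^{\al}\du\,dx$ I would use $\nabla_h\cdot u_h=\du-\p_3u_3$ and Cauchy--Schwarz to bound it by $\|\du\|_{H^{m-1}}^2$ plus a cross term. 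The $\nabla_h\du$ term I would not estimate by Young's inequality, since that would cost $\|\du\|_{H^m}$. Instead I would use $\p_i\du=-\p_t\p_i\vro+\p_iF_1$, so that
\beqq
\i\p^{\al}\nabla_h\du\cdot\nabla_h\p^{\al}\vro\,dx=-\f12\f{d}{dt}\|\nabla_h\p^{\al}\vro\|_{L^2}^2+\i\p^{\al}\nabla_hF_1\cdot\nabla_h\p^{\al}\vro\,dx .
\deqq
In this way the dangerous term becomes a time derivative of a quantity bounded by $\me(t)$. The nonlinear remainder $\i\p^{\al}\nabla_hF_1\cdot\nabla_h\p^{\al}\vro\,dx$ is bounded by $\sqrt{\me(t)}\md(t)$. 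At top order this uses the commutator structure of $u\cdot\nabla\vro$, with $\i u\cdot\nabla\nabla_h\p^{\al}\vro\cdot\nabla_h\p^{\al}\vro\,dx=-\f12\i\du|\nabla_h\p^{\al}\vro|^2\,dx$, and then the anisotropic inequalities of Lemma \ref{lemm:sobolev-ie}.

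\textbf{Remaining nonlinear terms.} The terms $\i\p^{\al}(F_2)_h\cdot\nabla_h\p^{\al}\vro\,dx$ and $\i\nabla_h\cdot\p^{\al}u_h\,\p^{\al}F_1\,dx$ are estimated exactly as $VI_3$ and $VI_4$ in Lemma \ref{lemma34}. One always places a horizontal derivative on a factor controlled by $\md$, namely $\nabla_h\vro$ in $H^{m-1}$ or $\nabla_hu,\du,\p_1b$ in $H^m$. This gives $\sqrt{\me(t)}\md(t)$ for each piece. The only structurally delicate piece is $-\f{\vro}{1+\vro}\nabla_h\du$ inside $F_2$; I would treat it with the same time-derivative substitution, the extra factor $\vro$ giving smallness. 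Summing over $\al=0$ and $|\al|=m-1$ then yields \eqref{phvro01}.

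\textbf{Main obstacle.} I expect the hard step to be the linear cross term $\i\nabla_h\cdot\p^{\al}u_h\,\p^{\al}\du\,dx$, together with the fact that the $\nabla_h\du$ manipulation produces the extra energy $\f12\|\nabla_h\p^{\al}\vro\|_{L^2}^2$. The stated right-hand side lists only $\|(\p_1^2u,\p_2^2u,\p_1b)\|_{H^{m-1}}$, $\|\du\|_{H^{m-1}}$ and $\|\p_2b\|_{H^{m-1}}$. First I would try to rewrite $\nabla_h\cdot u_h=\du-\p_3u_3$ and integrate the $\p_3u_3$ part by parts against $\p^{\al}\du$, hoping it can be expressed through $\p^{\al}\du$ alone. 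If that fails, I would keep the half-energy $\f12\|\nabla_h\p^{\al}\vro\|_{L^2}^2$ inside the $\f{d}{dt}$ bracket; it is bounded by $\me(t)$, so it does not affect the bootstrap in Proposition \ref{main_pro}.
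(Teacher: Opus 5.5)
Your skeleton is the paper's. You test the horizontal part of $\eqref{eqr}_2$ with $\nabla_h\p^{\al}\vro$, move $\p_t$ onto $\vro$ using $\eqref{eqr}_1$, apply Young to the linear couplings, and use the anisotropic inequalities on the remainders. Every place where you depart from it comes from trying to hit the literal right-hand side of \eqref{phvro01}, and that right-hand side is too small for this argument. The paper's Young bound on $VII_3$ really yields $\|\p^{\al}\nabla_h\du\|_{L^2}^2$, although it is written as $\|\p^{\al}\du\|_{L^2}^2$. The linear part of $VII_1$ is $\lesssim\|\nabla_h u\|_{H^{m-1}}\|\du\|_{H^{m-1}}$, which is only $\lesssim\md(t)$, not $\sqrt{\me(t)}\md(t)$. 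What the proof of Proposition~\ref{main_pro} actually uses is the version with $\|(\p_1u,\p_2u,\p_1b)\|_{H^m}^2+\|\du\|_{H^{m-1}}^2$ on the right, i.e.\ dissipation already produced by Lemmas~\ref{lemma32} and~\ref{lemma33}. In particular $\p_i\du=\sum_j\p_j(\p_iu_j)$, so $\|\nabla_h\du\|_{H^{m-1}}\lesssim\|\nabla_hu\|_{H^m}$. The term you go out of your way to avoid is therefore ordinary horizontal dissipation of $u$, and Young's inequality suffices.

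The step that would actually fail as you planned it is the cross term. Rewriting $-\i\p^{\al}\p_3u_3\,\p^{\al}\du\,dx$ ``through $\p^{\al}\du$ alone'' is impossible. The cross term has size $\|\nabla_h u\|\,\|\du\|$, which is much larger than $\|\du\|^2$ when $\du$ is small and $\nabla_h\cdot u_h\approx-\p_3u_3$. Your fallback of keeping $\f12\|\nabla_h\p^{\al}\vro\|_{L^2}^2$ in the bracket does not touch this term. The fix is just Cauchy--Schwarz, giving $\lesssim\|\nabla_hu\|_{H^{m-1}}\|\du\|_{H^{m-1}}$, as in the paper.

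Your continuity-equation treatment of $\p^{\al}\nabla_h\du$ is correct, since the remainder $\i\p^{\al}\nabla_hF_1\cdot\nabla_h\p^{\al}\vro\,dx$ is $\lesssim\sqrt{\me(t)}\md(t)$. However, it proves a variant of \eqref{phvro01} with $\f12\|\nabla_h\p^{\al}\vro\|_{L^2}^2$ added inside $\f{d}{dt}$. That is harmless for Proposition~\ref{main_pro}, because the added term is nonnegative and $\lesssim\me(t)$, but it is not needed.

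Likewise the piece $\f{\vro}{1+\vro}\nabla_h\du$ of $F_2$ needs no substitution. It is bounded by $\|\vro\|_{L^\infty}\|\du\|_{H^m}\|\nabla_h\vro\|_{H^{m-1}}$ plus commutators, since $\|\du\|_{H^m}^2\le\md(t)$. You already use that fact for the $\vro\,\p^{\al}\nabla_h\du$ piece of $\nabla_hF_1$.

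With these changes your argument coincides with the paper's.
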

            \begin{proof}
                For any $|\al|\le m-1$, the equation $\eqref{eqr}_2$ yields that
			\beqq \bal
			\|\p^{\al} \nabla_h \vro\|_{L^2}^2  = &\; -\i \p_t \p^{\al} u_h \cdot \nabla_h \p^{\al} \vro \ dx + \i \p^{\al} (F_2)_h \cdot \nabla_h \p^{\al} \vro \ dx \\
            &\; +   \i \p^{\al}(\Delta_h u_h + \nabla_h \du + \p_2 b_h - \nabla_h b_2) \cdot \nabla_h \p^{\al} \vro \ dx 
			\dal \deqq
			Using the density equation $\eqref{eqr}_1$ and integrating by parts, we conclude
			\beqq \bal
			&\; -\i \p_t \p^{\al} u_h \cdot \nabla_h \p^{\al} \vro \ dx \\ 
			= &\; - \p_t \i \p^{\al} u_h \cdot \nabla_h \p^{\al} \vro \ dx  +\i  \p^{\al} u_h \cdot \nabla_h \p^{\al} \p_t \vro \ dx \\
			= &\;  -  \p_t \i \p^{\al} u_h \cdot \nabla_h \p^{\al} \vro \ dx   -\i  \p^{\al} u_h \cdot \nabla_h \p^{\al} (\du + u \cdot \nabla \vro + \vro \, \du)\ dx \\
			= &\;  -  \p_t \i \p^{\al} u_h \cdot \nabla_h \p^{\al} \vro \ dx   +\i  \p^{\al} \nabla_h \cdot  u_h \cdot \p^{\al} (\du + u \cdot \nabla \vro + \vro \, \du)\ dx. 
			\dal \deqq
            Thus, we have
			\begin{align*}
				&\; \f{d}{dt} \i \p^{\al}  u_h \cdot \nabla_h \p^{\al} \vro \ dx  + \|\p^{\al} \nabla_h  \vro\|_{L^2}^2\\
				= &\;   \i  \p^{\al}  \nabla_h \cdot   u_h \cdot \p^{\al} (\du + u \cdot \nabla \vro + \vro \, \du)\ dx + \i \p^{\al} (F_2)_h \cdot \nabla_h \p^{\al} \vro \ dx \\
				&\; +   \i \p^{\al}(\Delta_h u_h + \nabla_h \du + \p_2 b_h - \nabla_h b_2) \cdot \nabla_h \p^{\al} \vro \ dx\\
                := &\; \sum_{i=1}^{3} VII_{i}.
                \end{align*}
                It is easy to check that
                \beqq
               VII_3 \le \nu \| \nabla_h \p^{\al} \vro\|_{L^2}^2 + C_{\nu} (\|\p^{\al} (\p_1^2 u, \p_2^2 u, \p_1 b)\|_{L^2}^2+ \|\p^{\al} \du \|_{L^2}^2 + \|\p^{\al} \p_2 b\|_{L^2}^2 ).
                \deqq
                {\textbf{First, we estimate the case $\al=0$}.} Using the anisotropic type inequality \eqref{ie:Sobolev}, we have
                \beqq \bal
                VII_1 \lesssim &\; \| \nabla_h u\|_{L^2} \| \du\|_{L^2}(1+ \|\vro\|_{L^{\infty}} )+  \| \nabla_h u\|_{L^2} \| \nabla \vro\|_{L^2}^{\f12} \| \p_1 \nabla \vro\|_{L^2}^{\f12} \| u\|_{L^2}^{\f14} \|\p_2 u\|_{L^2}^{\f14} \| \p_3 u\|_{L^2}^{\f14}\| \p_{23} u\|_{L^2}^{\f14}  \\
                \lesssim  &\; \sqrt{\me(t)}\md(t),
                \dal \deqq
                and similar to the estimate of $VI_4$, we can obtain that
                \begin{align*}
                VII_2 = &\;  - \i u \cdot \nabla_h u \cdot \nabla_h \vro \, dx - \f12 \i  \nabla_h |\vro|^2 \cdot \nabla_h \vro  \, dx  \\
                &\; - \i \frac{\vro}{1+\vro}
				(\p_{1}^2 u_h+\p_{2}^2 u_h+\nabla_h {\rm div}u -\nabla_h b_2+\p_2 b_h) \cdot \nabla_h \vro  \, dx \\
               &\;  + \i \frac{1}{1+\vro} (b\cdot \nabla_h b -\nabla_h |b|^2)  \cdot \nabla_h \vro \, dx \\
               \lesssim  &\; \sqrt{\me(t)}\md(t),
                \end{align*}
                which, together with the estimates of $VII_1$ and $VII_3$, using the smallness of $\nu$, yields that
                \beq \label{phvro02}
 \f{d}{dt} \i  u_h \cdot \nabla_h  \vro \ dx  + \|\nabla_h  \vro\|_{L^2}^2 \lesssim  \| (\p_1^2 u, \p_2^2 u, \p_1 b)\|_{L^2}^2 + \| \du \|_{L^2}^2 + \|\p_2 b\|_{L^2}^2  + \sqrt{\me(t)}\md(t).
                \deq
                {\textbf{Finally, we deal with the case $|\al|=m-1$}.} Similar to the estimate \eqref{p2b02}, we can obtain that
                 \beqq \bal
&\;  \f{d}{dt} \i \p^{\al} u_h \cdot \nabla_h \p^{\al}  \vro \ dx  + \|\nabla_h \p^{\al}  \vro\|_{L^2}^2 \\ \lesssim  &\; \| \p^{\al} (\p_1^2 u, \p_2^2 u, \p_1 b)\|_{L^2}^2+ \| \p^{\al} \du \|_{L^2}^2 + \|\p^{\al} \p_2 b\|_{L^2}^2  + \sqrt{\me(t)}\md(t),
                \dal \deqq
                here we omit the proof for simplicity. 
                Combining the above estimate and \eqref{phvro02}, we finish the proof of this lemma.
            \end{proof}
            
    \subsection{Proof of Proposition \ref{main_pro}}

    In this subsection, we will prove Proposition \ref{main_pro} and Theorem \ref{main_result}.
     First, we will give the proof of Proposition \ref{main_pro}.
		Indeed,  the combination of estimates  \eqref{p3m01} and \eqref{3101}
		yields directly
		\beqq
		\begin{aligned}
	&\;  \frac{d}{dt}\Big\{ \| (\vro, \sqrt{1+\vro}\, u, b)\|_{L^2}^2 + \sum\limits_{|\alpha_h| = m}\| (\p^{\ah}\vro, \sqrt{1+\vro}\, \p^{\ah} u, \p^{\al_h} b)\|_{L^2}^2 + \| (\p_3^m \vro, \sqrt{1+\vro}\, \p_3^m  u, \p_3^m  b)\|_{L^2}^2 \Big\} \\
    &\; + \frac{d}{dt} F(\vro, u, b) +  \|(\nabla_h u, \du,\p_1  b)\|_{H^m}^2\lesssim \sqrt{\me(t)}\md(t),   
		\end{aligned}
		\deqq
        where $F(\vro, u,b)$ is defined in \eqref{def-F}. 
       From the estimates \eqref{p2b01} and \eqref{phvro01}, for some small positive constant $\kappa_1$, we have
       \beqq \bal
       &\; \f{d}{dt} \i \Big( \p_{2} u \cdot b +  \sum_{|\al|=m-1}   \p_{2} \p^{\al} u \cdot  \p^{\al} b + \kappa_1 u_h \cdot \nabla_h  \vro +   \kappa_1 \sum_{|\al|=m-1} \p^{\al}  u_h \cdot \nabla_h \p^{\al} \vro  \Big)\, dx  \\
       &\; + \kappa_1 (\| \p_{2} b\|_{H^{m-1}}^2+  \|\nabla_h  \vro\|_{H^{m-1}}^2 )
				\lesssim \| (\p_1 u, \p_2 u, \p_1 b)\|_{H^{m}}^2 + \| \du\|_{H^{m-1}}^2 +\sqrt{\me(t)}\md(t).
       \dal \deqq
		Then, for some small positive constant $\kappa_2$, we can check
		\beqq
		\begin{aligned}
			\f{d}{dt} \widetilde{\me}(t) + 2 \ka_1 \ka_2 \md(t) \lesssim \sqrt{\me(t)}\md(t),
		\end{aligned}
		\deqq
        where the quantity $\widetilde{\me}(t)$ is defined as 
        \beqq
		\begin{aligned}
			\widetilde{\me}(t):= &\;  \| (\vro, \sqrt{1+\vro}\, u, b)\|_{L^2}^2 + \sum\limits_{|\alpha_h| = m}\| (\p^{\ah}\vro, \sqrt{1+\vro}\, \p^{\ah} u, \p^{\al_h} b)\|_{L^2}^2  + \| (\p_3^m \vro, \sqrt{1+\vro}\, \p_3^m  u, \p_3^m  b)\|_{L^2}^2 \\
            &\;+  F(\vro, u, b) + 2 \kappa_2  \i \Big( \p_{2} u \cdot b +  \sum_{|\al|=m-1}   \p_{2} \p^{\al} u \cdot  \p^{\al} b \Big) \, dx \\
            &\;+ 2 \kappa_1 \kappa_2 \i \Big(u_h \cdot \nabla_h  \vro  +  \sum_{|\al|=m-1} \p^{\al}  u_h \cdot \nabla_h \p^{\al} \vro \Big)  \, dx.
		\end{aligned}
		\deqq
        Under the assumption \eqref{assumption}, due to the smallness of $\kappa_1$ and $\kappa_2$, it is easy to check that
        $\widetilde{\me}(t)$ is equivalent to  $\me(t)$. Thus, due to the a prior assumption \eqref{assumption}, we have
        \beqq
        	\f{d}{dt} \widetilde{\me}(t) + \ka_1 \ka_2 \md(t) \le 0,
        \deqq
        which yields that for any $t \in (0,T]$
        \beqq
         \underset{0\le \tau \le t}{\sup} \me(\tau) + \int_{0}^{t} \md(\tau)  d\tau \le C \Big(
        \underset{0\le \tau \le t}{\sup} \widetilde{\me}(\tau) + \int_{0}^{t} \md(\tau)  d\tau \Big)\le C \widetilde{\me}(0) \le C \me(0),
        \deqq
        where the constant C is independent of time.
        Choose the small constant $\delta:= 2 C\me(0) \le 1$, then we have
        \beqq
        \underset{0\le \tau \le t}{\sup} \me(\tau) + \int_{0}^{t} \md(\tau)  d\tau \le C \me(0) = \f{\delta}{2},
        \deqq
        which implies the estimate \eqref{close_assumption}. Then we finish the proof of Proposition \ref{main_pro}.

   \section{Proof of Theorem \ref{main_result}}\label{sec-theorem}     
			Suppose the assumptions in Theorem \ref{main_result} hold,
			similar to the result in \cite{Matsumura-Nishida1980}, 
			one can establish the local-in-time well-posedness for the equation \eqref{eqr}.
			Next, we use the standard continuity argument to show the global well-posedness.
			From the local existence result
			and smallness assumption of initial condition,
			it holds
			\beqq
			\underset{0\le \tau \le t}{\sup}\mathcal{E}(\tau)
			+\int_0^t \md(\tau)d\tau \le \delta,
			\deqq
			for all $t\in [0, T_0)$ and $\delta=2C\me(0)$,
			where $C$ is a positive constant independent of time $t$.
			Set
			\beq\label{criterion}
            \begin{aligned}
			T_1:=\underset{T_0}{\sup}
			&\left\{T_0~|
			\underset{0\le \tau \le t}{\sup}\mathcal{E}(\tau)
			+\!\int_0^t \! \md(\tau)d\tau \le \delta,
			\quad \forall ~ t\in [0, T_0)\right\},
            \end{aligned}
			\deq
			we claim that $T_1=+\infty$. Otherwise, applying the estimate
			\eqref{close_assumption}
			and the local-in-time existence result,
			there exists a positive constant $T_2$ such that $T_2>T_1$,
			it holds that for any $T\in [T_1, T_2)$,
			\beqq
			\underset{0\le \tau \le t}{\sup}\mathcal{E}(\tau)
			+\int_0^t \md(\tau)d\tau \le \delta,
			\quad \forall ~ t\in [0, T).
			\deqq
			This contradicts the definition of $T_1$ in \eqref{criterion}.
			Therefore, we can deduce that $T_1=+\infty$.
			Therefore, we complete the proof of Theorem \ref{main_result}.			

\section*{Acknowledgments}
		Jincheng Gao was partially supported by the National Key Research and Development Program of China(2021YFA1002100), Guangdong Special Support Project (2023TQ07A961) and Guangzhou Science and Technology Program (2024A04J6410).
        Xianpeng Hu's research was partially supported by the RFS Grant and GRF Grants from the Research Grants Council (Nos. PolyU 11302021, 11310822 and 11302523).
       Lianyun Peng was partially supported by a fellowship award (PolyU RFS2122-1S05).
		Jiahong Wu was partially supported by the
		National Science Foundation of the United States (DMS 2104682, DMS 2309748).

		\section*{Data Availability}
		Data sharing is not applicable to this article as no new data were created or analysed in this study.

		\section*{Conflict of interest}
		The authors declared that they have no Conflict of interest to this work.
        
            	\phantomsection
		
\end{sloppypar}
\end{document}